\newcommand*{\email}[1]{\href{mailto:#1}{\nolinkurl{#1}} } 
\renewcommand{\paragraph}[1]{\textbf{#1.}}
\theoremstyle{plain}
\newtheorem{theorem}{Theorem}[section]
\title{Deep Picard Iteration for High-Dimensional Nonlinear PDEs}
\author{Jiequn Han\thanks{Authors contributed equally and are listed alphabetically. Center for Computational Mathematics, Flatiron Institute (\email{jiequnhan@gmail.com}).} 
\and Wei Hu\thanks{Institute for Advanced Algorithms Research, Shanghai (\email{weihu.math@gmail.com}).}
\and Jihao Long\thanks{Institute for Advanced Algorithms Research, Shanghai (\email{longjh1998@gmail.com})}.
\and Yue Zhao\thanks{Center for Data Science, Peking University (\email{yuezhao.math@gmail.com}).}}
\DeclareMathOperator{\diag}{diag}
\newcommand{\RR}{\mathbb{R}}
\newcommand{\BS}{\mathbb{S}}
\newcommand{\tr}{\mathrm{tr}}
\newcommand{\rmd}{\,\mathrm{d}}
\newcommand{\II}{\mathrm{I}}
\newcommand{\EE}{\mathbb{E}}
\newcommand{\transpose}{^\mathrm{T}}
\newtheorem{assumption}{Assumption}
\begin{document}

\maketitle

% REQUIRED
\begin{abstract}
We present the Deep Picard Iteration (DPI) method, a new deep learning approach for solving high-dimensional partial differential equations (PDEs). The core innovation of DPI lies in its use of Picard iteration to reformulate the typically complex training objectives of neural network-based PDE solutions into much simpler, standard regression tasks based on function values and gradients. This design not only greatly simplifies the optimization process but also offers the potential for further scalability through parallel data generation. Crucially, to fully realize the benefits of regressing on both function values and gradients in the DPI method, we address the issue of infinite variance in the estimators of gradients by incorporating a control variate, supported by our theoretical analysis. Our experiments on problems up to 100 dimensions demonstrate that DPI consistently outperforms existing state-of-the-art methods, with greater robustness to hyperparameters, particularly in challenging scenarios with long time horizons and strong nonlinearity. The code is available at \textcolor{blue}{\href{https://github.com/DeepOptimalControl/DeepPicardIteration}{https://github.com/DeepOptimalControl/DeepPicardIteration}}.
\end{abstract}

% REQUIRED
\noindent\textbf{Keywords:} High-dimensional PDE, Picard iteration, deep learning, variance reduction.

\section{Introduction}
This paper aims to solve high-dimensional nonlinear partial differential equations (PDEs) of the parabolic form:
\begin{equation}\label{eq:pde}
    \begin{dcases}
        \partial_t u+ F(t,x ,u,  \nabla_x u, \nabla_x^2u) = 0, \,\textrm{on } [0,T)\times\RR^d,\\
        u(T,x ) = g(x), \,\textrm{on } \RR^d,
    \end{dcases}
\end{equation}
where the dimension $d \in \mathbb{N}^+$, time horizon $T > 0$, the nonlinearity $F:[0,T]\times \RR^d \times \RR \times \RR^d \times \BS^d \rightarrow \RR$ ($\BS^d$ is the set of symmetric $d\times d$ matrices) and the terminal condition $g:\RR^d \rightarrow \RR$. 
We assume the PDE is well-posed; see, e.g., \cite{lunardi2012analytic} for the well-established results on the well-posedness of such PDEs. These high-dimensional PDEs have wide applications across various disciplines, including optimal control, portfolio optimization, economics, and probabilistic modeling, among others (see, e.g., \cite{yong2012stochastic, achdou2022income}), and thus require efficient numerical algorithms. However, their numerical treatment presents formidable challenges, as classical mesh-based methods are severely constrained by the notorious curse of dimensionality.

In response to the curse of dimensionality, \cite{han2016deep} introduced the first deep learning-based algorithm for high-dimensional scientific computing problems, with a focus on stochastic control problems, which are closely related to Hamilton-Jacobi-Bellman PDEs. Shortly after, for the general case of semilinear PDEs where $F$ in~\eqref{eq:pde} is linear in $\nabla_x^2u$, the works \cite{han2017deep, han2018solving} pioneered the Deep BSDE method, marking a revolutionary use of modern machine learning methodologies to solve high-dimensional PDEs. 
This approach reformulates semilinear PDEs as backward stochastic differential equations (BSDEs)~\citep{pardoux1992backward} and solves a variational problem by discretizing BSDEs in time and approximating the solution using deep neural networks. Since its introduction, the Deep BSDE method and related methods (e.g., \citep{chan2019machine, han2020convergence, hure2020deep, han2020deep, ruthotto2020machine,zhou2021actor, beck2021deep, nusken2021interpolating, germain2022approximation,zhang2022fbsde,chassagneux2023learning}) have significantly expanded the potential for solving high-dimensional PDEs. However, these methods still face performance limitations when dealing with challenging problems characterized by strong nonlinearity, leading to the high nonconvexity inherent in the optimization problems these algorithms solve. Similar issues in optimization also affect other deep-learning-based methods for PDEs, such as the Deep Galerkin method \citep{sirignano2018dgm} and the physics-informed neural networks (PINN) method \citep{raissi2019physics}, both of which directly use the squared residuals of the PDEs as the loss function. 

Fully nonlinear PDEs present even greater challenges compared to semilinear PDEs due to the additional nonlinearity in the second-order terms, and there is notably less literature available on solving high-dimensional fully nonlinear PDEs. Some noteworthy approaches to tackle such PDEs include: (1) physics-informed neural network (PINN) method, which can be directly applied to fully nonlinear PDEs but suffer from similar optimization challenges; (2) methods based on the second-order backward stochastic differential equations (2BSDEs) representation for fully nonlinear PDEs~\cite{cheridito2007second}, as explored by \cite{beck2019machine, pham2021neural};
and (3) the method proposed by \cite{nguwi2024deep}, which represents the solution to fully nonlinear PDEs through a branching process and uses Monte Carlo sampling to generate labels for training neural networks with a least-square loss. However, the variance of Monte Carlo sampling increases dramatically as the time horizon grows, limiting its applicability to problems with short time horizons.

Picard iteration is a fundamental and powerful method in both theoretical and numerical analysis of differential equations. It constructs a sequence of increasingly accurate approximations of solutions by substituting an initial guess into a fixed-point form of the original differential equations. Combined with multi-level Monte Carlo integration, \citep{hutzenthaler2019multilevel,hutzenthaler2020multilevel,hutzenthaler2021multilevel} demonstrate that the multi-level Picard iteration method can solve semilinear PDEs at specific points without the curse of dimensionality. However, in practice, rather than obtaining the solution at a single point, it is often more desirable to obtain the solution as a function across a domain of interest. \cite{chassagneux2023learning} attempts to combine the ideas of Picard iteration and linear-quadratic optimization to find such a solution for semilinear PDEs within a finite-dimensional linear space. However, its applicability to high-dimensional problems is heavily constrained by the representational limitations of the linear space, and the methodology does not generalize well to fully nonlinear problems.

In this study, we present a novel deep learning approach called the \textit{Deep Picard Iteration (DPI)} method, designed to fully realize the potential of Picard iteration when combined with the powerful approximation capabilities of deep neural networks. The DPI method is applicable to both semilinear and fully nonlinear PDEs, offering a robust solution for these problems. By leveraging Picard iteration, our method reframes the optimization challenges inherent in neural network approximation of PDE solutions to standard regression problems involving function values and gradients. This reformulation underpins the enhanced capability of our method to handle difficult PDEs more effectively than alternative approaches. The effectiveness of regression-based formulations over other objective functions in optimization has also been demonstrated in recent studies on solving optimal feedback control problems~\cite{nakamura2021adaptive,azmi2021optimal,zang2022machine,zhao2024offline}, further reinforcing the advantages of our approach. To obtain labels at each step of the Picard iteration, we utilize both the Feynman-Kac formula for function values and the Bismut-Elworthy-Li formula for gradients. Direct application of the Bismut-Elworthy-Li can lead to issues with infinite variance in the estimators. We provide a theoretical analysis of this problem and demonstrate that a simple control variate can reduce the variance to a finite level. Numerical experiments demonstrate that DPI outperforms existing state-of-the-art methods, showing superior results on both semilinear and fully nonlinear PDEs. Moreover, compared to other methods, DPI exhibits greater robustness to hyperparameters and strong capacity for parallelization, making it well-suited for solving large-scale problems.

This paper is organized as follows. Section 2 provides the background on the Feynman-Kac formula for linear PDEs. Section 3 introduces the concept of Deep Picard Iteration with gradient-augmented regression at an abstract level, including a rigorous analysis of the variance of the gradient estimator providing regression labels. Section 4 details the numerical algorithm while Section 5 presents the numerical results. Finally, Section 6 concludes the paper with a discussion of future work.

\section{Background}
In this section, we briefly review the classical Feynman-Kac formula for linear PDE
\begin{equation}\label{eq:pde_linear}
    \begin{dcases}
        \partial_t u+ \mu(t, x) \cdot \nabla_x u + \frac{1}{2} \text{tr}(\sigma \sigma^\top(t, x) \nabla_{x}^2 u) + f(t, x) = 0, \,\textrm{on } [0,T)\times\RR^d,\\
        u(T,x ) = g(x), \,\textrm{on } \RR^d,
    \end{dcases}
\end{equation}
where \(\mu: [0, T] \times \mathbb{R}^d \to \mathbb{R}^d\), \(\sigma = (\sigma_1,\dots,\sigma_d): [0, T] \times \mathbb{R}^d \to \mathbb{R}^{d \times d}\), and $ \text{tr}(\cdot)$ denotes the trace operator. This PDE can be viewed as a special case of \eqref{eq:pde} in which $F$ is linear with respect to $\nabla_x u$ and $\nabla_x^2 u$. 
The Feynman–Kac formula allows us to express $u(t,x)$ as a conditional expectation under a probability measure.
To be specific,
let \((\Omega, \mathbb{F}, \{\mathcal{F}_t\}_{0 \leq t \leq T}, \mathbb{P})\) be a filtered probability space equipped with a \(d\)-dimensional standard Brownian motion \(\{W_t = (W_t^1,\dots,W_t^d)\transpose\}_{0 \leq t \leq T}\) starting from \(0\). 
Given the probability space, we introduce the forward stochastic differential equations (SDEs):
\begin{equation}
\label{eq:forward_SDE_general}
    X_s^{t,x} = x + \int_{t}^s \mu(r, X_r^{t,x}) \rmd r + \int_{t}^s \sigma(r, X_r^{t,x}) \rmd W_r, \quad s \in [t,T],
\end{equation}
where \((t, x) \in [0, T] \times \mathbb{R}^d\).
Throughout the paper, we assume the following standard assumption regarding $\mu$ and $\sigma$ holds.
\begin{assumption}
\label{assump1}
\(\mu\) and \(\sigma\) are continuously differentiable in both \(t\) and \(x\). $\nabla_x \mu$ and $\{\nabla_x \sigma_j\}_{j=1}^d$ are bounded continuous functions in $[0,T]\times \RR^d \to \RR^{d\times d}$, $\mu(t,0)$ and $\sigma(t,0)$ are bounded for $t \in [0,T]$. Furthermore, $\sigma$ satisfies that\footnote{Throughout this work, we will use $|\cdot|$ to denote the Euclidean norm in $\RR^d$.}
\[
     m |y|^2\le y\transpose(\sigma\sigma\transpose)(t,x) y\le M |y|^2,\,\forall x,y \in \RR^d \text{ and }t \in [0,T] 
\]
for some constant $0 < m \le M < +\infty$. 
\end{assumption}
Then the Feynman-Kac formula~\cite{kac1949distributions,oksendal2013stochastic} states that
\begin{equation}
\label{eq:FeynmanKac_semi}
    u(t,x) = \EE[g(X_T^{t,x})] + \int_{t}^T \EE[f(s,X_s^{t,x})]\rmd s.
\end{equation}
This formula offers a probabilistic approach to evaluate the PDE solution at any given time-space point $(t,x)$. To achieve this, one can numerically simulate multiple paths of \eqref{eq:forward_SDE_general} and approximate the expectation in \eqref{eq:FeynmanKac_semi} using Monte Carlo integration.
Unlike classical mesh-based methods, this approach does not require spatial discretization. Moreover, the convergence rate of Monte Carlo integration scales inversely with the square root of the number of samples, making it independent of the spatial dimension. This is the key reason why this method can overcome the curse of dimensionality in high-dimensional PDEs; see \citep{hutzenthaler2019multilevel,hutzenthaler2020multilevel,hutzenthaler2021multilevel} for generalizations to semilinear PDEs. Additionally, if one seeks to obtain the solution across a time-space domain of interest rather than a single point, the Feynman-Kac formula provides an efficient way to generate solution labels at various time-space points, enabling a regression task using different function approximators such as sparse grids, kernel methods, or neural networks~\cite{beck2021solving}.

\section{Deep Picard Iteration with Gradient-Augmented Regression}
\label{sec:dpi}
In this paper we aim to extend the power of the above method from the linear PDE to the fully nonlinear case and obtain the solution in  function form. A natural idea is to conduct an iteration, viewing the fully nonlinear PDE as a linear PDE given the current estimate of $\nabla_x u$ and $\nabla_x^2 u$. To be more specific, let
\begin{equation}
\label{eq:rewritten_nonlinear_f}
f(t, x, y, z, \gamma) = F(t,x,y,z,\gamma)-\mu(t, x) \cdot z - \frac{1}{2} \tr(\sigma \sigma^\top(t, x) \gamma),
\end{equation}
and define
\begin{equation*}
    f_u(t,x) \coloneqq f(t,x,u(t,x),\nabla_x u(t,x),\nabla^2_x u(t,x)).
\end{equation*}
The PDE~\eqref{eq:pde} can thus be rewritten in the form of PDE~\eqref{eq:pde_linear}, with $f$ replaced by $f_u$. Consequently, if $u$ is a smooth solution of PDE~\eqref{eq:pde}, it follows that $u$ is also a solution of PDE~\eqref{eq:pde_linear}, with $f$ replaced by $f_u$. Therefore, the Feynman-Kac formula \eqref{eq:FeynmanKac_semi} gives:
\begin{equation}
\label{eq:FeynmanKac}
    u(t,x) = \EE[g(X_T^{t,x})] + \int_{t}^T \EE[f_u(s,X_s^{t,x})]\rmd s.
\end{equation}

We view it as a fixed-point equation for $u$ and define the corresponding \textit{Picard iteration equation}
\begin{equation}
\label{eq:Picard}
    u_{k+1}(t,x) = \EE[g(X_T^{t,x})] + \int_{t}^T \EE[f_{u_k}(s,X_s^{t,x})]\rmd s,
\end{equation}
starting from $u_0(t,x) \equiv 0$.

Note that for the linear PDE~\eqref{eq:pde_linear}, the drift function $\mu$ and diffusion function $\sigma$ in \eqref{eq:forward_SDE_general} are uniquely determined by the PDE itself. However, this is not the case for fully nonlinear PDEs, where different choices for $\mu$ and $\sigma$ are possible, and the function $f$ in \eqref{eq:rewritten_nonlinear_f} can be defined accordingly. Further discussion on selecting these functions will be provided in Section~\ref{sec:alg} after~\eqref{eq:forward_SDE_x0}. Theoretically, when the PDE is semilinear, it is natural to select $\mu$ and $\sigma$ such that $f$ in \eqref{eq:rewritten_nonlinear_f} does not depend on $\gamma$ denoting $\nabla_x^2u$. With this choice and assuming that $f$ is globally Lipschitz continuous, \cite[Theorem 1.1]{hutzenthaler2021speed} demonstrate that the convergence rate of Picard iteration~\eqref{eq:Picard} is at least exponentially fast, with the error decaying as $c^k/{\sqrt{k!}}$. However, in the case of fully nonlinear PDEs, determining the conditions on $\mu$, $\sigma$, and $f$ that ensure the convergence of Picard iterations remains an open question.
    
Even without a theoretical guarantee of convergence for fully nonlinear cases, the Picard iteration defined in \eqref{eq:Picard} still offers a natural starting point for approximating the PDE solution with neural networks through a series of simpler tasks. Given the current approximation to $u_k(t,x)$, we view the right-hand side of \eqref{eq:Picard} as a way to generate samples of $u_{k+1}(t,x)$ at specific $(t, x)$ and then create a dataset of such samples for learning $u_{k+1}(t,x)$ through least-squares regression. 
Note that in order to generate samples through the right-hand side of \eqref{eq:Picard}, we need to evaluate $f_{u_k}$, which involves both the gradient term $\nabla_x u_k$ and the Hessian term $\nabla_x^2 u_k$. We compute these terms via automatic differentiation.

In regression, it is widely observed that incorporating gradient of the target function as additional labels can improve learning results~\cite{laurent2019overview,adcock2019compressive,azmi2021optimal,nakamura2021adaptive,o2024derivative}. We seek to realize a similar benefit in our scheme. To this end, we recall the Bismut-Elworthy-Li formula~\citep{bismut1984atiyah,ELWORTHY1994252,DaPrato1997,ma2002representation}, which gives $\nabla_x u$ through another stochastic representation with the similar spirit to Feynman-Kac formula: 
\begin{equation}
    \begin{aligned}\label{eq:B-E-L_formula_original}
             \nabla_x u(t,x) =~&\EE[g(X_T^{t,x}) H_T^{t,x}] +\int_{t}^T \EE [f_{u}(s,X_s^{t,x}) H_s^{t,x}]\rmd s,
    \end{aligned}
\end{equation}
where 
\begin{equation}
H_s^{t,x} \coloneqq (s-t)^{-1}\int_{t}^s [\sigma(r,X_r^{t,x})^{-1} D_r^{t,x}]\transpose \rmd W_r
\end{equation}
is called the Malliavin weight, and $D_s^{t,x} \in \mathbb{R}^{d\times d}$ is called the variational process (or Jacobian process) with respect to the forward process~\eqref{eq:forward_SDE_general}
\begin{equation}\label{eq:variation_process}
    D_s^{t,x} \coloneqq \II_d + \int_t^s \nabla_x \mu(r,X_r^{t,x}) D_r^{t,x} \mathrm{d}r + \sum_{j=1}^d \int_t^s \nabla_x \sigma_j(r,X_r^{t,x}) D_r^{t,x} \mathrm{d}W_r^j.
\end{equation}
Here $\II_d\in \mathbb{R}^{d\times d}$ denotes the identity matrix. Both $H_s^{t,x}$ and $D_s^{t,x}$ are defined for $s > t$ to measure the sensitivity of the solution with respect to the initial condition.
Given this formula, a natural idea is to again utilize the Monte-Carlo method to approximate the expectation in~\eqref{eq:B-E-L_formula_original} to generate labels on the gradients. However, the direct application of this formula does not work numerically since the corresponding Monte Carlo estimator will suffer from the \textit{infinite variance}, as shown in the theorem below. Note that such infinite variance phenomenon has also been observed in other similar contexts related to Malliavin calculus~\cite{kawai2010computation,andersson2017unbiased,henry2017unbiased}.
\begin{theorem}
\label{thm:infinite_var}
    Assume Assumption~\ref{assump1} holds. Given a fixed $t \in [0,T)$ and $x \in \mathbb{R}^d$, assume that $g(x) \in C^1(\RR^d)$ with $g(x) \neq 0$, and $f(t,x) \in C^1([0,T] \times \RR^d)$ with $f(t,x) \neq 0$, where both functions have bounded first-order derivatives. We have
    \begin{align*}
    \lim_{s \rightarrow T^-}\EE|g(X_T^{t,x}) H_T^{t,x}|^2 = +\infty,\, 
    \int_{t}^T \EE |f(s,X_s^{t,x}) H_s^{t,x}|^2\rmd s = +\infty.
    \end{align*}
\end{theorem}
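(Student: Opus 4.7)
The strategy is to isolate the leading singular contribution in each expression at the endpoint where the singularity lives ($s\to T^-$ in the first claim, $s\to t^+$ in the integrand of the second), and to bound the remainder tightly enough that it cannot cancel the blow-up.

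Set $M_s^{t,x} := \int_t^s [\sigma(r, X_r^{t,x})^{-1} D_r^{t,x}]\transpose \rmd W_r$. By It\^o's isometry, $\EE|M_s^{t,x}|^2 = \EE\int_t^s \tr\bigl((D_r^{t,x})\transpose (\sigma\sigma\transpose)^{-1}(r,X_r^{t,x}) D_r^{t,x}\bigr) \rmd r$. Under Assumption~\ref{assump1}, standard SDE estimates give $L^p$-continuity of $r\mapsto D_r^{t,x}$ with $D_t^{t,x}=\II_d$, so the integrand is continuous in $r$ at $r=t$ with limiting value $\tr((\sigma\sigma\transpose)^{-1}(t,x))$, which lies in $[d/M,d/m]$ and is strictly positive. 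Dominated convergence then yields $\lim_{s\to t^+} \EE|M_s^{t,x}|^2/(s-t) = \tr((\sigma\sigma\transpose)^{-1}(t,x)) > 0$, so $\EE|M_s^{t,x}|^2 = \Theta(s-t)$. A Burkholder-Davis-Gundy estimate together with Assumption~\ref{assump1} similarly gives $\EE|M_s^{t,x}|^4 = O((s-t)^2)$, while classical moment bounds yield $\EE|X_s^{t,x}-x|^p = O((s-t)^{p/2})$ for $p\in\{2,4\}$ with $\sup_{s\in[t,T]}\EE|X_s^{t,x}|^p < \infty$.

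For the first claim I decompose $g(X_T^{s,x})^2 = g(x)^2 + R_g(s)$ with $R_g(s) := g(X_T^{s,x})^2 - g(x)^2$. Boundedness of $\nabla g$ yields $|R_g(s)| \leq C(1 + |X_T^{s,x}|)|X_T^{s,x}-x|$, whence $\EE[R_g(s)^2] = O(T-s)$. Splitting accordingly, $\EE\bigl|g(X_T^{s,x}) M_T^{s,x}/(T-s)\bigr|^2 = g(x)^2 \EE|M_T^{s,x}|^2/(T-s)^2 + \EE[R_g(s)\,|M_T^{s,x}|^2]/(T-s)^2$. The principal term is asymptotic to $g(x)^2\tr((\sigma\sigma\transpose)^{-1}(T,x))/(T-s)$, while Cauchy-Schwarz bounds the remainder in absolute value by $(\EE R_g(s)^2)^{1/2}(\EE|M_T^{s,x}|^4)^{1/2}/(T-s)^2 = O((T-s)^{-1/2})$. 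Since $g(x)\neq 0$, the principal term dominates and the limit is $+\infty$.

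For the second claim the identical decomposition applied to $f(s,X_s^{t,x})^2 - f(t,x)^2$, using boundedness of $\partial_t f$ and $\nabla_x f$, shows the integrand equals $f(t,x)^2 \tr((\sigma\sigma\transpose)^{-1}(t,x))/(s-t) + O((s-t)^{-1/2})$ as $s\to t^+$; the leading term has non-integrable singularity $1/(s-t)$ at $s=t$ while the remainder is integrable, so the integral equals $+\infty$. The main obstacle I anticipate is the first step: carefully verifying $\EE|M_s^{t,x}|^2 \sim c(s-t)$ with a strictly positive constant $c$, which reduces to an $L^p$-continuity statement for the variational process $D_r^{t,x}$ at $r=t$ under Assumption~\ref{assump1}; once this is in hand, everything else is bookkeeping with BDG and Gronwall-type moment estimates.
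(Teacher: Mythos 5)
Your proposal is correct, but it follows a somewhat different route than the paper. The paper first proves the finite-variance theorem for the control-variate estimator (Theorem~\ref{thm:finite_var}) and then derives the blow-up by the elementary inequality $a^2+(a-b)^2\ge b^2/2$, splitting $g(X_T^{s,x})M$ into $g(x)M$ and $(g(X_T^{s,x})-g(x))M$ (with $M$ the stochastic integral): the second piece is bounded by the already-established theorem, and the frozen piece $|g(x)|^2\,\EE|M|^2/(T-s)^2$ is shown to blow up like $1/(T-s)$ via the ellipticity bound and $\EE\,\tr((D_r^{s,x})\transpose D_r^{s,x})\ge d/2$ for $r-s$ small. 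You instead decompose the squared coefficient, $g(X_T^{s,x})^2=g(x)^2+R_g$, and control the cross term $\EE[R_g|M|^2]/(T-s)^2$ directly by Cauchy--Schwarz with $\EE R_g^2=O(T-s)$ and the BDG fourth-moment bound $\EE|M|^4=O((T-s)^2)$, obtaining a remainder of order $(T-s)^{-1/2}$; the same bookkeeping handles the $f$-term, whose leading singularity $f(t,x)^2\,c/(s-t)$ is non-integrable while the remainder is integrable. Both arguments hinge on the identical leading term (the nonvanishing frozen value times a martingale whose second moment is $\Theta$ of the elapsed time, divided by the square of the elapsed time) and the same toolkit (It\^o isometry/BDG, uniform ellipticity, and $L^p$-continuity of the variational process at the initial time, which the paper quantifies via \cite[Theorem 5.2.2]{zhang2017backward}); what your version buys is a self-contained proof that does not invoke the control-variate finite-variance theorem, together with a slightly more quantitative statement of the blow-up rate, at the cost of an extra fourth-moment estimate. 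Two small points to tighten: the continuity estimate for $D_r^{s,x}$ at $r=s$ must have constants uniform in the initial time $s$ (it does, via the standard estimate $\EE|D_r^{s,x}-\II_d|^2\le C(r-s)$ with $C$ independent of $s$), and the dominated-convergence identification of the exact limit is unnecessary --- the uniform lower bound $\tr((\sigma\sigma\transpose)^{-1})\ge d/M$ together with $\EE\,\tr((D_r^{s,x})\transpose D_r^{s,x})\ge d/2$ for $r-s$ small already suffices for the divergence.
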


For clarity, we defer the proof of  Theorem~\ref{thm:infinite_var} until after we identify the finite part of the variance. To resolve this fundamental issue of infinite variance and facilitate the Monte-Carlo approximation to the gradient, the key observation is that we can use simple control variates from $g(x)$ and $f(t,x)$ to reduce the variance to a finite value, thanks to the martingale property of Brownian motion. Notably, we have
\begin{align*}
    \EE[g(X_T^{t,x})H_T^{t,x}]   = \EE[(g(X_T^{t,x})-g(x)) H_T^{t,x}],
\end{align*}
and
\begin{align*}
    \int_{t}^T \EE [f_{u}(s,X_s^{t,x})H_s^{t,x}]\rmd s  = \int_{t}^T \EE [(f_{u}(s,X_s^{t,x})- f_u(t,x))H_s^{t,x}]\rmd s,
\end{align*}
which gives
\begin{equation}
    \begin{aligned}\label{eq:B-E-L_formula}
             \nabla_x u(t,x) =\EE[(g(X_T^{t,x})-g(x)) H_T^{t,x}] + \int_{t}^T \EE [(f_{u}(s,X_s^{t,x})- f_u(t,x))H_s^{t,x}]\rmd s.
    \end{aligned}
\end{equation}
This type of control variate dates back to~\cite{alanko2013reducing} for one-step computation of conditional expectations in the numerical solution of BSDEs, and has since appeared in various forms in~\cite{warin2018monte,chan2019machine,henry2019branching,germain2021neural} for variance reduction in stochastic simulations related to PDEs. We state its form above in our continuous-time setting based on the Feynman-Kac formulation. The theorem below shows that the estimator with the control variate has finite variance, and we provide the proof in this setting for completeness.

\begin{theorem}
\label{thm:finite_var}
    Assume Assumption~\ref{assump1} holds. For any $t \in [0,T)$, $x \in \RR^d$, $g \in C^1(\RR^d)$ and $f \in C^1([0,T]\times \RR^d)$ with bounded first-order derivatives, we have
    \begin{align*}
    \sup_{s \in [t,T)}\EE|(g(X_T^{t,x})-g(x)) H_T^{t,x}|^2 < +\infty,\, 
    \int_{t}^T \EE |(f(s,X_s^{t,x})- f(t,x))H_s^{t,x}|^2\rmd s < +\infty.
    \end{align*}
    \end{theorem}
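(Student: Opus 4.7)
The strategy is to exploit the newly introduced control variate to turn the Lipschitz property of $g$ and $f$ into a factor that exactly cancels the singular denominators $1/(T-s)$ and $1/(s-t)$. The plan is to bound the square of the product by Cauchy--Schwarz, controlling the ``increment'' factor using moment estimates for the forward SDE \eqref{eq:forward_SDE_general} and the stochastic-integral factor using the Burkholder--Davis--Gundy (BDG) inequality together with moment estimates for the variational process \eqref{eq:variation_process}.

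First, I would invoke the bounded-derivative hypothesis on $g$ and $f$ to obtain the Lipschitz bounds
\[
|g(X_T^{s,x}) - g(x)| \le C\,|X_T^{s,x} - x|, \qquad |f(s,X_s^{t,x}) - f(t,x)| \le C\bigl(|s-t| + |X_s^{t,x} - x|\bigr).
\]
Under Assumption~\ref{assump1} ($\mu,\sigma$ Lipschitz in $x$ with $\mu(t,0),\sigma(t,0)$ bounded), the standard $L^4$ moment estimate for It\^o SDEs gives $\EE|X_s^{t,x}-x|^4 \le C(s-t)^2$. Hence
\[
\EE\bigg[\frac{|g(X_T^{s,x})-g(x)|^4}{(T-s)^4}\bigg] \le \frac{C}{(T-s)^2}, \qquad \EE\bigg[\frac{|f(s,X_s^{t,x})-f(t,x)|^4}{(s-t)^4}\bigg] \le \frac{C}{(s-t)^2}.
\]
So at the level of the fourth power the control-variate subtraction replaces the previously catastrophic singularity by one of order $(T-s)^{-2}$ (resp.\ $(s-t)^{-2}$).

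Second, I would bound the stochastic-integral factor. Since $\sigma\sigma\transpose \succeq m\,\II_d$ makes $\sigma^{-1}$ uniformly bounded, and since $D_r^{t,x}$ solves the linear SDE \eqref{eq:variation_process} with bounded coefficients $\nabla_x\mu$ and $\nabla_x\sigma_j$, a Gr\"onwall argument in $L^p$ yields $\sup_{r\in[t,T]}\EE|D_r^{t,x}|^p < +\infty$ for any $p\ge 1$. The BDG inequality then gives
\[
\EE\bigg|\int_{s}^{T}[\sigma(r,X_r^{s,x})^{-1}D_r^{s,x}]\transpose\rmd W_r\bigg|^4 \le C\,\EE\bigg[\int_{s}^{T}|\sigma^{-1}D_r^{s,x}|^2\rmd r\bigg]^2 \le C(T-s)^2,
\]
and the analogous bound with $C(s-t)^2$ on the $[t,s]$ integral. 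A single application of Cauchy--Schwarz now combines the two estimates:
\[
\EE\bigg|\frac{g(X_T^{s,x})-g(x)}{T-s}\int_s^T[\sigma^{-1}D]\transpose\rmd W_r\bigg|^2 \le \sqrt{\tfrac{C}{(T-s)^2}\cdot C(T-s)^2} = C,
\]
uniform in $s\in[t,T)$, which proves the first claim. The identical calculation shows the $f$-integrand is bounded uniformly in $s\in(t,T]$, so integration over $[t,T]$ yields the second claim.

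The main obstacle is the $L^p$ moment control on the variational process $D_r^{t,x}$: the SDE \eqref{eq:variation_process} has multiplicative (stochastic-integral) coefficients, so one has to square the equation, apply BDG componentwise, and close a Gr\"onwall loop using the boundedness of $\nabla_x\mu,\nabla_x\sigma_j$. Everything else is routine Cauchy--Schwarz and standard SDE moment theory. The mechanism behind the theorem is transparent once these ingredients are in place: each control-variate difference contributes a $\sqrt{T-s}$ (resp.\ $\sqrt{s-t}$) in $L^2$, the It\^o integral contributes another, and together they cancel the $1/(T-s)$ (resp.\ $1/(s-t)$) factor exactly, which is precisely what fails in the bare Bismut--Elworthy--Li formula of Theorem~\ref{thm:infinite_var}.
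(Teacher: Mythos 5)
Your proposal is correct and follows essentially the same route as the paper's proof: a Cauchy--Schwarz split into fourth moments, Lipschitz control of the increments $g(X_T^{s,x})-g(x)$ and $f(s,X_s^{t,x})-f(t,x)$ via the bounded first derivatives, the standard $L^4$ estimate $\EE|X_s^{t,x}-x|^4\le C(s-t)^2$, uniform $L^p$ moment bounds for the variational process $D$, and the Burkholder--Davis--Gundy inequality combined with the ellipticity bound on $\sigma$ to get the compensating factor $(T-s)^2$ (resp.\ $(s-t)^2$). The only cosmetic differences are that the paper phrases the Lipschitz step through the mean value theorem and cites textbook SDE estimates where you sketch the Gr\"onwall argument.
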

    
    \begin{proof}
    Throughout the proof, we will use $C$ as a positive constant, which is independent of $t, s$ and $x$ and may vary from line to line.
    First, the Cauchy-Schwarz inequality gives us
    \begin{equation}\label{thm:eq1}
        \begin{aligned}
        &\EE|(g(X_T^{t,x})-g(x)) H_T^{t,x}|^2 \le \left(\EE|g(X_T)^{t,x} -g(x)|^4\right)^{\frac{1}{2}} \left(\EE|H_T^{t,x}|^4\right)^{\frac{1}{2}} .
    \end{aligned}
    \end{equation}
    By the mean value theorem, there exists $\eta \in [0,1]$ such that $g(X_T^{s,x}) - g(x) = \nabla_x g(\eta x + (1-\eta)X_T^{s,x})\cdot(X_T^{s,x} - x)$. Then, noticing that $\nabla_x g$ is bounded, we have
    \begin{equation*}
        \EE\left|g(X_T^{s,x}) - g(x)\right|^4 \le C\EE|X_T^{s,x} - x|^4.
    \end{equation*}
    Through the standard estimate of the forward SDE~\eqref{eq:forward_SDE_general} (see, e.g., \cite[Theorem 3.4.3]{zhang2017backward}), we have
    \begin{equation}\label{thm:eq2}
        \begin{aligned}
        &\EE\left|g(X_T^{s,x}) - g(x)\right|^4
        \le  C\EE|X_T^{s,x} - x|^4 \\
        \le &C \left(\EE\bigg[\int_s^T |\mu(r,0)| \rmd r\bigg]^4 + \EE\bigg[\int_s^T \tr(\sigma\sigma\transpose(r,0))\rmd r\bigg]^2\right)\\
        \le &C(T^2(T-s)^2 + (T-s)^2)
        \le C(T-s)^{2}.
    \end{aligned}
    \end{equation}
    Similarly, with \cite[Theorem 3.4.3]{zhang2017backward} again, we have
    $$
    \EE[\tr((D_r^{s,x})\transpose D_r^{s,x})]^2 \le C.
    $$ Therefore, the Burkholder-Davis-Gundy inequality~\cite[Theorem 2.4.1]{zhang2017backward} gives us
    \begin{equation}\label{thm:eq3}
        \begin{aligned}
        &\EE|H_T^{t,x}|^4 = (T-t)^{-4}
        \EE\bigg|\int_{s}^{T}[\sigma(r,X_r^{s,x})^{-1}D_r^{s,x}]\transpose\rmd W_r\bigg|^4\\ \le
        & C(T-t)^{-4} \EE \bigg[ \int_{s}^T \tr([\sigma^{-1}(\sigma^{-1})\transpose](r,X_r^{s,x}) (D_r^{s,x})\transpose D_r^{s,x}) \rmd r\bigg]^2 \\
        \le& C(T-t)^{-4}\EE \bigg[\int_{s}^T \tr((D_r^{s,x})\transpose D_r^{s,x})\rmd r\bigg]^2 \\
        \le& C(T-s)^{-2},
    \end{aligned}
    \end{equation}
    where we have used that
    \begin{equation}
        \begin{aligned}\label{thm:eq4}
        \tr([\sigma^{-1}(\sigma^{-1})\transpose](r,X_r^{s,x}) (D_r^{s,x})\transpose D_r^{s,x}) =& \tr(D_r^{s,x}[\sigma^{-1}(\sigma^{-1})\transpose](r,X_r^{s,x}) (D_r^{s,x})\transpose ) \\
        =& \sum_{i=1}^d D_r^{s,x,i}[\sigma^{-1}(\sigma^{-1})\transpose](r,X_r^{s,x}) (D_r^{s,x,i})\transpose \\
        \le& C\sum_{i=1}^d D_r^{s,x,i}(D_r^{s,x,i})\transpose \\
        =& C\,\tr((D_r^{s,x})\transpose D_r^{s,x}),
    \end{aligned}
    \end{equation}
    in which $D_r^{s,x,i}$ is the $i$-th row of $D_r^{s,x}$. Combining \eqref{thm:eq1} \eqref{thm:eq2} and \eqref{thm:eq3}, we obtain 
    $$\EE|(g(X_T^{t,x})-g(x)) H_T^{t,x}|^2 \le C.
    $$
    Similarly, we can prove that 
    \[
     \EE |(f(s,X_s^{t,x})- f(t,x))H_s^{t,x}|^2\le C.
    \]
    Hence,
\begin{align*}
       \int_{t}^T  \EE |(f(s,X_s^{t,x})- f(t,x))H_s^{t,x}|^2\rmd s \le C.
\end{align*}
\end{proof}

Now we return to explain why the original estimator has infinite variance.
\begin{proof}[Proof of Theorem~\ref{thm:infinite_var}]
    By the elementary inequality $a^2 + (a-b)^2 \geq  b^2/2$, we have that
    \begin{align*}
        \EE|g(X_T^{t,x}) H_T^{t,x}|^2 &\ge \frac{1}{2}\EE|g(x) H_T^{t,x}|^2 - \EE|(g(X_T^{t,x})- g(x)) H_T^{t,x}|^2,\\
        \int_{t}^T\EE|f(s,X_s^{t,x}) H_s^{t,x}|^2\rmd s &\ge \frac{1}{2}\int_{t}^T\EE|f(t,x) H_s^{t,x}|^2\rmd s \\
        &- \int_{t}^T\EE|(f(s,X_s^{t,x}) - f(t,x)) H_s^{t,x}|^2\rmd s.
    \end{align*}
Therefore, given Theorem~\ref{thm:finite_var}, we only need to prove
$$
\lim_{s \rightarrow T^{-}}\EE|H_T^{s,x}|^2 = +\infty,\,
\int_{t}^T \EE |H_s^{t,x}|^2\rmd s = +\infty.
$$
First, similar to \eqref{thm:eq4}, we have
    $$\tr([\sigma^{-1}(\sigma^{-1})\transpose](r,X_r^{s,x}) (D_r^{s,x})\transpose D_r^{s,x}) \ge C\,\tr((D_r^{s,x})\transpose D_r^{s,x}).$$ 
Therefore,
    \begin{align*}
        \EE|H_T^{s,x}|^2
        =& (T-s)^{-2}\EE\bigg[\int_{s}^{T}\tr([\sigma^{-1}(\sigma^{-1})\transpose](r,X_r^{s,x}) (D_r^{s,x})\transpose D_r^{s,x})\rmd r\bigg]^2 \\
        \ge& C(T-s)^{-2}\EE \int_{s}^T \tr((D_r^{s,x})\transpose D_r^{s,x})\rmd r.
    \end{align*}
    With \cite[Theorem 5.2.2]{zhang2017backward}, we have $$
    \EE|\tr((D_r^{s,x})\transpose D_r^{s,x}) - \tr(I_d\transpose I_d)|\le C(r-s).$$
    Hence, when $r-s \le C$, we have
    $$
    \EE\tr((D_r^{s,x})\transpose D_r^{s,x}) \ge \frac{d}{2}.
    $$
    Therefore,
    \begin{align*}
        \EE|H_T^{s,x}|^2 
        \ge& C(T-s)^{-2} \EE \int_{s}^{\min\{T,s+C\}}\tr((D_r^{s,x})\transpose D_r^{s,x})\rmd r \\
        \ge& C(T-s)^{-2}\min\{T-s,C\},
    \end{align*}
which means that 
$$
\lim_{s \rightarrow T^{-}}\EE|H_T^{s,x}|^2 = +\infty.$$ 
Similarly, we have
$$
 \EE |H_s^{t,x}|^2 \ge C(s-t)^{-2}\min\{s-t,C\},
$$
which means that 
\begin{align*}
\int_{t}^T\EE |H_s^{t,x}|^2\rmd s = +\infty.
\end{align*}
\end{proof}

Building on the above analysis, we can now apply the control-variate version of Bismut-Elworthy-Li formula to the Picard iteration defined in~\eqref{eq:Picard}, yielding a similar relationship:
\begin{equation}
\begin{aligned}\label{eq:picard_B-E-L}
        \nabla_x u_{k+1}(t,x) =  \EE[(g(X_T^{t,x})-g(x)) H_T^{t,x}] + \int_{t}^T \EE [(f_{u_k}(s,X_s^{t,x})- f_{u_k}(t,x))H_s^{t,x}]\rmd s.
\end{aligned}
\end{equation}
Accordingly, we can plug the current approximation to $u_k$ into the right-hand side of~\eqref{eq:picard_B-E-L} to generate gradient labels of $u_{k+1}$ for better regression.

We should mention that the Bismut-Elworthy-Li formula can be extended to estimate the Hessian term. For instance, when $\mu \equiv 0$ and $\sigma \equiv \II_d$ in \eqref{eq:forward_SDE_general}, the formula for the second derivative becomes
    \begin{align*}
         \nabla_x^2 u_{k+1}(t,x) =\,  &\EE\bigg[g(X_T^{t,x})\frac{4(W_T - W_{\frac{T+t}{2}})(W_{\frac{T+t}{2}} - W_t)\transpose}{(T-t)^2}\bigg] \\
     &+\int_{t}^T\EE\bigg[f_{u_k}(s,X_s^{t,x})\frac{4(W_s - W_{\frac{s+t}{2}})(W_{\frac{s+t}{2}} - W_t)\transpose}{(s-t)^2}\bigg]\rmd s.
    \end{align*}
    Readers seeking a more general formulation are referred to Theorem 2.3 in \citep{ELWORTHY1994252}. In Section A of the supplementary material, we present a control-variate version of the above estimate with finite variance, like \eqref{eq:B-E-L_formula}, along with numerical results for the DPI using the Hessian estimator. Since numerical experiments thus far suggest that incorporating the Hessian term primarily enhances the accuracy of the Hessian itself rather than the value or gradient, we have opted to place this in the supplementary material.

\section{Numerical Algorithm}
\label{sec:alg}
To numerically implement the methodology introduced in Section~\ref{sec:dpi}, we replace each $u_k$ with $u_{\theta_k}$, a neural network with parameters $\theta_k$. When the context is clear, references to $u_k$ henceforth (including those used in earlier equations) should be understood as $u_{\theta_k}$ without further specification. Given $u_k$, we use equations \eqref{eq:Picard} and \eqref{eq:picard_B-E-L} to generate labels $(y_i, z_i)$ for $u$ and $\nabla u$ on sampled points $(t_i, x_i)$ with $f_{u_k}$ evaluated through automatic differentiation of $u_k$.
We then train $u_{k+1}$ on those labels through supervised learning using the following loss function:
\begin{equation}\label{loss_dpi}
    \mathcal{L}_{\text{DPI}}(\theta) = \frac{1}{N} \sum_{i=1}^N \left[|y_i - u_\theta(t_i, x_i)|^2 + \frac{\lambda}{d} |z_i - \nabla_x u_\theta(t_i, x_i)|^2\right],
\end{equation}
where $\lambda \ge 0$ balances the loss between the value and gradient terms. The overall procedure is summarized in Algorithm \ref{alg_dpi}, and several computational details involved in Algorithm \ref{alg_dpi} are discussed below.

\begin{algorithm}[!ht]
\caption{Deep Picard Iteration (DPI) Algorithm}
\KwIn{Number of Picard iterations $K$, number of data points $N$ per iteration, number of Monte Carlo sampling $M$, number of epochs $E$ per iteration for training neural networks, and weight factor $\lambda \ge 0$.\\
\textbf{Initialize:} $u_0(t,x) = 0$. }

\For{$k = 0, 1,\dots, K-1$}{
Sample $N$ pairs $\{(t_i, x_i)\}_{i=1}^N$ by first sample $t_i$ uniformly from $[0, T]$ and $x_i$ according to the distribution of $X_{t_i}$ in~\eqref{eq:forward_SDE_x0}.

Compute labels $\{(y_i, z_i)\}_{i=1}^N$ according to \eqref{eq:mc_app_of_yi} and \eqref{eq:mc_app_of_zi} with $u_k$, respectively.

If \( k = 0 \), initialize the weights \(\theta_{k+1}\) in the neural network for \( u_{k+1} \) randomly; otherwise, initialize it with the optimized weights \(\theta_k\) from \( u_k \).

Train the neural network for $E$ epochs on the training data by minimizing the supervised loss \eqref{loss_dpi} to obtain \( u_{k+1} \) with optimized weights \(\theta_{k+1}\). 
}
\KwOut{$u_K(t,x)$}
\label{alg_dpi}
\end{algorithm}

\paragraph{Data distribution}
The loss function~\eqref{loss_dpi} is defined on data points $\{(t_i, x_i)\}_{i=1}^N$ for which we need to specify its distribution. We achieve this using the forward SDE, as commonly done in the literature.
Let $X_t$ denote the solution of the following SDEs
\begin{equation}\label{eq:forward_SDE_x0}
    X_t = \xi + \int_{0}^t \mu(s,X_s)\rmd s + \int_0^t \sigma(s,X_s)\rmd W_s, \quad t \in [0,T],
\end{equation}
where $\xi$ is a \(d\)-dimensional square-integrable random variable, which is independent of $\{W_t\}_{0 \le t \le T}$. 
First, we sample $t_i$ uniformly from $[0, T]$ and then $x_i$ according to the distribution of $X_{t_i}$. Uniform sampling in time ensures the solution is uniformly accurate over time for Picard iteration. The distribution of $x_i$ is more subtle, as it depends on three factors: the initial distribution $\xi$, the drift function $\mu$, and the diffusion function $\sigma$.
The support of $\xi$ mainly reflects the spatial region of interest for the solution at the initial time $t=0$.
As explained earlier, the choice of $\mu$ and $\sigma$ is also not unique but sometimes can be related to the underlying probabilistic problem associated with the PDE, such as a stochastic control or sampling problem. These coefficients should also guide the training process toward the regions where the PDE solution is relevant. For further discussion, see \cite{pham2021neural,nusken2021solving}. 
In the numerical experiments below, we mainly let $X_t$ be standard Brownian motion for simplicity, ensuring a fair comparison with other methods.

\paragraph{Monte Carlo integration}
At given $(t_i, x_i)$, the labels $y_i \approx u_{k+1}(t_i,x_i)$ and $z_i \approx \nabla_x u_{k+1}(t_i,x_i)$ are computed numerically using the Monte Carlo approximations according to \eqref{eq:FeynmanKac} and \eqref{eq:picard_B-E-L}, respectively:
\begin{align}
        &y_{i} = \frac{1}{M}\sum_{j=1}^M [g(X_T^{t_i,x_i,i,j}) +(T-t_i) f_{u_k}(s^{i,j},X_{s^{i,j}}^{t_i,x_i,i,j})],\label{eq:mc_app_of_yi}\\
        &z_{i} = \frac{1}{M}\sum_{j=1}^M \bigg[(g(X_T^{t_i,x_i,i,j})-g(x_i)) H_T^{t_i,x_i,i,j} +\label{eq:mc_app_of_zi}\\
        &\quad\quad(T-t_i) (f_{u_k}(s^{i,j},X_{s^{i,j}}^{t_i,x_i,i,j})- f_{u_k}(t_i,x_i)) H_{s^{i,j}}^{t_i,x_i,i,j}],\nonumber 
    \end{align}
where $\{W_r^{i,j}\}_{1 \leq i \leq N, 1 \leq j \leq M, r \in [t_i, T]}$ are independently sampled paths of Brownian motions, time points $\{s^{i,j}\}_{1 \leq i \leq N, 1\leq j\leq M}$ are uniformly sampled from $[t_i, T]$, and $X_s^{t,x,i,j}$ and $H_s^{t,x,i,j}$ are samples to $X_s^{t,x}$ and $H_s^{t,x}$ by replacing $W_t$ with $W_t^{i,j}$. 

\paragraph{Sample generation}
As already mentioned in the previous two paragraphs, Algorithm \ref{alg_dpi} requires sampling of \( X_t \), \( X_s^{t,x} \), and \( H_s^{t,x} \). Now we explain how these samples can be obtained directly for several commonly encountered SDEs, including those used in the numerical experiments below.  
In such scenarios, our numerical experiments suggest that computing labels for $z_i$ only requires less than 20\% more time than computing labels for $y_i$; further details are provided in Section~\ref{sec:cha}.
For general SDEs in which these quantities can not be directly sampled, one can use Euler-Maruyama or any other discretization schemes to generate these samples. We focus on the sampling of \( X_s^{t,x} \) and \( H_s^{t,x} \), the sampling for \( X_t \) is similar to that of \( X_s^{t,x} \). To ease the notation, we set \( t = 0 \) and omit the superscript \( t,x \) in the subsequent expressions. 

\begin{enumerate}
    \item Brownian motion ($\mu \equiv 0$ and $\sigma \equiv \II_d$): 
    $$
    X_s = x + W_s, \,
    D_s = \II_d, \, \text{and} \, 
    H_s = s^{-1}\int_0^s[\sigma(r,X_r)^{-1} D_r]\transpose \rmd W_r = s^{-1} W_s.
    $$
    \item Geometric Brownian motion ($\mu \equiv 0$ and $\sigma = \diag(x)$):
    \begin{align*}
    & X_s = \diag(\exp(-\frac12s + W_s)) x,  \quad
    D_s =\diag(\exp(-\frac12s + W_s)), \\
    & H_s = s^{-1}\int_{0}^s[\sigma^{-1}(r,X_r) D_r]\transpose \rmd W_r = s^{-1}\diag(x_1^{-1},\dots,x_d^{-1}) W_s.
    \end{align*}
    
    \item 
   
    Ornstein–Uhlenbeck process ($\mu = -\theta x$ and $\sigma \equiv \II_d$):
   
    \begin{align*}
        &X_s =  e^{-\theta s}x +  \int_{0}^s e^{\theta(r-s)}\rmd W_r, \quad D_s = e^{-\theta s}\II_d, \\
        & H_s = s^{-1}\int_{0}^s[\sigma^{-1}(r,X_r) D_r]\transpose \rmd W_r = s^{-1}\int_{0}^s e^{-\theta r} \rmd W_r.
    \end{align*} 
    To efficiently sample $X_s$ and $H_s$, consider the auxiliary random variables $U_s \coloneqq \int_{0}^s e^{\theta(r-s)}\rmd W_r$ and $V_s \coloneqq \int_{0}^s e^{-\theta r} \rmd W_r$, which are jointly Gaussian with zero mean. By It\^{o} isometry, we have
    \begin{align*}
      &\EE [U_s\transpose U_s] = \EE \int_{0}^s e^{2\theta(r-s)}\II_d\rmd r = \frac{1}{2\theta}(1 - e^{-2\theta s})\II_d, \\
    &\EE [V_s\transpose V_s] = \EE \int_{0}^s e^{-2\theta r}\II_d\rmd r = \frac{1}{2\theta}(1 - e^{-2\theta s})\II_d, \\
        &\EE [U_s\transpose V_s] = \EE \int_{0}^s e^{-\theta s} \II_d \rmd r = s e^{-\theta s}\II_d.
    \end{align*}
    Thus, joint samples of $(U_s, V_s)$ can be obtained by generating  $2d$-dimensional Gaussian vectors with zero mean and the corresponding covariance matrix, which can then be transformed directly to samples of $X_s$ and $H_s$.
    
    Such a scheme also facilitates further variance reduction of our Monte Carlo estimators in the OU setting. In particular, by the property of conditional expectation, the first term in~\eqref{eq:B-E-L_formula} can be rewritten as
    \begin{equation*}
        \begin{aligned}
            &\EE[(g(X_T)-g(x)) H_T] \\
            = \, &\EE[T^{-1}(g(e^{-\theta T}x + U_T)-g(x))V_T] \\
            =\, &\EE[\EE[T^{-1}(g(e^{-\theta T}x + U_T)-g(x))V_T \mid U_T]] \\
            =\, &\EE[T^{-1}(g(e^{-\theta T}x + U_T)-g(x))\EE[V_2 \mid U_T]].
        \end{aligned}
    \end{equation*}
    Therefore, replacing $V_T$ with $\EE[V_T \mid U_T]$ yields an unbiased estimator of $\EE[(g(X_T) - g(x)) H_T]$ with reduced variance. Noting that $U_T$ and $V_T$ are jointly Gaussian with zero mean, we have $
        \EE[V_T \mid U_T] = \rho_T U_T,
    $ where $\rho_T \coloneqq \frac{2\theta T e^{-\theta T}}{1 - e^{-2\theta T}}<1$. The same technique applies to the second term in~\eqref{eq:B-E-L_formula}, namely, $\int_t^T \EE[(f_u(s, X_s^{t,x}) - f_u(t,x)) H_s^{t,x}] \rmd s$.
    Therefore, we can use
    \begin{equation*}
        \tilde{H}_s = s^{-1}\rho_s M_1 = \frac{2\theta e^{-\theta s}}{1-e^{-2\theta s}}\int_0^s e^{\theta(r-s)}\rmd W_r    
    \end{equation*}
    instead of $H_s$ to achieve further variance reduction.
\end{enumerate}

\subsection{Conceptual Comparison with Established Methods}
In this subsection, we briefly review a few representative established methods from the literature that will be benchmarked in the numerical section, followed by a conceptual comparison with DPI.

\paragraph{PINN with Hutchinson trace estimation}
For the PDE~\eqref{eq:pde}, the PINN loss is formulated as
\begin{equation}
\begin{aligned}
\mathcal{L}_{\text{PINN}}(\theta) 
= &  \frac{1}{T}\int_{0}^T\mathbb{E} \left| \partial_t u_\theta(t,X_t) + F_{u_\theta}(t,X_t) \right|^2\rmd t + \lambda_T \mathbb{E} \left|u_\theta(T, X_T)-g(X_T)\right|^2,
\label{loss_pinn}
\end{aligned}
\end{equation}
where the weight $\lambda_T > 0$ is used to balance the residual and terminal losses. 
When using PINN to solve high-dimensional second-order PDEs, computing the Hessian matrix is often memory-intensive and time-consuming. To address this, \cite{hu2024hutchinson} proposes using Hutchinson trace estimation (HTE)~\citep{hutchinson1989stochastic} to estimate the trace of the Hessian matrix, rather than computing the full Hessian, to reduce computational costs. We implement this technique in our implementation and refer to the resulting method as ``PINN-HTE".
Specifically, HTE uses random variables \(\boldsymbol{v} \in \mathbb{R}^d\) that satisfy \(\mathbb{E}_{\boldsymbol{v} \sim p(\boldsymbol{v})}\left[\boldsymbol{v} \boldsymbol{v}^T\right] = \II_d\) to estimate the trace of a matrix \(A\) as \(\operatorname{Tr}(A) = \mathbb{E}_{\boldsymbol{v} \sim p(\boldsymbol{v})}\left[\boldsymbol{v}^{\mathrm{T}} A \boldsymbol{v}\right]\). This can be approximated by \(\sum_{i=1}^{N_v} \boldsymbol{v}_i^{\mathrm{T}} A \boldsymbol{v}_i/N_v\) through computing the vector-Hessian product instead of the full Hessian matrix.
Each random vector $\boldsymbol{v}_i \in \mathbb{R}^d$ is independently sampled from $p(\boldsymbol{v})$, which is recommended to be the Rademacher distribution to minimize the variance of HTE~\citep{skorski2021modern}. We further notice that HTE is applicable only to semilinear PDEs, while for fully nonlinear PDEs, computing the full Hessian matrix is unavoidable.

\paragraph{Deep BSDE with diffusion-type loss} 
The work \cite{nusken2021interpolating} proposes a powerful variation of Deep BSDE method for semilinear PDEs with a diffusion-type loss:
\begin{align}
   & \mathcal{L}_{\text {D-DBSDE}}(\theta) =  \frac{1}{T}\int_{0}^{T}\mathbb{E}\left|{u}_\theta\left(t_K, X_{t_K}\right) -  {u}_\theta \left(t, X_{t}\right) + \int_{t}^{t_K} f_{u_\theta}(s,X_s) \mathrm{d} s \right. \notag \\
    &-\left.\int_{t}^{t_K} \sigma^{\top}(s, X_s) \nabla  {u}_\theta  \left(s, X_s\right) \mathrm{d} W_s   \right|^2\rmd t
     + \lambda_T \mathbb{E} \left|u_\theta(T, X_T)-g({X_T})\right|^2.
     \label{loss_bsde}
\end{align}
Here, $\lambda_T$ again serves to penalize the terminal cost. The choice of $t_K$ determines the time of the diffusion process: as $t_K \rightarrow t^+$, the loss converges to that of PINN, and as $t_K \rightarrow T$, the loss can be seen as a simple variation of the loss used in the Deep BSDE method. The additional parameter $t_K$ enables us to balance the local approximation in the PINN loss with the global approximation in the BSDE loss, leading to improved performance. From this point on, we refer to this approach as ``D-DBSDE". 

\paragraph{Deep backward dynamic
programming (DBDP)} The DBDP method introduced in \citep{pham2021neural} generalizes the original DBDP method for semilinear PDEs \citep{hure2020deep} to fully nonlinear PDEs. Different from other methods, DBDP needs to use a single network that outputs a $(d+1)$-dimensional vector at each discrete time step to represent $u(t_i, \cdot)$ and $\nabla_x u(t_i, \cdot)$ on a predefined time grid  $0=t_0 < t_1 < \cdots < t_N=T$. This approach forms a series of networks denoted as $\{(u_i, z_i)(\cdot; \theta_i)\}_{i=0}^{N}$.  The first step involves learning $\theta_N$ to approximate the terminal condition $g$ through the square loss $\mathcal{L}_{\text{DBDP}}^N(\theta_N) = \EE|u_N(X_T;\theta_N)- g(X_T)|^2$.
 Then, at the $i$-th time step, DBDP learns $\theta_{N+1-i}$ through the loss $\mathcal{L}_{\text{DBDP}}^{N+1-i}(\theta_{N+1-i})$, where
\begin{align}
        \mathcal{L}_{\text{DBDP}}^i(\theta_i) =& \, \EE\left |u_{i+1}(X_{t_{i+1}};\theta_{i+1}) -  u_i(X_{t_i};\theta_i) - z_i(X_{t_i};\theta_i)\transpose\sigma(t_i,X_{t_i})\Delta W_i \right.  \notag \\
        & \quad~+ \left.  f(t_i,X_{t_i},u_i(X_{t_i};\theta_i),z_i(X_{t_i};\theta_i),\nabla_x z_i(X_{t_{i+1}
        };\theta_{i+1}))\Delta t \right|^2.
        \label{loss_dbdp}
\end{align}
Unlike other methods, where updating network parameters can improve approximation accuracy globally in time, DBDP requires optimal results at each time step to control error accumulation. This step-by-step optimization can make DBDP more time-consuming compared to other methods, especially when high accuracy is required.

\paragraph{Differential regression learning via stochastic control}
When $F$ in \eqref{eq:pde} is a convex Hamiltonian, \cite{lefebvre2023differential} reformulates the PDE as a stochastic optimal control problem. They first obtain the optimal control by optimizing a neural network-based policy, following the approach in~\cite{han2016deep}, and then apply differential regression learning to estimate the solution $u$ and its gradient $\nabla u$. While this approach is effective for relatively low-dimensional problems, it may face two key challenges in higher dimensions. First, directly solving the optimal control problem involves complex optimization, requiring substantial computational resources. Second, errors from the pre-computed control network can accumulate during the subsequent estimation of $u$ and $\nabla u$, potentially compromising overall accuracy. In our experiments with the 100-dimensional HJB example from Section~\ref{sec:hjb}, we find this method significantly more time-consuming and less accurate compared to the DPI method. Thus, detailed numerical comparisons are omitted in the next section.

With these methods outlined, we can now examine how our proposed DPI method compares conceptually. The most significant difference lies in the convexity of the loss functions as a variational problem, before considering neural network approximation. The loss function in DPI, given by~\eqref{loss_dpi}, is convex with the target function $u_{\theta}$, as a result of the least-squares regression formulation. In contrast, the loss functions used in other methods, such as ~\eqref{loss_pinn}, ~\eqref{loss_bsde}, and ~\eqref{loss_dbdp}, which are based directly on fixed-point equations, are not convex with respect to the target function. We believe that this fundamental difference persists even when training neural networks as a finite-dimensional optimization problem, resulting in a much easier optimization process for DPI compared to other methods, ultimately leading to better accuracy in the final solution, although the finite-dimensional optimization problem itself is non-convex with respect to neural network parameters. We further support this conceptual comparison with empirical evidence in Section B of the supplementary material, where we plot the test error curves for all three numerical examples considered in this paper. DPI consistently exhibits a smoother and more monotonic decrease in error throughout training, in contrast to the often fluctuating behavior observed in other methods.

It is also worth noting that the data generation process in DPI, the most time-consuming part of our algorithm, can be easily parallelized across multiple CPUs and/or GPUs, significantly accelerating the algorithm.
For example, in Section~\ref{sec:fully}, the data generation time takes more than six times longer than the training, which can be greatly reduced with additional computing resources.
This ease of parallelization is another key advantage of our regression-based approach, which separates data generation from the learning process, making it more scalable and efficient than other methods. 
Although our experiments used a single GPU and already achieved superior results, parallelization will enable us solve much larger problems more efficiently in the future. Furthermore, with more computation resources for generating labels in parallel, we can use larger $N$ and \(M\), achieving better performance in less time.

\section{Numerical Results}

\subsection{Experimental settings}\label{sec:exp_settings}

In this section, we use the proposed DPI to solve three distinct high-dimensional problems, comparing its performance against other state-of-the-art approaches. Specifically, we solve two semilinear problems in Section~\ref{sec:cha} and Section~\ref{sec:hjb}, where we compare our method to PINN-HTE and the diffusion-type Deep BSDE method (D-DBSDE)~\citep{nusken2021interpolating}. Additionally, we solve a fully nonlinear problem in Section~\ref{sec:fully}, benchmarking our approach against standard PINN and DBDP \cite{hure2020deep,pham2021neural}. We do not include DBDP in the first two examples, as its performance is always slightly worse than that of DBSDE, and including both would make the result presentation unnecessarily crowded. The spatial dimension $d$ in all the PDEs solved is fixed at $100$. 
All methods are executed within the same computation time constraints on a single V100 GPU with 32GB memory. Each experiment is repeated three times with different random seeds, and we report the mean and standard deviation of the results. 

In our experiments, we use multilayer perceptrons (MLPs) with the ELU activation function. Specifically, Section~\ref{sec:cha} uses an MLP with four hidden layers of 128 neurons each, while Section~\ref{sec:fully} uses a smaller MLP with three hidden layers of 64 neurons each. For the HJB problem in Section~\ref{sec:hjb}, we adopt a specialized network architecture commonly used in the literature, with details provided later.

The SDEs are simulated with $\mu \equiv 0.0$ and $\sigma \equiv 1.0$, starting at $X_0  = \xi = 0$ except for the case in Section~\ref{sec:hjb}.
This simulated data is used to define the data distribution in DPI loss~\eqref{loss_dpi}, as explained in Section~\ref{sec:alg}, and the same distribution is also used for the training objectives in PINN, Deep BSDE, and DBDP.
We use the Adam optimizer with a fixed learning rate of 0.001 and a batch size of 512 for all experiments. For the other methods, each network is trained for as many epochs as possible within the total computation time budget. For our DPI, since there is an outer Picard iteration, we also specify the number of epochs used in each iteration given $N$ samples in \eqref{loss_dpi}. 
Key hyperparameters for DPI across the three examples are summarized in  Table~\ref{tab:dpi_params}.
For PINN-HTE, following the recommendation in \cite{hu2024hutchinson}, we set $N_v=16$ when estimating the trace of a matrix $A$ through \(\sum_{i=1}^{N_v} \boldsymbol{v}_i^{\mathrm{T}} A \boldsymbol{v}_i/N_v\). For D-DBSDE, we set $t_K = \min\{t + \Delta t, T\}$ and approximate the integral numerically by discretizing the interval $[t, t_K]$ with a time step $\delta t = 0.005$. We have $\Delta t = 0.1$ in Section~\ref{sec:cha} and $\Delta t = 0.25$ in Section~\ref{sec:hjb}.

\begin{table}[!ht]
\centering
\caption{Hyperparameters used in DPI, including the total number of iterations $K$, the number of samples $M$ utilized in the Monte Carlo approximation at each data point,  the data set size $N$ employed in each Picard iteration step, and the number of epochs $E$ employed in each Picard iteration.}\label{tab:dpi_params}
\begin{tabular}{lccccccc}
\toprule
PDE  & $K$ & $M$ & $N$ & $E$ & \makecell[{{t}}]{Data generation\\ time (s)} & \makecell[{{t}}]{Training\\ time (s)}\\ \midrule
{Burgers-type~(Sec~\ref{sec:cha})}  & 80  &      4096      &    4096 &  16 &    182.4 & 153.6   \\ 
{HJB~(Sec~\ref{sec:hjb})}  & 40  &   4096     &  4096 & 16 & {717.6} &     {81.2}  \\
{Fully nonlinear~(Sec~\ref{sec:fully})}  & 40 &    2048        &   1024   & 16 & 688.0 & 40.5     \\
\bottomrule
\end{tabular}
\end{table}

For evaluation, we generate 10,000 data points from the same distribution used in training. We quantify the performance using the relative mean absolute error of value (rMAE) and relative mean absolute error of gradient (g-rMAE) as:
\begin{align*}
    \operatorname{rMAE} &=\frac{\displaystyle{\sum_i\left|u_\theta (t_i,X_{t_i})-u^*(t_i,X_{t_i})\right|}}{\displaystyle{\sum_i |u^*(t_i,X_{t_i})|}}, \\
    \operatorname{g-rMAE} &= \frac{1}{d} \sum_{j=1}^d \frac{\displaystyle{\sum_i\left|\partial_{x_j} u_\theta(t_i, X_{t_i}) - \partial_{x_j} u^*(t_i, X_{t_i})\right|}}{\displaystyle{\sum_i|\partial_{x_j} u^*(t_i, X_{t_i})|}},
\end{align*}
where $u^*$ denotes the ground-truth solution. We also compute the relative squared error in addition to the relative absolute error, and find that both types of errors lead to the same conclusions when comparing different methods. Therefore, to avoid redundancy, we will only report the rMAE and g-rMAE metrics.

\subsection{A semilinear Burgers-type PDE}\label{sec:cha}

In this subsection, we compare DPI with PINN-HTE and D-DBSDE in a semilinear Burgers-type PDE considered in~\citep{chassagneux2014linear, han2017deep} as follows:

\begin{equation}\label{eq:cha}
    \partial_t u(t, {x}) + \frac{\sigma^2}{2} \Delta u(t, {x}) + \left[ \frac{\kappa\sigma^2}{\sqrt{d}} (u - \frac{1}{2}) - \frac{\sqrt{d}}{\kappa}  \right] \sum_{i=1}^d \frac{\partial u}{\partial x_i}(t, {x}) = 0.
\end{equation}
 When the terminal condition is 
 \[
 g(x) = \frac{e^{(T + \frac{\kappa}{\sqrt{d}} \sum_{i=1}^d x_i)}}{1 + e^{(T + \frac{\kappa}{\sqrt{d}} \sum_{i=1}^d x_i)}},
 \] the exact solution is given by 
 \[
 u^*(t, {x}) = \frac{e^{(t + \frac{\kappa}{\sqrt{d}} \sum_{i=1}^d x_i)}}{1 + e^{(t + \frac{\kappa}{\sqrt{d}} \sum_{i=1}^d x_i)}}.
 \]
We follow the previous settings \(\sigma = 1.0\) and \(T = 1.0\). We enlarge the parameter \(\kappa\) from 1.0 to 2.5 and then to 5.0 to increase the nonlinearity of the PDE, allowing us to evaluate the performance of different methods across varying levels of nonlinearity. %
The weight $\lambda$ in DPI or $\lambda_T$ in PINN-HTE and D-DBSDE is tuned within a broad range from 0.01 to 10000. 

We first demonstrate the robustness of DPI's weight parameter $\lambda$ compared to the terminal weight $\lambda_T$ used in PINN-HTE and D-DBSDE. Taking $\kappa = 2.5$ as an example, Figure~\ref{fig:weight} shows that the terminal weight $\lambda_T$ significantly affects the performance of PINN-HTE and D-DBSDE, necessitating adjustments to $\lambda_T$ to achieve a reasonable solution. In contrast, DPI, with an extremely broad range of $\lambda$, maintains outstanding and robust performance, highlighting its superior stability in the weight tuning. We remark that for $\lambda = 0$ in DPI, where supervision is applied only to the function value of $u$ itself, the results are still sufficiently good, although not the best among all the tested weights.

\begin{figure}[!htb]
    \centering
    \begin{subfigure}[b]{0.49\textwidth}
        \centering
\includegraphics[width=\textwidth]{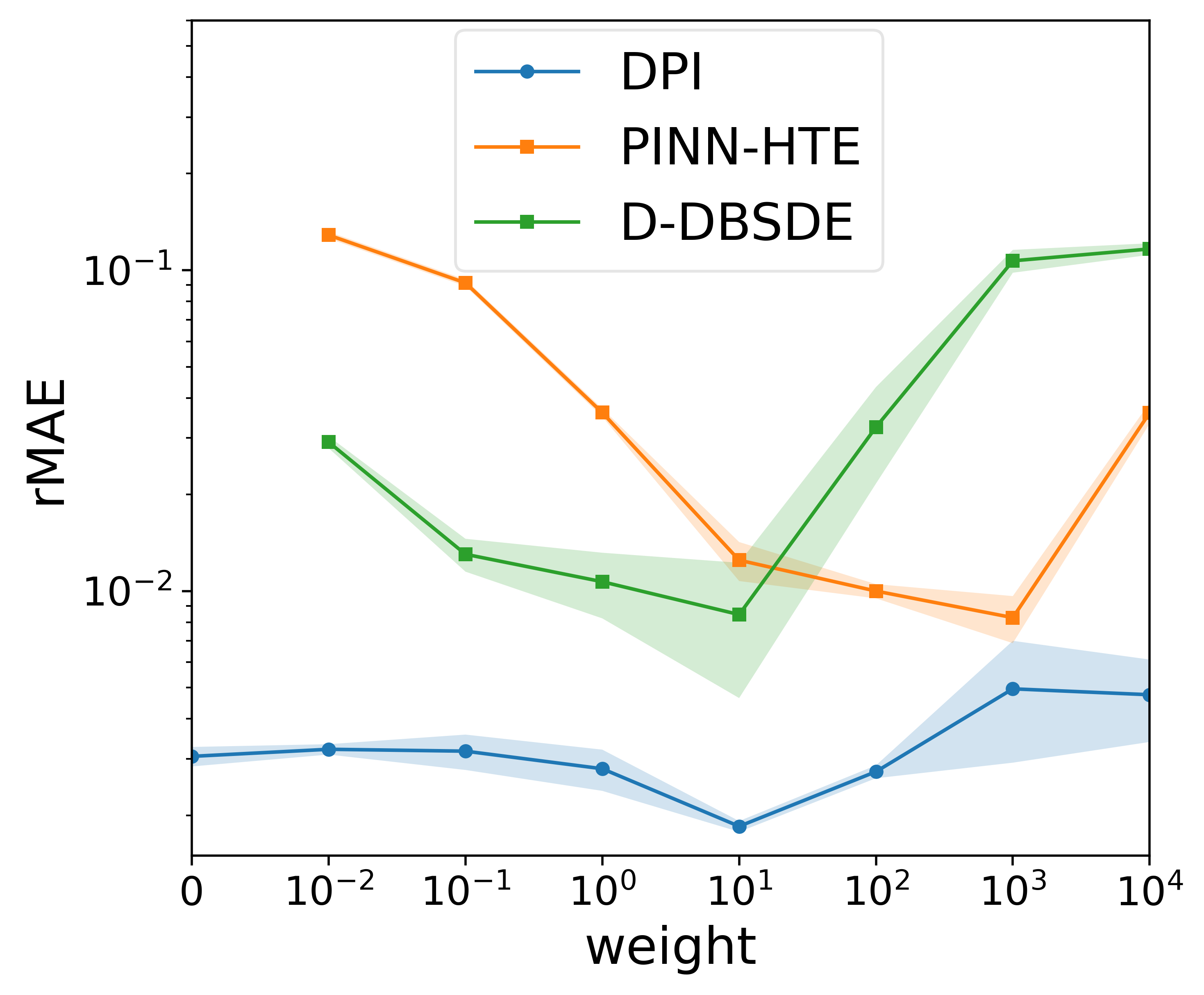}
    \end{subfigure}
    \hfill
    \begin{subfigure}[b]{0.49\textwidth}
        \centering
        \includegraphics[width=\textwidth]{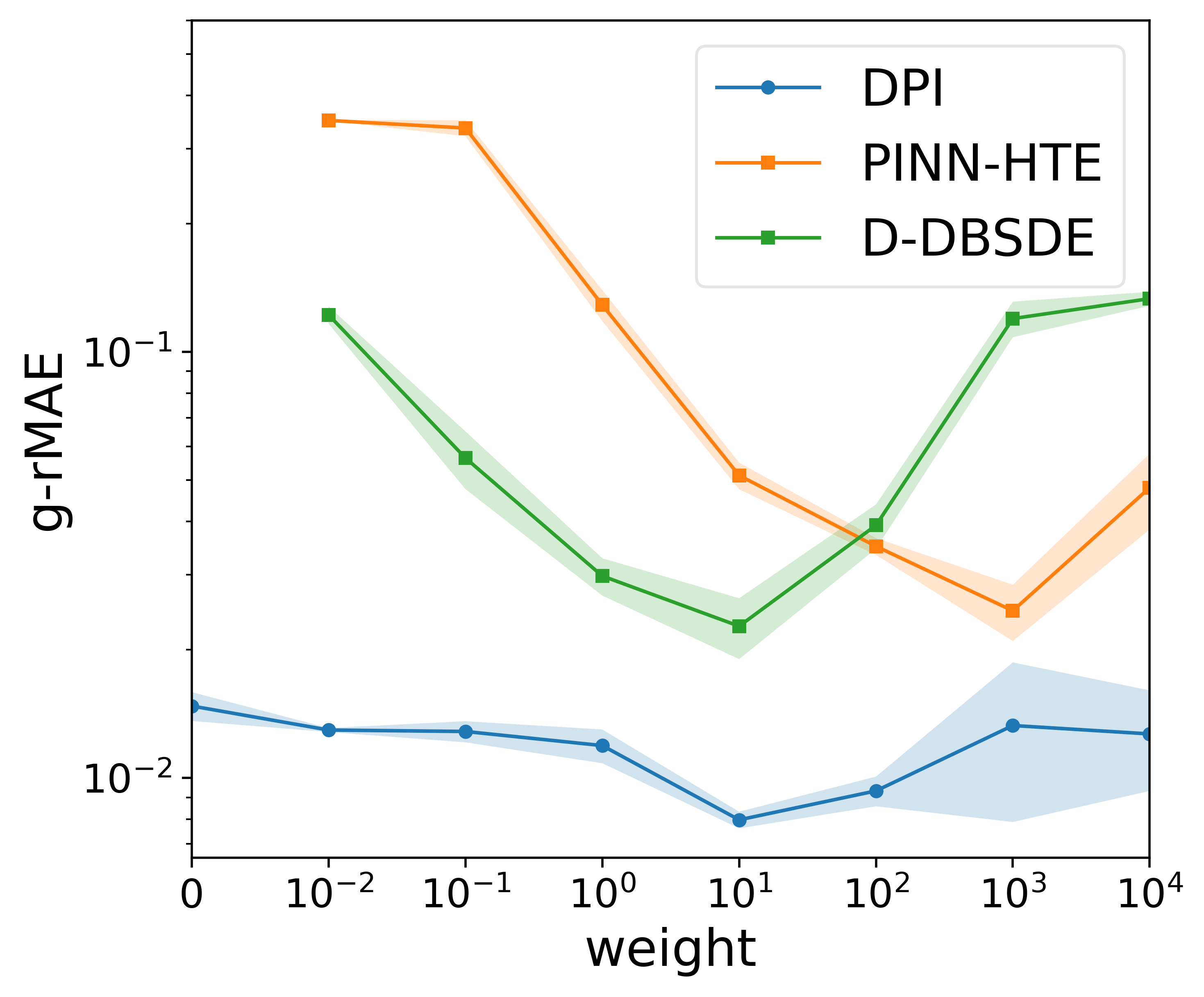}
    \end{subfigure}
\caption{Comparison of the relative errors for \( u \) and \( \nabla u \) among DPI, PINN-HTE and D-DBSDE with different weight hyperparameter $\lambda$ or $\lambda_T$ in loss in the Burgers-type PDE~\eqref{eq:cha} with $\kappa=2.5$. }
    \label{fig:weight}
\end{figure}

In Figure~\ref{fig:cha}, we summarize the optimal performance of each method after weight tuning for PDE~\eqref{eq:cha} with different $\kappa$. For $\kappa=1.0$, the problem is relatively simple, all methods perform well and DPI with gradient suipervision slightly outperforms the other methods. However, as \(\kappa\) increases to 5.0, indicating a more challenging problem, DPI substantially outperforms the other methods, showcasing superior robustness and efficacy. Moreover, DPI with gradient supervision consistently outperforms DPI without gradient supervision across various $\kappa$ values, demonstrating the benefit of incorporating gradients as additional labels.
It is noteworthy that for higher $\kappa$ values, PINN-HTE and D-DBSDE require significantly larger $\lambda_T$ to balance the loss and achieve optimal results. Conversely, DPI consistently exhibits stable and high-quality performance across different weights and problem parameters, demonstrating its potential for effectively and robustly addressing more complex problems.

\begin{figure}[!htb]
    \centering
    \begin{subfigure}[b]{0.49\textwidth}
        \centering
        \includegraphics[width=\textwidth]{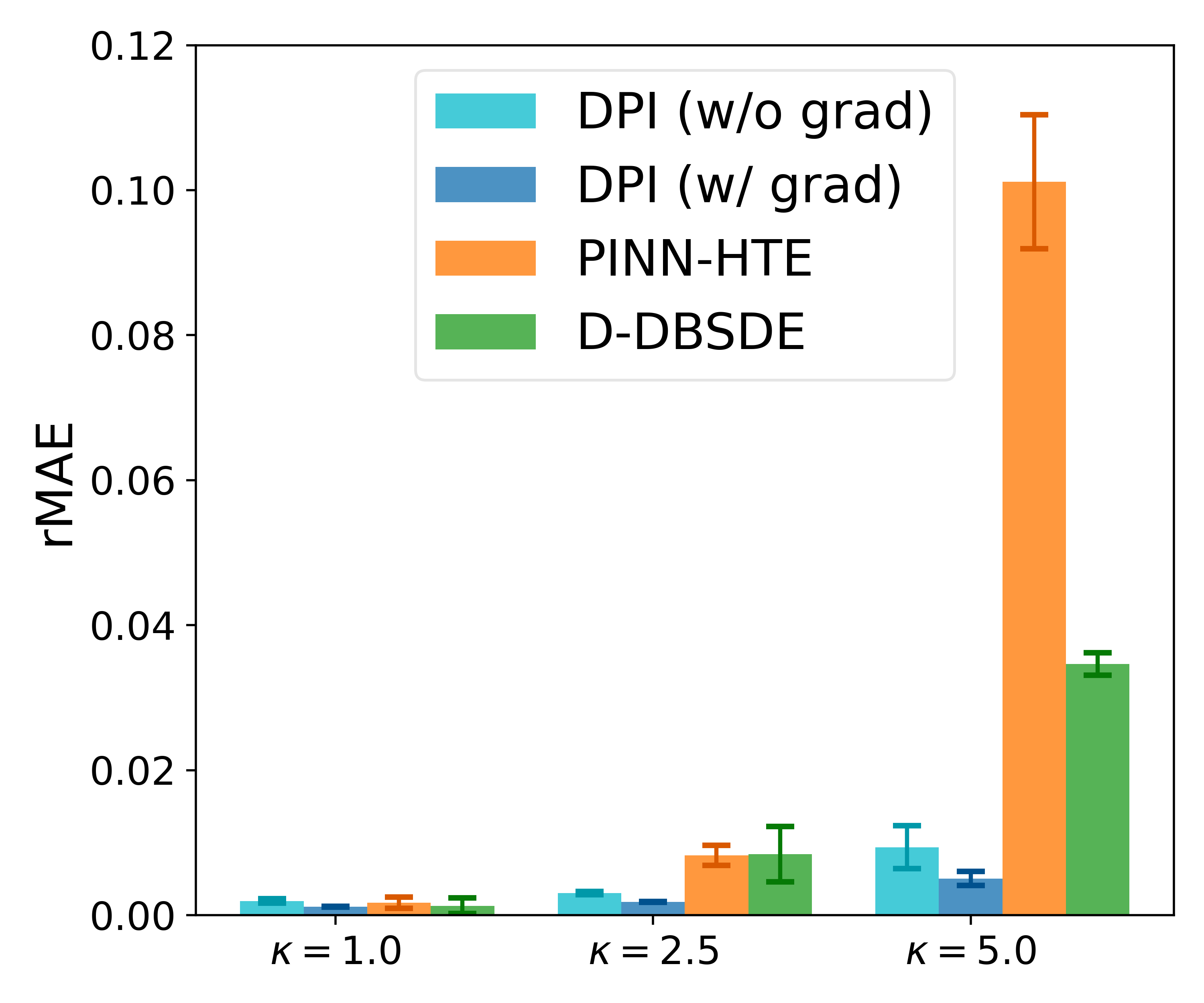}
    \end{subfigure}
    \hfill
    \begin{subfigure}[b]{0.49\textwidth}
        \centering
        \includegraphics[width=\textwidth]{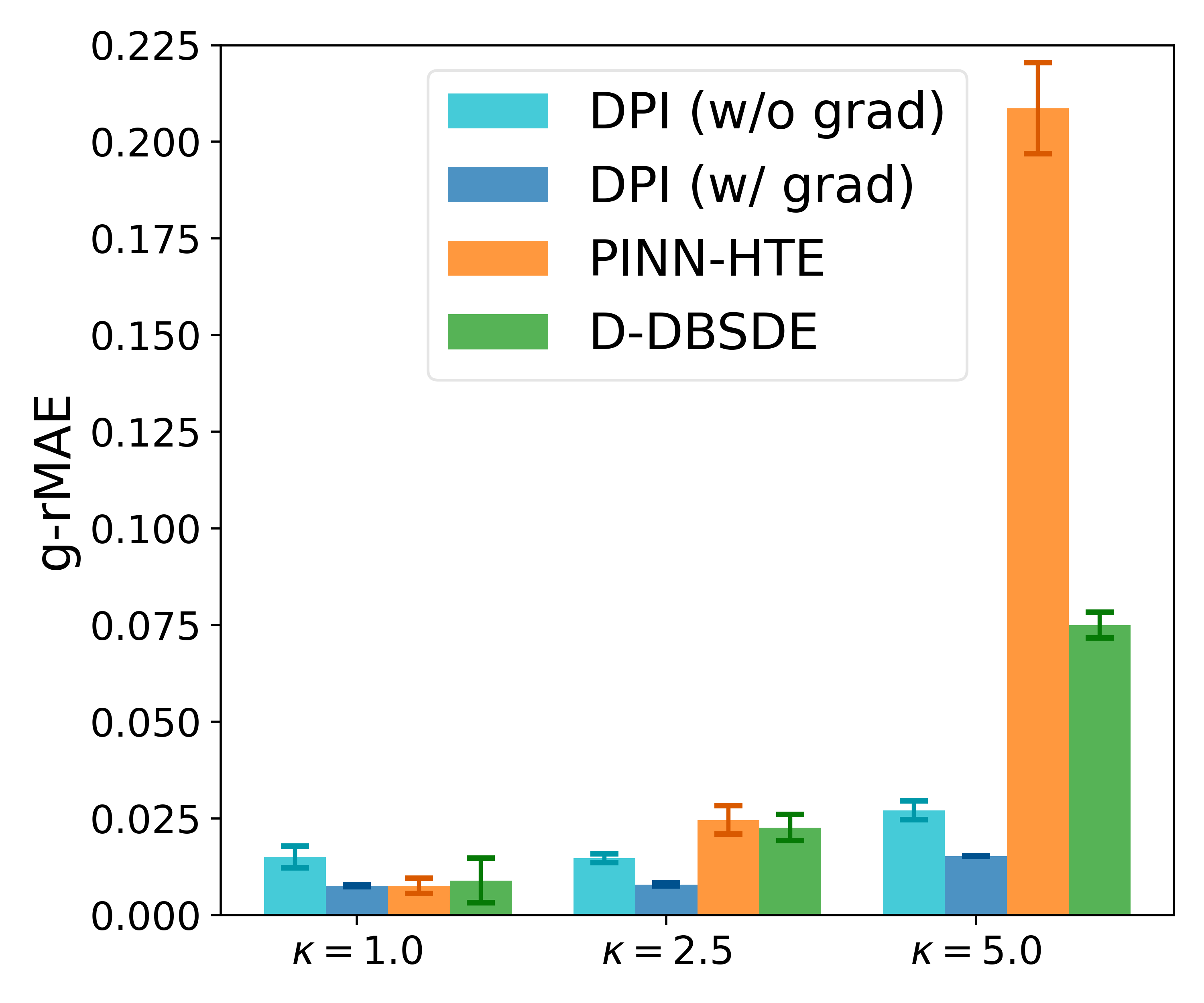}
    \end{subfigure}
\caption{Comparison of the relative errors for \( u \) and \( \nabla u \) among DPI, PINN-HTE, and D-DBSDE with different strength of nonlinearity (different $\kappa$) in the Burgers-type PDE~\eqref{eq:cha}.}
\label{fig:cha}
\end{figure}

We further evaluate the performance of DPI with varying hyperparameters for data generation ($M$ and $N$ in Algorithm~\ref{alg_dpi}) on the problem with $\kappa = 1.0$. As illustrated in Figure~\ref{fig:N_M} (left), we fix the number of samples used in the Monte Carlo approximation at each data point as $M = 4096$, the DPI weight as $\lambda = 1.0$ and the total iterations as $K = 20$. Then we vary the data size $N$ used in each Picard iteration step from 4096 to 131072. 
We observe that when $N$ is smaller, the results are less accurate compared to larger $N$, though they still provide sufficiently good solutions. In the right panel of Figure~\ref{fig:N_M}, we fix the number of data points used in each iteration at $N = 4096$ while varying the number of samples $M$ for the Monte Carlo approximation at each data point. As expected, increasing $M$ results in better outcomes and smaller variances, primarily due to the enhanced accuracy of the Monte Carlo approximation for generating labels.

\begin{figure}[!htb]
    \centering
    \begin{subfigure}[b]{0.49\textwidth}
        \centering
        \includegraphics[width=\textwidth]{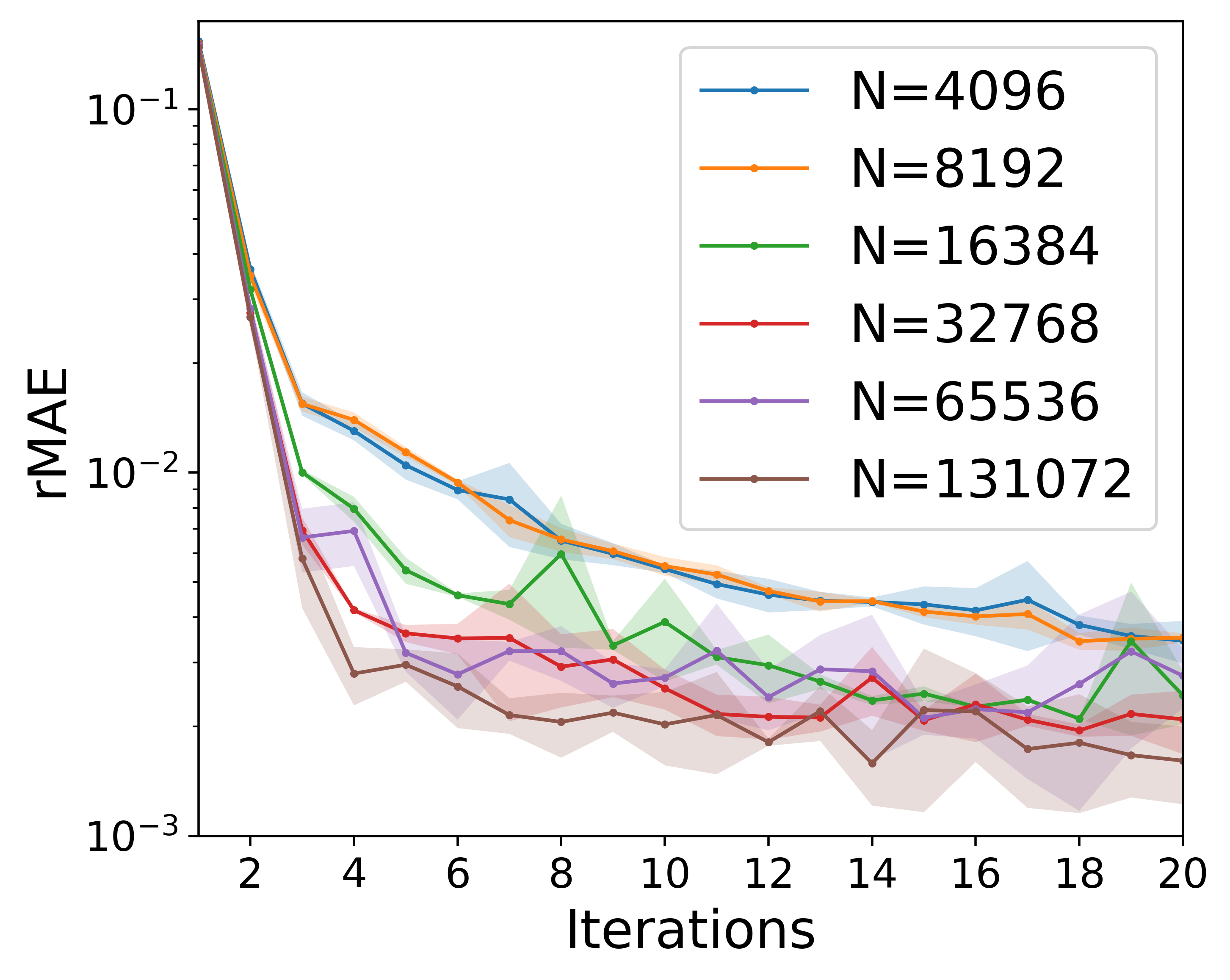}
        \label{fig:figure1}
    \end{subfigure}
    \hfill
    \begin{subfigure}[b]{0.49\textwidth}
        \centering
        \includegraphics[width=\textwidth]{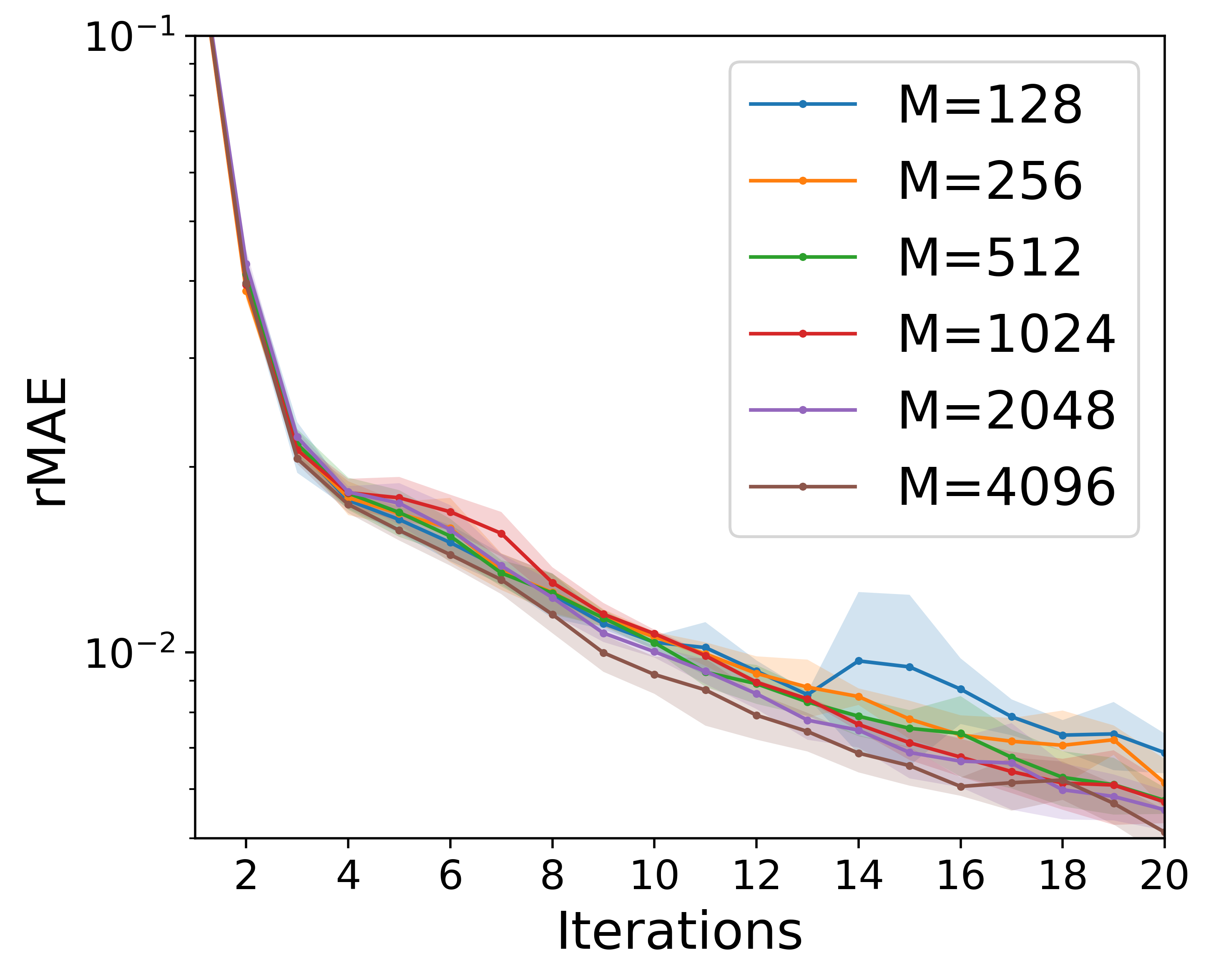}
        \label{fig:figure2}
    \end{subfigure}
    \caption{Relative error of $u$ in the Burgers-type PDE~\eqref{eq:cha} with varying data sizes $N$ and numbers of samples $M$ for Monte Carlo approximation at each data point in DPI. 
}
    \label{fig:N_M}
\end{figure}

In light of the above results, we provide guidance on selecting the key hyperparameters $K$, $M$, and $N$ to ensure that DPI yields a solution with the desired accuracy. This involves two key steps: first, selecting appropriate values for $M$ and $N$ to control the error within each Picard iteration step; and second, choosing the number of iterations $K$ to ensure overall accuracy. Specifically, at iteration step $k$, the targeted update $u_{k+1}$ is determined by the Picard iteration~\eqref{eq:Picard}, which is also equivalent to solving the linear PDE:
\begin{equation*}
    \begin{dcases}
        \partial_t u + \mu(t, x) \cdot \nabla_x u + \frac{1}{2}\mathrm{tr}(\sigma \sigma^\top(t, x) \nabla_{x}^2 u) + f_{u_k}(t, x) = 0, &\text{on } [0,T)\times\mathbb{R}^d,\\
        u(T,x) = g(x), &\text{on } \mathbb{R}^d.
    \end{dcases}
\end{equation*}
To assess the accuracy of each Picard step, one may monitor the DPI loss as a regression task using a small validation set, or evaluate the PINN loss~\eqref{loss_pinn} or DBSDE loss~\eqref{loss_bsde} associated with the linear PDE above. The values of $M$ and $N$ (as well as the network capacity, if necessary) can be gradually increased until the monitored loss falls slightly below the desired threshold. To estimate the overall error, the discrepancy between $u_{k+1}$ and the true solution $u$, one can either track the difference between consecutive iterates $u_{k+1}$ and $u_k$, or apply the PINN or DBSDE loss directly to the original nonlinear PDE~\eqref{eq:pde}. The iteration should continue until the estimated overall error saturates.

Finally, we remark on the computational cost associated with performing regression on the gradient.
When computing the gradient labels \( z \), the most time-consuming step is evaluating \( f_{u_k} \), which requires both the evaluation and automatic differentiation of the neural network \( u_k \). Nevertheless, since the computation of  $z$  involves evaluating  $f_{u_k}$  at the same points used for  $y$, these computations can be reused, significantly reducing the additional cost of computing  $z$.
To make a concrete example, the data generation times per Picard iteration in this example are 1.99s with and 2.28s without the calculation for \( z \), representing an increase of only 14.58\%. 
Additionally, supervising gradients increases the training time per Picard iteration from 1.35s to 1.92s by 42.22\%.

\subsection{A semilinear Hamilton-Jacobi-Bellman (HJB) equation}\label{sec:hjb}

The HJB equation is a fundamental PDE that arises in optimal control theory from dynamic programming principle, widely used across various fields such as finance, economics, and engineering. It plays a crucial role in determining the optimal strategy for controlling dynamic systems and thus is central to decision-making processes in complex, real-world systems. Recently, a specific HJB equation has also become pivotal in score-based generative modeling~\cite{song2021scorebased,bruna2024posterior,sun2024dynamical}, as explained below.

Consider a stochastic process following the Ornstein-Uhlenbeck (OU) process 
\begin{equation}
\rmd X_t = -X_t \rmd t+ \rmd W_t
\label{eq:OU_for_sampling}
\end{equation}
with $X_0 \sim \mu_0$. Assume $\mu_0$ has a density $p_0(x)$. Then the density of the distribution of $X_t$, $p(t,x)$, is governed by the Fokker-Planck equation
\begin{equation*}
\partial_t p = \nabla \cdot (xp) + \frac{1}{2} \Delta p.
\end{equation*}
With the transformation
\[u(t,x) = -\log p(T - t, x),\]
we derive the corresponding PDE of the HJB type:
\begin{equation}\label{eq:hjb}
    \partial_t u(t, x) + \frac{1}{2} \Delta u(t, x) + x^\top \nabla u(t, x) - \frac{1}{2} |\nabla u(t, x)|^2 - d = 0,
\end{equation}
with the terminal condition $g(x) = -\log p_0(x)$. If we can solve $u(t,x)$ from~\eqref{eq:hjb}, we can reverse the OU process~\eqref{eq:OU_for_sampling} in the distribution sense according to the reverse time formulation~\citep{anderson1982reverse, haussmann1986time}:
\begin{equation}\label{eq:reverse_sde}
\mathrm{d} \tilde{X}_t = \left(\tilde{X}_t - \nabla_x {u}(t, \tilde{X}_t)\right) \mathrm{d} t + \mathrm{d} \bar{W}_t, \quad \tilde{X}   _0 \sim p_T,
\end{equation}
such that $\tilde{X}_T$ has the density $p_0$.
Here $\tilde{W}_t$ is another independent Brownian motion, and $\nabla_x u(t,x)$ is usually known as the \textit{score}.
Note that, due to the exponential contraction property of the OU semigroup, \( p_T \) becomes close to the Gaussian distribution \(\mathcal{N}(0, \frac{1}{2} \II_d)\) given a sufficiently large \( T \), making it easy to sample from.
Therefore, solving the HJB equation~\eqref{eq:hjb} gives us a new approach to sample from the density $p_0$ (which may be high-dimensional and multimodal) by simulating~\eqref{eq:reverse_sde} from 0 to $T$. This method is quite different from traditional approaches like importance sampling or Markov chain Monte Carlo (MCMC) methods~\citep{kass1998markov}, which can easily struggle with multimodal distributions.

With this background, now we turn to solve the HJB equation~\eqref{eq:hjb} numerically with different methods. For this problem, we adopt a specialized network architecture inspired by \cite{pisgradnet,zhang2021path, salimans2021should}, commonly utilized to learn the scalar potential for sampling given its superior representational capabilities. We introduce two neural networks: a scalar-valued function \(r_\eta(t)\), implemented as a four-layer MLP with 64 neurons per layer for processing the time embedding, and a vector-valued function \(N_\gamma(t,x) \in \mathbb{R}^d\), implemented as a four-layer MLP with 512 neurons per layer. We then parameterize the solution \(u_\theta\) as
\begin{equation*}
u_\theta(T - t, x) = [r_\eta(t)-r_\eta(0)]\langle N_\gamma(t,x), x \rangle + [1 - r_\eta(t) + r_\eta(0)] g\left(e^{-\frac{t}{2}}x\right),
\end{equation*}
where \( \langle \cdot,\cdot \rangle \) denotes the standard inner product in \(\mathbb{R}^d\). We remark that this architecture explicitly enforces the terminal condition \(u_\theta(T, x) = g(x)\) within its design.

We set the target density $p_0(x)$ needed in the terminal condition of the PDE as the density of a Gaussian mixture model (GMM) in 100 dimensions with five components, with means $\mu^{(k)}_0$ uniformly sampled within \([-1, 1]\) in each dimension and a diagonal covariance matrix $\Sigma^{(k)}_0 = 2\II_d$, $k=1,\dots,5$. The weight $ w_k $ for each component is randomly initialized and then normalized. Under the OU process~\eqref{eq:OU_for_sampling}, we have 
 \[p(t, x) = \sum_{k=1}^5  w_k p(x;  {\mu}_t^{(k)}, {\Sigma}_t^{(k)}).\]
Here $p(x;\mu, \Sigma)$ denotes the density of a multivariate Gaussian distribution $\mathcal{N}(\mu, \Sigma)$.
The mean and covariance of each component at time $t$ are explicitly known as 
\[\mu_t^{(k)} = \mu_0^{(k)}  e^{- t},\quad \Sigma_t^{(k)} = \Sigma_0^{(k)} e^{-2 t} + \frac{1 - e^{-2 t}}{2}\II_d. \] 
According to our derivation above, the exact solution is $u^*(t, x) = -\log p(T-t, x)$.

We conduct experiments with different time horizons $T = 0.25, 0.5, 1.0$. For the forward SDE~\eqref{eq:forward_SDE_x0} used to define training data distribution, we set $\xi = \mathcal{N}(0, 4\II_d)$, $\mu \equiv 0, \sigma \equiv \II_d$. This choice ensures that the training data adequately covers the range of the OU process. 
Figure~\ref{fig:hjb_100d} shows the optimal results with tuned weights: DPI uses $\lambda = 100.0$ for $T = 0.25$ and $T=0.5$, and $\lambda = 10.0$ for $T=1.0$.
As shown in Figure~\ref{fig:hjb_100d}, DPI consistently outperforms PINN-HTE and D-DBSDE, with its advantage becoming more pronounced as the time horizon $T$ increases and the problem becomes more challenging. The performance of DPI with and without gradient supervision further highlights its robustness, particularly in tackling complex problems with longer time horizons.

\begin{figure}[!htb]
    \centering
    \begin{subfigure}[b]{0.49\textwidth}
        \centering
        \includegraphics[width=\textwidth]{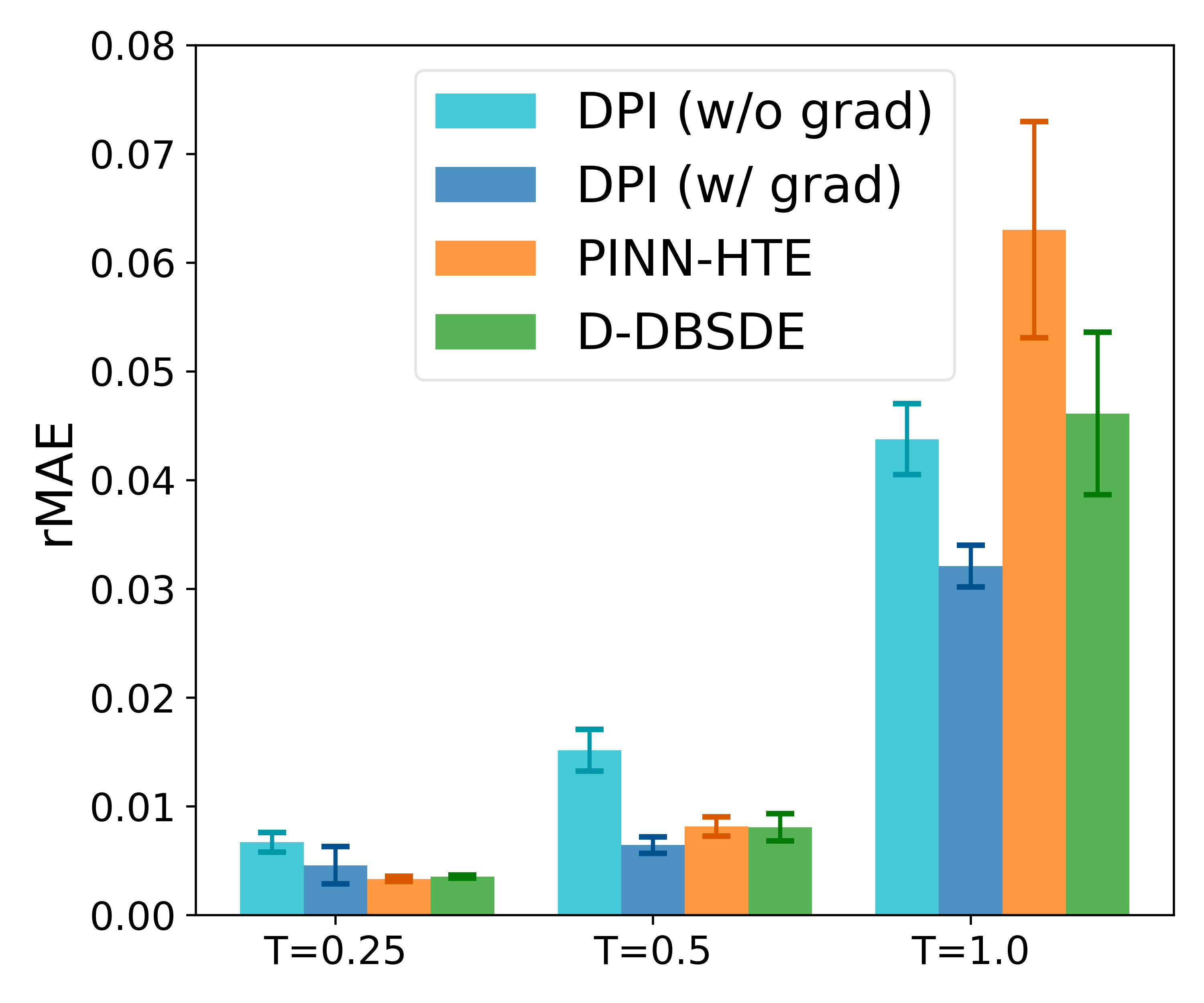}
    \end{subfigure}
    \hfill
    \begin{subfigure}[b]{0.49\textwidth}
        \centering
        \includegraphics[width=\textwidth]{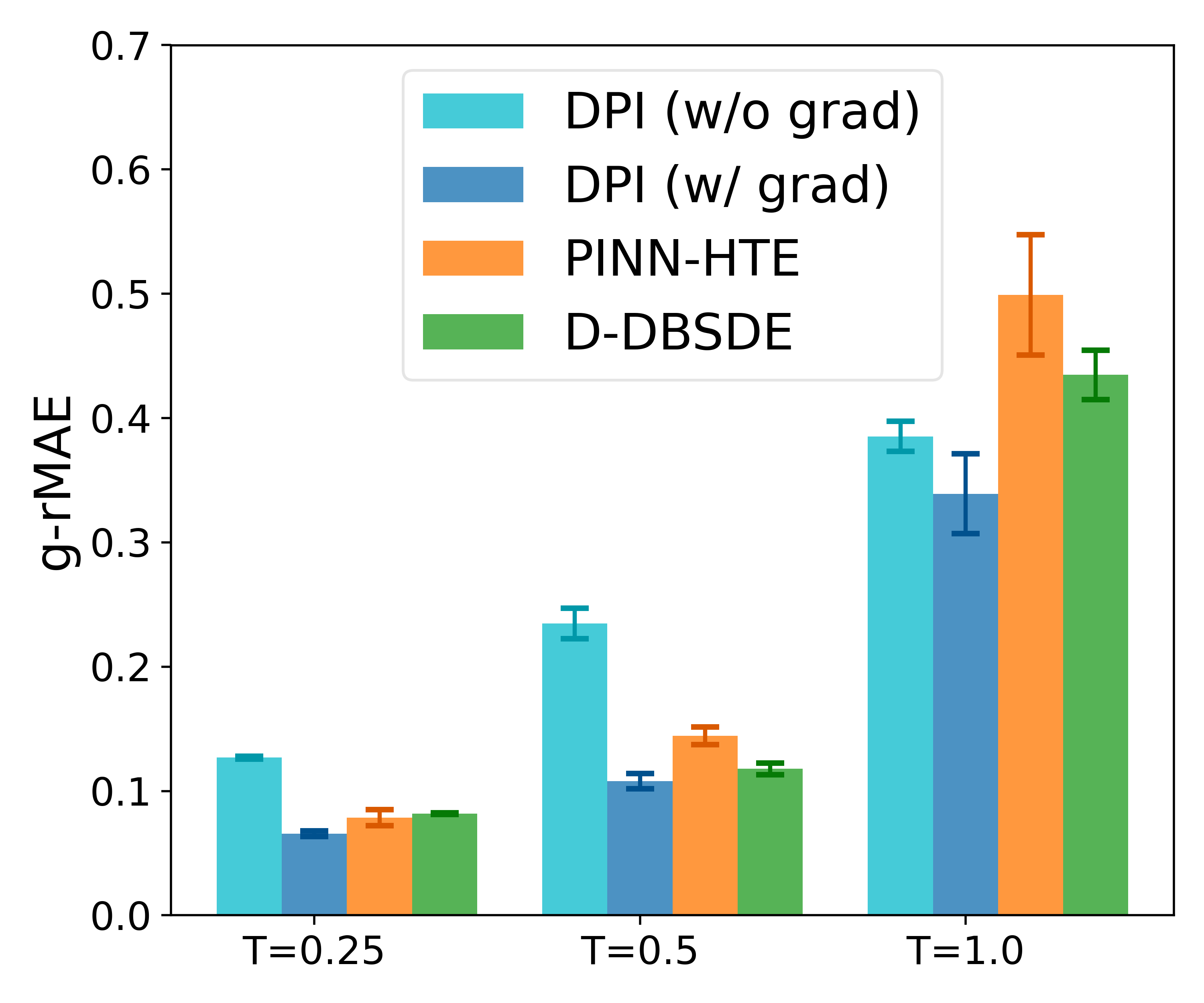}
    \end{subfigure}
\caption{Comparison of the relative errors for \( u \) and \( \nabla u \) among DPI, PINN-HTE, and D-DBSDE with different time horizons $T$ in the HJB equation~\eqref{eq:hjb}.}
    \label{fig:hjb_100d}
\end{figure}

We further validate the obtained solution by simulating the reverse SDE~\eqref{eq:reverse_sde} through the approximated score. As shown in Figure~\ref{fig:hjb_100d}, while DPI demonstrates superiority over the other two methods, the g-rMAE remains high, which hinders accurate sampling in 100 dimensions. Therefore, we use a 10-dimensional example instead for demonstration purposes.
To create a multimodal distribution that may challenge classical MCMC methods, we modify the target density $p_0(x)$ by selecting the means \(\mu_0^{(k)}\) to be more widely separated, uniformly sampled from \([-2, 2]\) instead of \([-1, 1]\) in each dimension, and by using a smaller covariance matrix \(\Sigma_0^{(k)} = \II_d\) instead of \(2\II_d\). We solve the corresponding HJB equation~\eqref{eq:hjb} with \( T = 0.25 \). We employ DPI with \(\lambda = 100.0 \), and initialize the sample distribution $\xi$ in~\eqref{eq:forward_SDE_x0} as \(\mathcal{N}(0, 2\II_d)\) to solve the problem. The final optimized network $\hat{u}(t,x)$ achieves an rMAE of 0.011 and a g-rMAE of 0.076. We then simulate the reverse SDE~\eqref{eq:reverse_sde} using the learned score $\nabla_x \hat{u}(t,x)$ and initiating the state $\tilde{X}_0$ according to the true density $p(T, x) $ to obtain final samples $\tilde{X}_T$. As shown in Figure~\ref{fig:hjb_10d}, the projected sample distribution from $\tilde{X}_T$ aligns well with the true distribution $p_0(x)$, demonstrating the effectiveness of our sampling procedure through solving the HJB~equation~\eqref{eq:hjb}. In future work, we plan to explore higher dimensions and longer time horizons to enhance the reliability of the sampling performance.

\begin{figure}[!htb]
    \centering
    \begin{subfigure}[b]{0.49\textwidth}
        \centering
        \includegraphics[width=\textwidth]{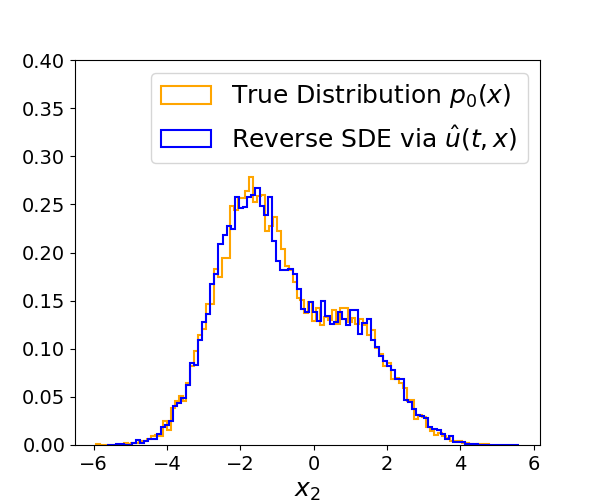}
    \end{subfigure}
    \hfill
    \begin{subfigure}[b]{0.49\textwidth}
        \centering        \includegraphics[width=\textwidth]{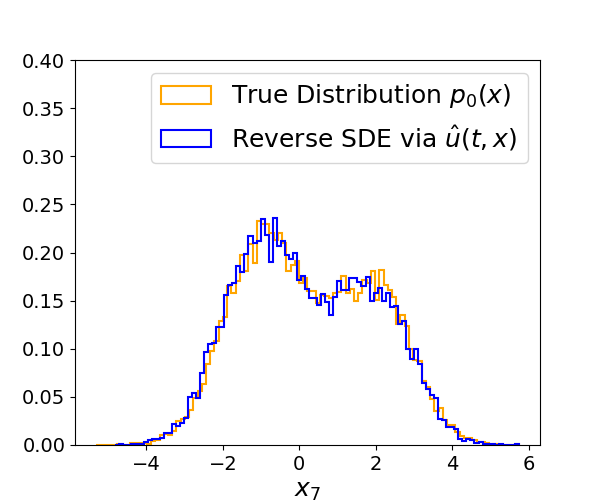}
    \end{subfigure}
\caption{Comparison of the projected sample distribution of the true distribution $p_0(x)$ and the distribution of $\tilde{X}_T$ obtained through reverse SDE~\eqref{eq:reverse_sde} via $\hat{u}(t,x)$ for a 10-dimensional Gaussian mixture density.}
    \label{fig:hjb_10d}
\end{figure}

\subsection{A fully nonlinear example}\label{sec:fully}
Finally we consider a fully nonlinear PDE modified from~\cite{beck2019machine}, which is related to $G$-Brownian motion \citep{peng2007g}
\begin{equation}\label{eq:gbm}
   \partial_t u(t, x) + \frac{1}{2} \Delta u(t, x) + \frac{1}{4} \sum_{i=1}^d \left|   \frac{\partial^2 u}{\partial x_i^2}(t, x) \right| - h(t, x) = 0.
\end{equation}
We construct the exact solution as a two-layer neural network with 
\begin{align*}
    u^*(t, x) &=   \sum_{j=1}^J v_j \sin \left( t + \sum_{i=1}^d w_i^j x_i \right), 
\end{align*}
and $h$ is set to satisfy the PDE \eqref{eq:gbm}
\[
h(t,x) = \partial_t u^*(t, x) + \frac{1}{2} \Delta u^*(t, x) + \frac{1}{4} \sum_{i=1}^d \left|   \frac{\partial^2 u^*}{\partial x_i^2}(t, x) \right|.
\] 
The parameters are sampled from $w_i^j \sim \frac{1}{\sqrt{d}}\mathcal{N}(0, 1)$, $v_j \sim \mathcal{N}(0, 1)$. We set $J = 2$ and randomized three groups of parameters for the exact solution, each serving as a different model to solve. The horizon is $T=1.0$.

In this problem, we use the original PINN rather than PINN-HTE since we need to compute all diagonal components of the Hessian matrix in the nonlinearity term. We also compare our method to DBDP \citep{pham2021neural}, which is designed to solve fully nonlinear problems. For DPI with gradient supervision, we use $\lambda=100.0$. For DBDP, we choose $\delta t = 0.02$ and set the number of gradient descent steps per sub-iteration to 150, ensuring that the computational cost aligns with PINN and DPI. The hyperparameter $\delta t$ has been tuned to achieve optimal performance within the given computational budget.
As shown in Figure~\ref{fig:gbm}, DPI with gradient supervision outperforms the other tested methods for above problems. The improvement of DPI brought by gradient supervision highlights the importance of gradient supervision in handling problems with higher order nonlinearity. It is also worth noting that fully nonlinear problems place greater demands on GPU memory during sampling than semilinear problems. By leveraging additional GPUs for parallel sampling, we anticipate a significant reduction in the time required for DPI sampling, which could lead to faster and more accurate results.

\begin{figure}[!htb]
    \centering
    \begin{subfigure}[b]{0.49\textwidth}
        \centering
        \includegraphics[width=\textwidth]{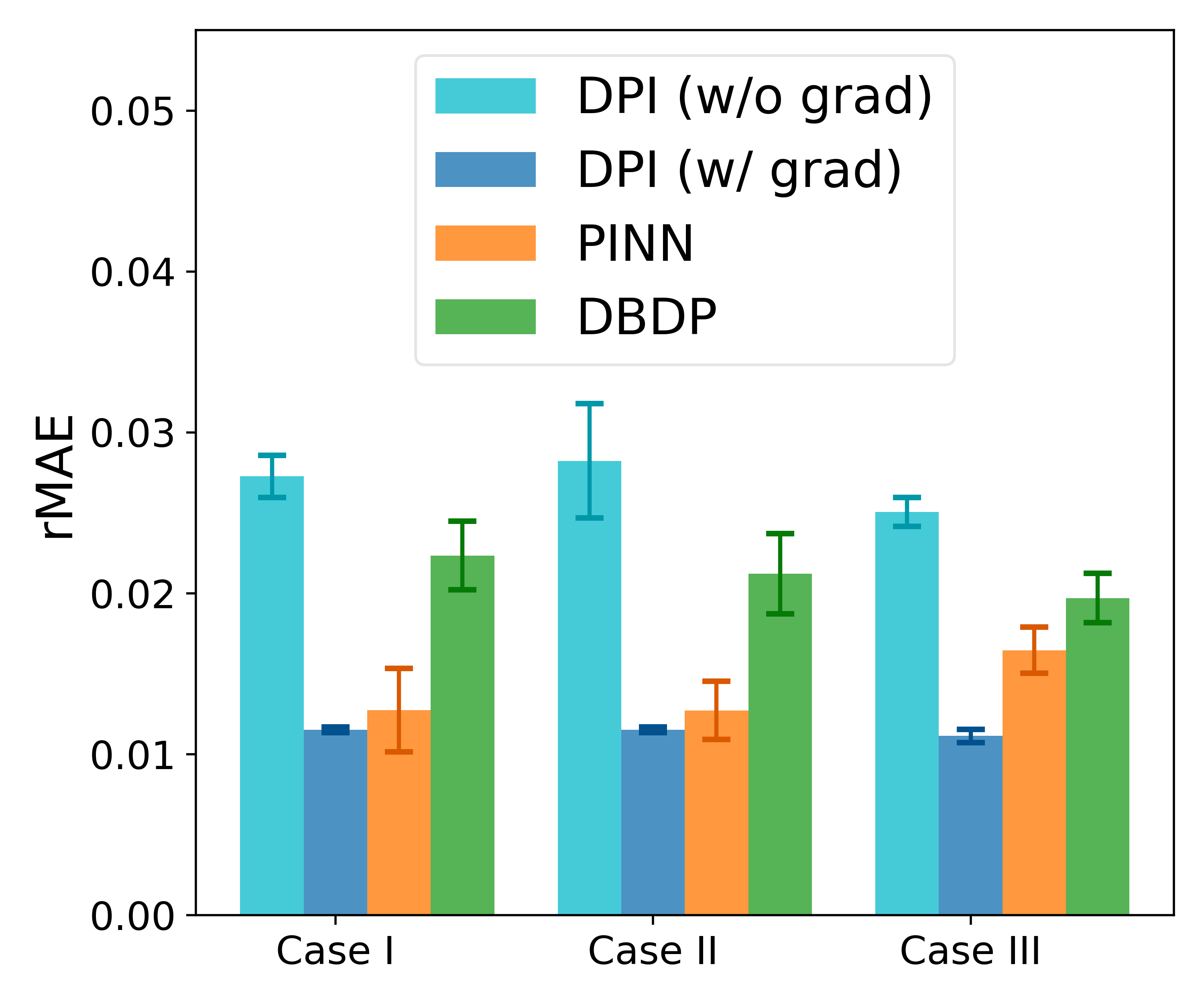}
    \end{subfigure}
    \hfill
    \begin{subfigure}[b]{0.49\textwidth}
        \centering
        \includegraphics[width=\textwidth]{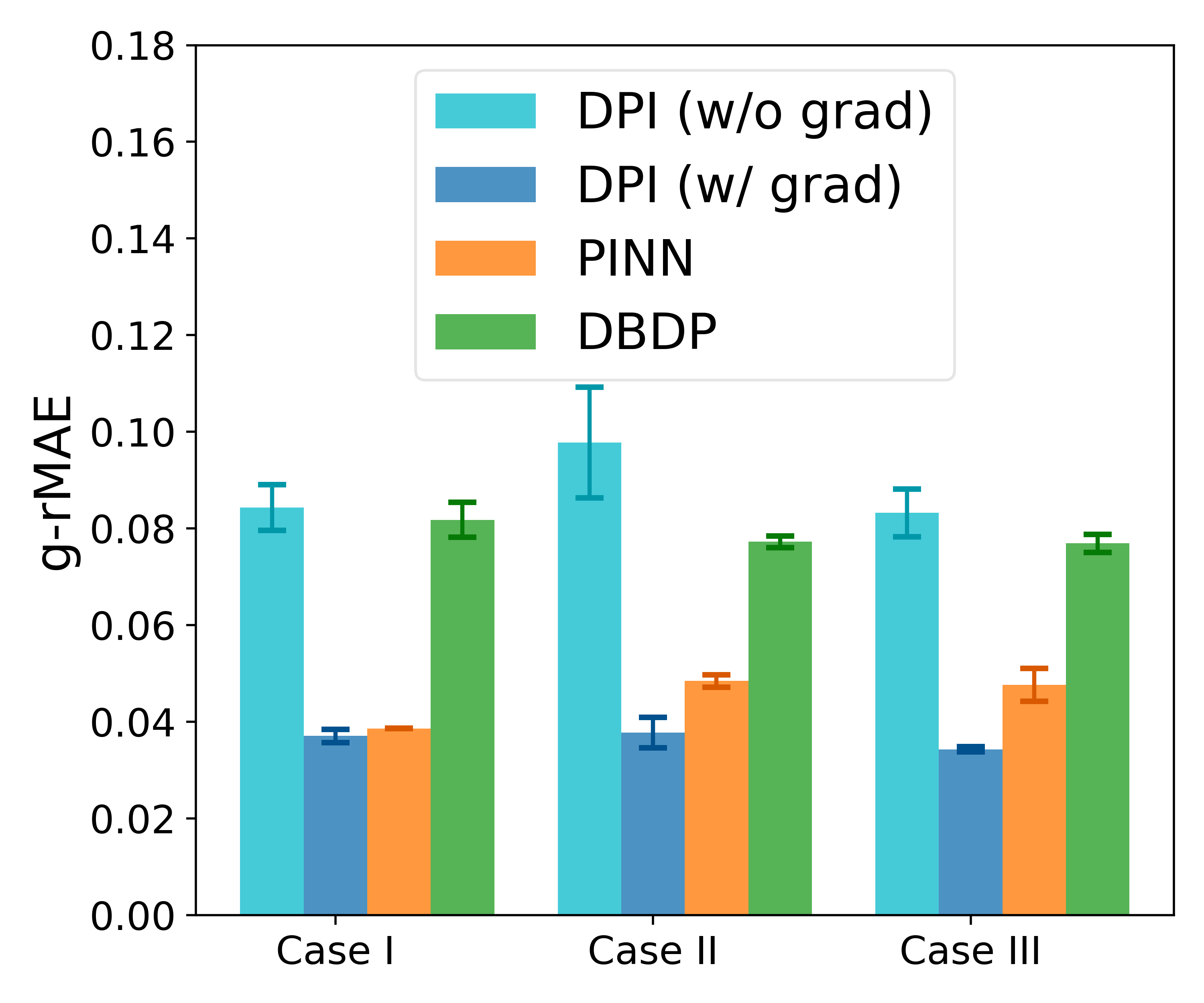}
    \end{subfigure}
\caption{Comparison of the relative errors for \( u \) and \( \nabla u \) among DPI, PINN-HTE, and DBDP in the fully nonlinear problem~\eqref{eq:gbm} with the exact solution randomized differently in three cases.}
    \label{fig:gbm}
\end{figure}

\section{Conclusion}
In this study, we introduce the Deep Picard iteration (DPI) method, a novel deep learning approach for solving high-dimensional semilinear and fully nonlinear PDEs. The method utilizes Picard iteration to transform the optimization challenges of neural network-based PDE solutions as standard regression tasks involving function values and gradients. Our experimental results demonstrate that DPI is robust across various parameter settings, consistently achieving superior performance compared to other state-of-the-art methods.

Future work will focus on several key aspects to further enhance the effectiveness of DPI. We plan to explore parallel data generation techniques to accelerate the method, making DPI scalable for even larger and more complex problems. Additionally, we intend to systematically study the impact of the drift \( \mu \) and diffusion \( \sigma \)  in training data generation \eqref{eq:forward_SDE_x0} on the final solution’s accuracy. 
Moreover, it is observed that the loss functions in other methods, such as PINNs and Deep BSDEs, can be recasted into a regression form by freezing certain parameters in the loss function with an additional fixed-point iteration, similar to the approach used in DPI. Investigating the performance of these methods under such modifications would be of interest. Finally, we are interested in extending the current approach to problems with spatial boundaries to broaden its applicability.

\bibliographystyle{plain}
\bibliography{ref}
\newpage
\appendix
\section{DPI with Variance-Reduced Hessian Estimator}\label{appendix: Hessian}

In this section, we introduce a Hessian estimator enhanced by variance reduction. The discussion proceeds in three parts. First, we present the basic form of the Hessian estimator from Theorem~2.3 in~\cite{ELWORTHY1994252}, which exhibits infinite variance. Second, we apply a variance reduction technique to this basic estimator, yielding a refined Hessian estimator with finite variance. Our approach is inspired by~\cite{germain2021neural}, which addresses a similar challenge in discrete time. Finally, we present numerical results demonstrating the performance of the DPI method combined with the variance-reduced Hessian estimator on the fully nonlinear example in Section~\ref{sec:fully}.

\subsection{Basic form of Hessian estimator}
By applying equation~(15) in~\cite{ELWORTHY1994252}, we have
\begin{equation*}
\label{eq:basic_Hess_estimator}
    \nabla_x^2 u(t,x) = \EE\left[g(X_T^{t,x})K_T^{t,x}\right] + \int_t^T \EE\left[f_u(s,X_s^{t,x}) K_s^{t,x}\right]\rmd s.
\end{equation*}
Here $K_s^{t,x}$ is another stochastic process, whose explicit form for the general case is given by equation~(15) in~\cite{ELWORTHY1994252}. For notational simplicity, here we only present explicit forms for the three types of processes considered in the main text: Brownian motion, geometric Brownian motion, and the Ornstein–Uhlenbeck process. In each case, we further consider
\begin{equation}\label{eq:Hessian-estimator}
    \tilde{K}_s^{t,x} \coloneqq \EE\left[K_s^{t,x}\mid X_s^{t,x}\right].
\end{equation}
Following the same argument used in the sample generation discussion of Section~\ref{sec:alg}, which leverages the property of conditional expectation, we obtain
\begin{equation*}
    \nabla_x^2 u(t,x) = \EE\left[g(X_T^{t,x})\tilde{K}_T^{t,x}\right] + \int_t^T \EE\left[f_u(s,X_s^{t,x}) \tilde{K}_s^{t,x}\right]\rmd s,
\end{equation*}
which yields an estimator with smaller variance.

\begin{enumerate}
    \item Brownian motion ($\mu \equiv 0$ and $\sigma \equiv \II_d$): 
    $$
    X_s^{t,x} = x + (W_s - W_t),\, K_s^{t,x} = \frac{4}{(s-t)^2}(W_s - W_{\frac{s+t}{2}})(W_\frac{s+t}{2} - W_t)\transpose.
    $$
   Define \(A_1=W_s-W_{\frac{s+t}{2}}\) and \(A_2=W_{\frac{s+t}{2}}-W_t\). Thus, we have \(X_s^{t,x}-x=A_1+A_2=W_s-W_t\).

Further define \(B_1=A_1+A_2\) and \(B_2=A_1-A_2\), giving \(A_1=\frac{B_1+B_2}{2}\), \(A_2=\frac{B_1-B_2}{2}\). By definition, we have
\begin{equation}
     \tilde{K}_s^{t,x} = \EE[K_s^{t,x} \mid X_s^{t,x}] = \frac{4}{(s-t)^2} \EE[A_1A_2\transpose \mid B_1].
\end{equation}
Since $A_1$ and $A_2$ are independent Gaussian vectors with \(A_1,A_2\sim N(0,\frac{s-t}{2}\II_d)\), then $B_1$ and $B_2$ are also are independent Gaussian vectors with mean zero and $\EE [B_2 B_2\transpose] = (s-t) \II_d$. Therefore,
% \[
% A_1A_2\transpose=\frac{1}{4}(B_1+B_2)(B_1-B_2)\transpose=\frac{1}{4}(B_1B_1\transpose + B_2 B_1\transpose - B_1B_2\transpose-B_2B_2\transpose).
% \]

\begin{align*}
    \EE[A_1A_2\transpose \mid B_1]&=\frac14 \EE[B_1B_1\transpose + B_2 B_1\transpose - B_1B_2\transpose-B_2B_2\transpose \mid B_1]\\& = \frac{1}{4}(B_1B_1\transpose-(s-t)\II_d),
\end{align*}
which gives
\begin{align*}
    \tilde{K}_s^{t,x} =\frac{1}{(s-t)^2}[(W_s-W_t)(W_s-W_t)\transpose-(s-t)\II_d].
\end{align*}
 \item Geometric Brownian motion ($\mu \equiv 0$ and $\sigma = \diag(x)$):
 \begin{align*}
     X_s^{t,x} &= \diag(\exp(-\frac{1}{2}(s-t) + W_s - W_t))x \\
    K_s^{t,x} &= \frac{4}{(s-t)^2}x^{-1}(W_s - W_{\frac{s+t}{2}})(W_\frac{s+t}{2} - W_t)\transpose x^{-1} \\
    &\quad - \frac{2}{s-t}x^{-2}\diag(W_\frac{s+t}{2} - W_t)
 \end{align*}
 where $x^{-1} = \diag(x_1^{-1},\dots,x_d^{-1})$ and $x^{-2} = \diag(x_1^{-2},\dots,x_d^{-2})$. Similar to the case of Brownian motion, we have 
 \begin{align*}
     \tilde{K}_s^{t,x} &= \frac{1}{(s-t)^2}x^{-1}[(W_s - W_t)(W_s - W_t)\transpose - (s-t)\II_d]x^{-1} \\
     &\quad - \frac{1}{s-t} x^{-2}\diag(W_s - W_t).
 \end{align*} 
 \item  Ornstein–Uhlenbeck process ($\mu = -\theta x$ and $\sigma \equiv \II_d$):
   
    \begin{align*}
        &X_s^{t,x} =  e^{-\theta (s-t)}x +  \int_{t}^s e^{\theta(r-s)}\rmd W_r, \\
        & K_s^{t,x} = \frac{4}{(s-t)^2}\Big(\int_t^{\frac{s+t}{2}} e^{-\theta(r-t)}\rmd W_r\Big)\Big(\int_{\frac{s+t}{2}}^s e^{-\theta(r-t)}\rmd W_r\Big)\transpose.
    \end{align*} 
    Define 
\begin{align*}
    &A_1=\int_t^{\frac{s+t}{2}} e^{-\theta(r-t)}\,\rmd W_r,\quad A_2=\int_{\frac{s+t}{2}}^s e^{-\theta(r-t)}\,\rmd W_r,\\
     &B_1=A_1+A_2,\, B_2=A_1-A_2,\, \widetilde{B} = \int_t^s e^{-\theta(s-r)}\,\rmd W_r.
\end{align*}

Then, 
\[
K_s^{t,x}=\frac{4}{(s-t)^2}A_1A_2\transpose=\frac{1}{(s-t)^2}(B_1B_1\transpose + B_2 B_1\transpose - B_1B_2\transpose-B_2B_2\transpose).
\]
Note that $B_1$, $B_2$, and $\widetilde{B}$ are mean-zero joint Gaussian vectors. Moreover, through It\^{o} isometry, we have
\begin{align*}
    &\EE[B_1 B_1\transpose] = \EE[B_2B_2\transpose] = \EE[\widetilde{B}\widetilde{B}\transpose] = v \II_d,\, v \coloneqq\frac{1-e^{-2\theta(s-t)}}{2\theta}, \\
    &\EE[B_1\widetilde{B}\transpose] = \int_{t}^s e^{-\theta(s-t)} \II_d\rmd r = e^{-\theta(s-t)}(s-t)\II_d, \\
    &\EE[B_2\widetilde{B}\transpose] = \int_{t}^{\frac{s+t}{2}} e^{-\theta(s-t)}\II _d\rmd r  - \int_{\frac{s+t}{2}}^s e^{-\theta(s-t)}\II_d\rmd r = 0.
\end{align*}
Since $\EE[B_2\widetilde{B}\transpose] = 0$, we know that $B_2$ and $\widetilde{B}$ are independent. Hence,
\[
\mathbb{E}[B_2B_2\transpose \mid \widetilde{B}]= \EE[B_2B_2\transpose] = v\II_d.
\]

Furthermore, by standard Gaussian conditioning we have
\[
\mathbb{E}[B_1\,|\,\widetilde{B}]=\frac{e^{-\theta(s-t)}(s-t)}{v}\,\widetilde{B},\quad
\operatorname{Var}(B_1\,|\,\widetilde{B})=\Bigl(v-\frac{e^{-2\theta(s-t)}(s-t)^2}{v}\Bigr)\II_d,
\]
so that
\[
\mathbb{E}[B_1B_1\transpose \mid \widetilde{B}]=\frac{e^{-2\theta(s-t)}(s-t)^2}{v^2}\,\widetilde{B}\widetilde{B}\transpose+\Bigl(v-\frac{e^{-2\theta(s-t)}(s-t)^2}{v}\Bigr)\II_d.
\]
Meanwhile, we also have
\[
\mathbb{E}[B_2 B_1\transpose - B_1 B_2\transpose\,|\,\widetilde{B}] = 0.
\]
Indeed, for any \(1\le i,j\le d\), \(i\neq j\), we have
\(\mathbb{E}[B_1^i B_2^j \mid \widetilde{B}]=0\), since \(B_2^j\) is independent of the pair \((B_1^j,\widetilde{B})\) and thus
\[
\mathbb{E}[B_1^i B_2^j \mid \widetilde{B}] = \mathbb{E}[B_2^j]\mathbb{E}[B_1^i \mid \widetilde{B}] = 0.
\]

Finally, we conclude
\begin{align*}
X_s^{t,x} &=  e^{-\theta (s-t)}x + \widetilde{B}, \\
    \tilde{K}_s^{t,x} &= \mathbb{E}[K_s^{t,x}\,|\,X_s^{t,x}]
=\frac{1}{(s-t)^2}\mathbb{E}[(B_1B_1\transpose\, - B_2 B_2\transpose)|\,\widetilde{B}] \\
&=\frac{e^{-2\theta(s-t)}}{v^2}\,\widetilde{B}\widetilde{B}\transpose-\frac{e^{-2\theta(s-t)}}{v}\II_d.
\end{align*}

with 
\[
\widetilde{B} = \int_t^s e^{-\theta(s-r)}\,\rmd W_r,\quad
v=\frac{1-e^{-2\theta(s-t)}}{2\theta}.
\]
It is worth mentioning that $\widetilde{B}$ is a Gaussian random vector with zero mean and covariance matrix $v\II_d$ and we can directly sample it.
\end{enumerate}

Returning to the basic form of the Hessian estimator~\eqref{eq:basic_Hess_estimator}, and noting that $\EE[K_s^{t,x}] = 0$, we can also apply the control variate version of equation~\eqref{eq:Hessian-estimator} to obtain
\begin{equation*}
    \nabla_x^2 u (t,x) = \EE[(g(X_T^{t,x}) - g(x))K_T^{t,x}] + \int_t^T \EE[(f_u(s,X_s^{t,x}) - f_u(t,x))K_T^{t,x}]\rmd s.
\end{equation*}
However, unlike the case of the gradient estimator, this estimator still suffers from infinite variance.
\begin{theorem}\label{thm:infinite-variance-Hessian}
     Assume Assumption~\ref{assump1} holds. Furthermore, $\nabla_x^2 \mu$ and $\nabla_x^2 \sigma$ are bounded continuous. Given a fixed $t \in [0,T)$ and $x \in \mathbb{R}^d$, assume that $g(x) \in C^2(\RR^d)$ with $\nabla_x g(x) \neq 0$, and $f(t,x) \in C^2([0,T] \times \RR^d)$ with $\nabla_x f(t,x) \neq 0$, where both functions have bounded second-order derivatives. We have\footnote{In the following, the Frobenius norm of a matrix will be denoted by $|\cdot|$.}
     \begin{align*}
    \lim_{s \rightarrow T^-}\EE|(g(X_T^{t,x}) - g(x)) K_T^{t,x}|^2 = +\infty,\, 
    \int_{t}^T \EE |(f(s,X_s^{t,x}) - f(t,x)) K_s^{t,x}|^2\rmd s = +\infty.
    \end{align*}
\end{theorem}
The proof will be presented in the next subsection once we introduce the Hessian estimator with finite variance.
Consider $\hat{X}_s^{t,x}$ as the solution to the modified SDE:
\begin{equation*}
    \hat{X}_s^{t,x} = x + \int_{t}^s \mu(r, \hat{X}_r^{t,x}) \rmd r - \int_{t}^s \sigma(r, \hat{X}_r^{t,x}) \rmd W_r, \quad s \in [t,T].
\end{equation*}
This formulation substitutes the Brownian motion $\{W_s\}$ in the SDE \eqref{eq:forward_SDE_general} with its negative counterpart $\{-W_s\}$. Likewise, we define $\hat{K}_s^{t,x}$ as the random variable where the Brownian motion in $K_s^{t,x}$ is replaced by $\{-W_s\}$. Given that $\{-W_s\}$ shares the same distribution as $\{W_s\}$, it follows that the pair $(\hat{X}_s^{t,x}, \hat{K}_s^{t,x})$ and $(X_s^{t,x}, K_s^{t,x})$ have identical distributions. Hence, we have
\begin{align*}
     \nabla_x^2 u(t,x) &= \EE\left[(g(X_T^{t,x}) - g(x))K_T^{t,x}\right] + \int_t^T \EE\left[(f_u(s,X_s^{t,x}) - f(t,x)) K_s^{t,x}\right]\rmd s \\
     &= \EE\left[(g(\hat{X}_T^{t,x}) - g(x))\hat{K}_T^{t,x}\right] + \int_t^T \EE\left[(f_u(s,\hat{X}_s^{t,x}) - f(x)) \hat{K}_s^{t,x}\right]\rmd s \\
     &= \frac{1}{2}\,\EE\left[(g(X_T^{t,x}) - g(x))K_T^{t,x} + (g(\hat{X}_T^{t,x}) - g(x))\hat{K}_T^{t,x}\right] \\
     &\quad + \frac{1}{2}\int_t^T \EE\left[(f_u(s,X_s^{t,x}) - f(t,x)) K_s^{t,x} + (f_u(s,\hat{X}_s^{t,x}) - f(t,x)) \hat{K}_s^{t,x}\right]\rmd s.
\end{align*}

\subsection{Variance-reduced Hessian estimator}
\begin{theorem}\label{thm:finite-variance-Hessian}
     Assume Assumption~\ref{assump1} holds. Furthermore, $\nabla_x^2 \mu$ and $\nabla_x^2 \sigma$ are bounded continuous. Given a fixed $t \in [0,T)$ and $x \in \mathbb{R}^d$, assume that $g(x) \in C^2(\RR^d)$, and $f(t,x) \in C^2([0,T] \times \RR^d)$, where both functions have bounded second-order derivatives. We have
     \begin{align*}
    &\sup_{s \in [t,T)}\EE|(g(X_T^{t,x}) - g(x))K_T^{t,x} + (g(\hat{X}_T^{t,x}) - g(x))\hat{K}_T^{t,x}|^2 < +\infty,\, \\
    &\int_{t}^T \EE |(f_u(s,X_s^{t,x}) - f(t,x)) K_s^{t,x} + (f_u(s,\hat{X}_s^{t,x}) - f(t,x)) \hat{K}_s^{t,x}|^2\rmd s < +\infty.
    \end{align*}
\end{theorem}
\begin{proof}
   We focus here on proving only the case of Brownian motion($\mu \equiv 0$ and $\sigma \equiv \II_d$), omitting a detailed proof of the general case. The reason is that the general proof, while sharing the same essential ideas, is more complicated and involves numerous computations regarding the order of various terms, which
   might obscure the core intuition for the reader. Nevertheless, we will briefly point out key differences between the Brownian case and the general setting whenever they arise in the proof.

   Recalling that we use $C$ as a positive constant independent of $t$, $s$ and $x$, We begin by demonstrating
   \begin{equation}\label{eq:Hessian_proof_eq_1}
       \EE|(g(X_T^{t,x}) - g(x) - \nabla_x g(x)\cdot(X_T^{t,x}-x))K_T^{t,x}|^2 \le C.
   \end{equation}
   First, the Cauchy-Schawarz inequality gives us
   \begin{align*}
       &\EE|(g(X_T^{t,x}) - g(x) - \nabla_x g(x)(X_T^{t,x}-x))K_T^{t,x}|^2 \\
       \le &\Big(\EE|g(X_T^{t,x}) - g(x) - \nabla_x g(x)\cdot(X_T^{t,x}-x)|^4\Big)^{\frac{1}{2}}\Big(\EE|K_T^{t,x}|^4\Big)^{\frac{1}{2}}.
   \end{align*}
   By the mean value theorem, there exists $\eta \in [0,1]$ such that $$g(X_T^{t,x}) - g(x) - \nabla_x g(x)\cdot(X_T^{t,x}-x) = \frac{1}{2} (X_T^{s,x} - x)\transpose \nabla_x^2 g(\eta x + (1-\eta)X_T^{t,x})(X_T^{s,x} - x).$$
   Noticing that $\nabla_x^2 g$ is bounded, we have
   \begin{align*}
       \EE|g(X_T^{t,x}) - g(x) - \nabla_x g(x)\cdot(X_T^{t,x}-x)|^4 &\le C\EE|X_T^{t,x} - x|^8 \\&= C\EE|W_T - W_t|^8  \le C(T-t)^4.
   \end{align*}
   Moreover,
   \begin{equation*}
       \EE|K_T^{t,x}|^4 = \frac{4^4}{(T-t)^8}\EE|(W_T - W_{\frac{T+t}{2}})(W_\frac{T+t}{2} - W_t)\transpose|^4 \le C(T-t)^{-4}.
   \end{equation*}
   Hence, we obtain the inequality \eqref{eq:Hessian_proof_eq_1}. We remark that, in the general case, the estimates of $\EE|X_T^{t,x} - x|^8$ and $\EE|K_T^{t,x}|^4$ follow from standard SDE estimation techniques, akin to the proofs of \eqref{thm:eq2} and \eqref{thm:eq3}.
   
   Noticing that $(\hat{X}_T^{t,x}, \hat{K}_T^{t,x})$ and $(X_T^{t,x}, K_T^{t,x})$ have identical distributions, we have
   \begin{equation*}
         \EE|(g(\hat{X}_T^{t,x}) - g(x) - \nabla_x g(x)\cdot(\hat{X}_T^{t,x}-x))\hat{K}_T^{t,x}|^2 \le C.
   \end{equation*}
   Therefore, by Cauchy-Schwartz inequality,
   \begin{align*}
       &\EE|(g(X_T^{t,x}) - g(x))K_T^{t,x} + (g(\hat{X}_T^{t,x}) - g(x))\hat{K}_T^{t,x}|^2 \\
       \le &3\{\EE|(g(X_T^{t,x}) - g(x) - \nabla_x g(x)\cdot(X_T^{t,x}-x))K_T^{t,x}|^2\\
       & +\EE|(g(\hat{X}_T^{t,x}) - g(x) - \nabla_x g(x)\cdot(\hat{X}_T^{t,x}-x))\hat{K}_T^{t,x}|^2\\
       & + \EE|\nabla_x g(x)\cdot(X_T^{t,x} - x) K_T^{t,x} + \nabla_x g(x)\cdot(\hat{X}_T^{t,x} - x) \hat{K}_T^{t,x}|^2\}.
   \end{align*}
   We need only to prove that $\EE|\nabla_x g(x)\cdot(X_T^{t,x} - x) K_T^{t,x} + \nabla_x g(x)\cdot(\hat{X}_T^{t,x} - x) \hat{K}_T^{t,x}|^2 \le C$. In the case of Brownian motion, we have $K_T^{t,x} = \hat{K}_T^{t,x}$ and $X_T^{t,x} + \hat{X}_T^{t,x} = 2x$. Therefore, $\EE|\nabla_x g(x)\cdot(X_T^{t,x} - x) K_T^{t,x} + \nabla_x g(x)\cdot(\hat{X}_T^{t,x} - x) \hat{K}_T^{t,x}|^2= 0$. In the general case,
   \begin{align*}
       &\EE|\nabla_x g(x)\cdot(X_T^{t,x} - x) K_T^{t,x} + \nabla_x g(x)\cdot(\hat{X}_T^{t,x} - x) \hat{K}_T^{t,x}|^2 \\
       \le\,&2\EE|\nabla_x g(x)\cdot(X_T^{t,x} + \hat{X}_T^{t,x} - 2x)K_T^{t,x}|^2 + 2\EE|\nabla_x g(x)\cdot(\hat{X}_T^{t,x}-x)(K_T^{t,x} - \hat{K}_T^{t,x})|^2 \\
       \le\,&C \Big(\EE|X_T^{t,x} + \hat{X}_T^{t,x} - 2x|^4\Big)^{\frac{1}{2}}\Big(\EE|K_T^{t,x}|^4\Big)^{\frac{1}{2}} + C\Big(\EE|\hat{X}_T^{t,x}  - x|^4\Big)^{\frac{1}{2}}\Big(\EE|K_T^{t,x} - \hat{K}_T^{t,x}|^4\Big)^{\frac{1}{2}}.
   \end{align*}
   Standard estimation of SDE gives that $\EE|(X_T^{t,x} + \hat{X}_T^{t,x} - 2x)|^4 \le C(T-t)^{4}$, $\EE|K_T^{t,x}|^4 \le C(T-t)^{-4}$, $\EE|\hat{X}_T^{t,x}  - x|^4 \le C(T-t)^2$ and $\EE|K_T^{t,x} - \hat{K}_T^{t,x}|^4 \le C(T-t)^{-2}$. We omit the proof of this part.

   Follows the same proof, we can show that
   $$
   \EE |(f_u(s,X_s^{t,x}) - f(t,x)) K_s^{t,x} + (f_u(s,\hat{X}_s^{t,x}) - f(t,x)) \hat{K}_s^{t,x}|^2 \le C.
   $$
   Hence,
   \begin{equation*}
       \int_t^T \EE |(f_u(s,X_s^{t,x}) - f(t,x)) K_s^{t,x} + (f_u(s,\hat{X}_s^{t,x}) - f(t,x)) \hat{K}_s^{t,x}|^2 \rmd s \le C,
   \end{equation*}
   which concludes the proof.
\end{proof}
Now we return to present the proof of Theorem \ref{thm:infinite-variance-Hessian}.
\begin{proof}[Proof of Theorem~\ref{thm:infinite-variance-Hessian}]
Similarly to the proof of Theorem~\ref{thm:finite-variance-Hessian}, we only provide the proof for the Brownian motion case.
We intend to prove that
\begin{align*}
    &\EE|(g(X_T^{t,x}) - g(x)) K_T^{t,x}|^2 \ge C(T-t)^{-1},\\
    &\EE|(f(s,X_s^{t,x}) - f(t,x)) K_s^{t,x}|^2 \ge C(s-t)^{-1}.
\end{align*}
Recalling the inequality \eqref{eq:Hessian_proof_eq_1}
\begin{equation*}
    \EE|(g(X_T^{t,x}) - g(x) - \nabla_x g(x)\cdot(X_T^{t,x}-x))K_T^{t,x}|^2 \le C,
\end{equation*}
and similarly,
\begin{equation*}
    \EE|(f(s,X_s^{t,x}) - f(t,x) - \nabla_x f(t,x)\cdot(X_s^{t,x}-x))K_s^{t,x}|^2 \le C,
\end{equation*}
we only need to prove that
\begin{align*}
    &\EE|\nabla_x g(x)\cdot(X_T^{t,x} - x) K_T^{t,x}|^2 \ge C(T-t)^{-1}, \\
    &\EE|\nabla_x f(t,x)\cdot(X_s^{t,x} - x) K_s^{t,x}|^2 \ge C(s-t)^{-1}.
\end{align*}
The proof of the above two equations is the same, and we only give the proof of the first one. Noticing that for the any $a, b \in \RR^d$, $|a b\transpose| = |a||b|$ by the definition of Frobenius norm, we have
\begin{align*}
    &\EE|\nabla_x g(x)\cdot(X_T^{t,x} - x) K_T^{t,x}|^2 \\
    = &\frac{16}{(T-t)^4}\EE|\nabla_x g(x)\cdot(W_T - W_t) (W_T - W_\frac{T+t}{2})(W_\frac{T+t}{2} - W_t)\transpose|^2 \\
    =& \frac{16}{(T-t)^4} \EE|\nabla_x g(x)\cdot (W_T - W_t)|^2 |W_T - W_\frac{T+t}{2}|^2 |W_\frac{T+t}{2} - W_t|^2 \\
    \ge& \frac{C}{T-t}|\nabla_x g(x)|^2,
    \end{align*}
which concludes the proof.
\end{proof}

\subsection{Numerical experiments}\label{appendix: Hessian_exp}
A natural extension of the DPI is to incorporate the Hessian estimator analyzed above. For each sampled point $(t_i, x_i)$, we compute the Hessian matrix label $h_i$ using the introduced finite-variance estimator and incorporate it into our regression framework by augmenting the loss function. This allows us to match $h_i$ closely with $\nabla^2_x u_{\theta_k}(t_i, x_i)$, computed via second-order automatic differentiation. In principle, including the Hessian provides further information of the curvature of $u$, potentially capturing more nuanced local behavior.  
Specifically,
$h_i$ is computed using the Monte Carlo estimator given by 
\begin{align*}
    h_i &= \frac{1}{M}\sum_{j=1}^M \Bigg[(g(X_T^{t_i,x_i,i,j})-g(x_i)) K_T^{t_i,x_i,i,j} + (g(\hat{X}_T^{t_i,x_i,i,j})-g(x_i)) \hat{K}_T^{t_i,x_i,i,j} \\
        &\quad\quad\quad\quad\quad  + (T-t_i) \Big((f_{u_k}(s^{i,j},X_{s^{i,j}}^{t_i,x_i,i,j})- f_{u_k}(t_i,x_i)) K_{s^{i,j}}^{t_i,x_i,i,j} \\
        &\quad\quad\quad\quad\quad\quad\quad\quad\quad\quad  + (f_{u_k}(s^{i,j},\hat{X}_{s^{i,j}}^{t_i,x_i,i,j})- f_{u_k}(t_i,x_i)) \hat{K}_{s^{i,j}}^{t_i,x_i,i,j})\Big)\Bigg],
    \end{align*}
where $\{W_r^{i,j}\}_{1 \leq i \leq N, 1 \leq j \leq M, r \in [t_i, T]}$ are independently sampled paths of Brownian motions, time points $\{s^{i,j}\}_{1 \leq i \leq N, 1\leq j\leq M}$ are uniformly sampled from $[t_i, T]$, and $X_s^{t,x,i,j}$, $\hat{X}_s^{t,x,i,j}$,$K_s^{t,x,i,j}$ and $\hat{K}_s^{t,x,i,j}$ are samples to $X_s^{t,x}$, $\hat{X}_s^{t,x}$, $K_s^{t,x}$ and $\hat{K}_s^{t,x}$ by replacing $W_t$ with $W_t^{i,j}$. In the following numerical example, we consider the case of Brownian motion and use $\tilde{K}_s^{t,x} = \EE\left[K_s^{t,x}\mid X_s^{t,x}\right], \hat{\tilde{K}}_s^{t,x} = \EE\left[\hat{K}_s^{t,x}\mid \hat{X}_s^{t,x}\right]$ instead of $K_s^{t,x}, \hat{K}_s^{t,x}$ to further reduce variance as discussed. In this case,
\begin{align*}
    &X_s^{t,x} = x + (W_s - W_t),\, \hat{X}_s^{t,x} = x - (W_s - W_t),\, \\
    &\tilde{K}_s^{t,x} = \hat{\tilde{K}}_s^{t,x} = \frac{1}{(s-t)^2}[(W_s-W_t)(W_s-W_t)\transpose-(s-t)\II_d].
\end{align*}

We then generate additional labels for the Hessian and incorporate them into our regression framework by augmenting the loss function. The modified loss now includes a term of the form $\frac{\mu}{d^2} |h_i - \nabla^2_x u_\theta(t_i, x_i)|^2$,
where \(\mu \ge 0\) is a hyperparameter that balances the importance of the Hessian term relative to the value and gradient terms. In doing so, the overall loss function becomes
\begin{equation}\label{eq:dpi_hess}
\begin{aligned}
\mathcal{L}_{\text{DPI}}(\theta) = & \frac{1}{N} \sum_{i=1}^N \left[ \left|y_i - u_\theta(t_i, {x_i})\right|^2 + \frac{\lambda}{d}\left|{z_i} - \nabla_{{x}} u_\theta(t_i, {x_i})\right|^2\right. \\
&\quad\quad\quad\quad +\left. \frac{\mu}{d^2}\left|{h_i} - \nabla^2_{{x}} u_\theta(t_i, {x_i})\right|^2\right].
\end{aligned}
\end{equation}
This extension leverages both first and second-order information during supervised learning, resulting in a more robust approximation of the underlying function.

Here we conduct experiments on the fully nonlinear problem in Section~\ref{sec:fully} to evaluate the impact of incorporating a Hessian term into the loss function. By adjusting the magnitude of the parameter \(\mu\), we investigate the effect of different strengths on the Hessian term. For simplicity, all experiments are performed under the PDE parameters corresponding to ``Case I" in the main text. The relative mean absolute error of the Hessian (h-rMAE) is defined similarly to the previously computed rMAE and g‑rMAE as follows:
\[
    \operatorname{h-rMAE} = \frac{1}{d^2} \sum_{k=1}^{d} \sum_{j=1}^{d} \frac{\displaystyle{\sum_i\left|\partial_{x_j}\partial_{x_k} u_\theta(t_i, X_{t_i}) - \partial_{x_j}\partial_{x_k} u^*(t_i, X_{t_i})\right|}}{\displaystyle{\sum_i|\partial_{x_j} u^*(t_i, X_{t_i})|}}.
\]

The experimental results, shown in Figure~\ref{fig:hess}, display the relative errors of the value, gradient, and Hessian for different \(\mu\) under a fixed gradient parameter \(\lambda\) in each column. The left column corresponds to \(\lambda=0.1\), while the right column corresponds to \(\lambda=1.0\). When \(\lambda\) is small (for example, \(\lambda=0.1\)), the insufficient supervision constraint on the gradient results in overall performance that is slightly inferior to the case when \(\lambda=1.0\). In such cases, appropriately increasing the penalty weight on the Hessian can significantly improve the rMAE, reflecting the importance of Hessian information in capturing local curvature. However, when \(\lambda\) is set to an appropriate value such that rMAE is already very low, further increasing the penalty weight for the Hessian term mainly reduces the relative error of the Hessian itself, with only limited improvements in the relative errors of the value and gradient. Overall, when gradient supervision is sufficient, the additional Hessian penalty does not substantially enhance the overall results; its primary contribution lies in improving the accuracy of the Hessian estimation, thereby providing the model with more detailed second-order information.

\begin{figure}[!htb]
    \centering
    \begin{subfigure}[b]{0.49\textwidth}
        \centering
\includegraphics[width=\textwidth]{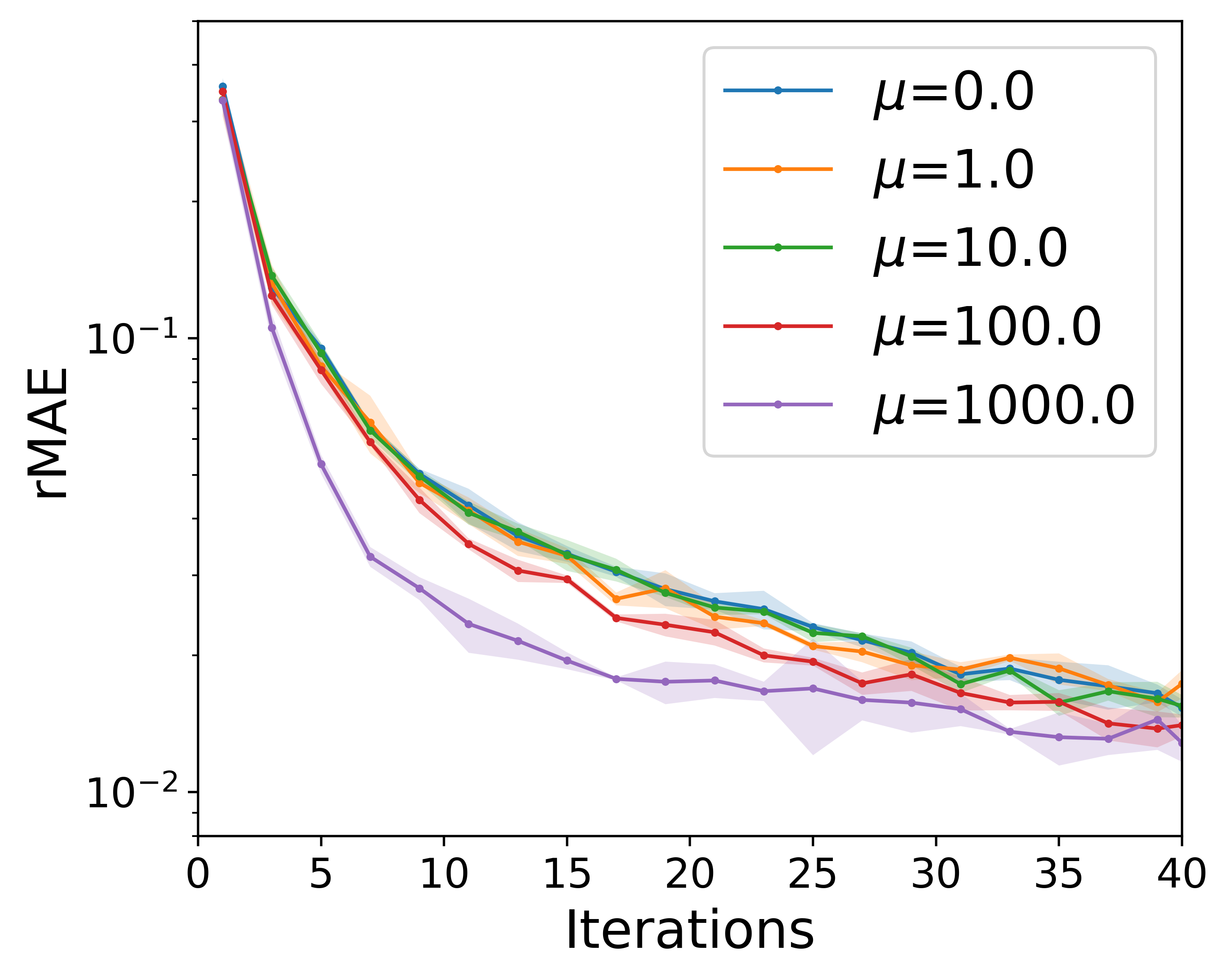}
    \end{subfigure}
    \hfill
    \begin{subfigure}[b]{0.49\textwidth}
        \centering
        \includegraphics[width=\textwidth]{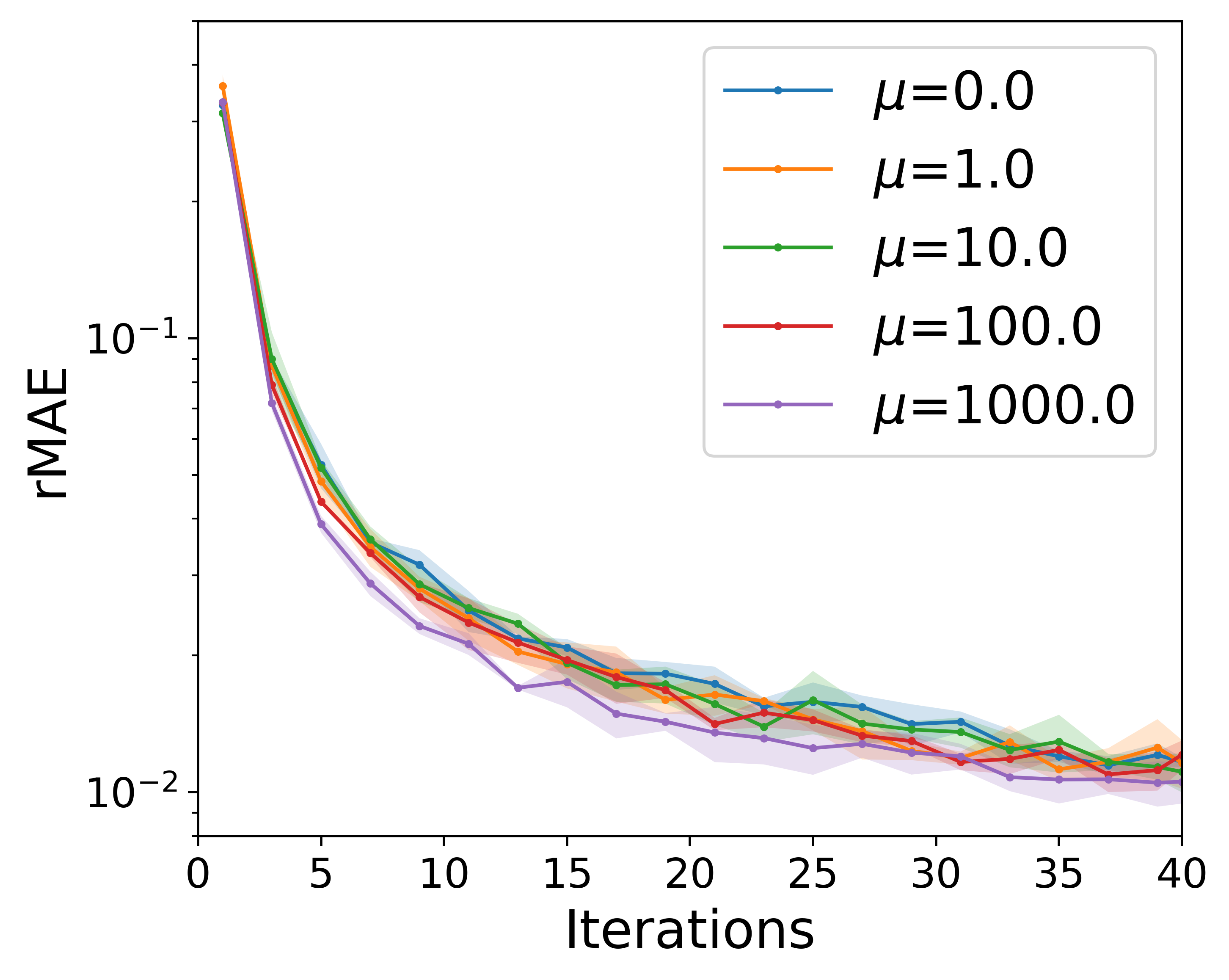}
    \end{subfigure}

        \begin{subfigure}[b]{0.49\textwidth}
        \centering
\includegraphics[width=\textwidth]{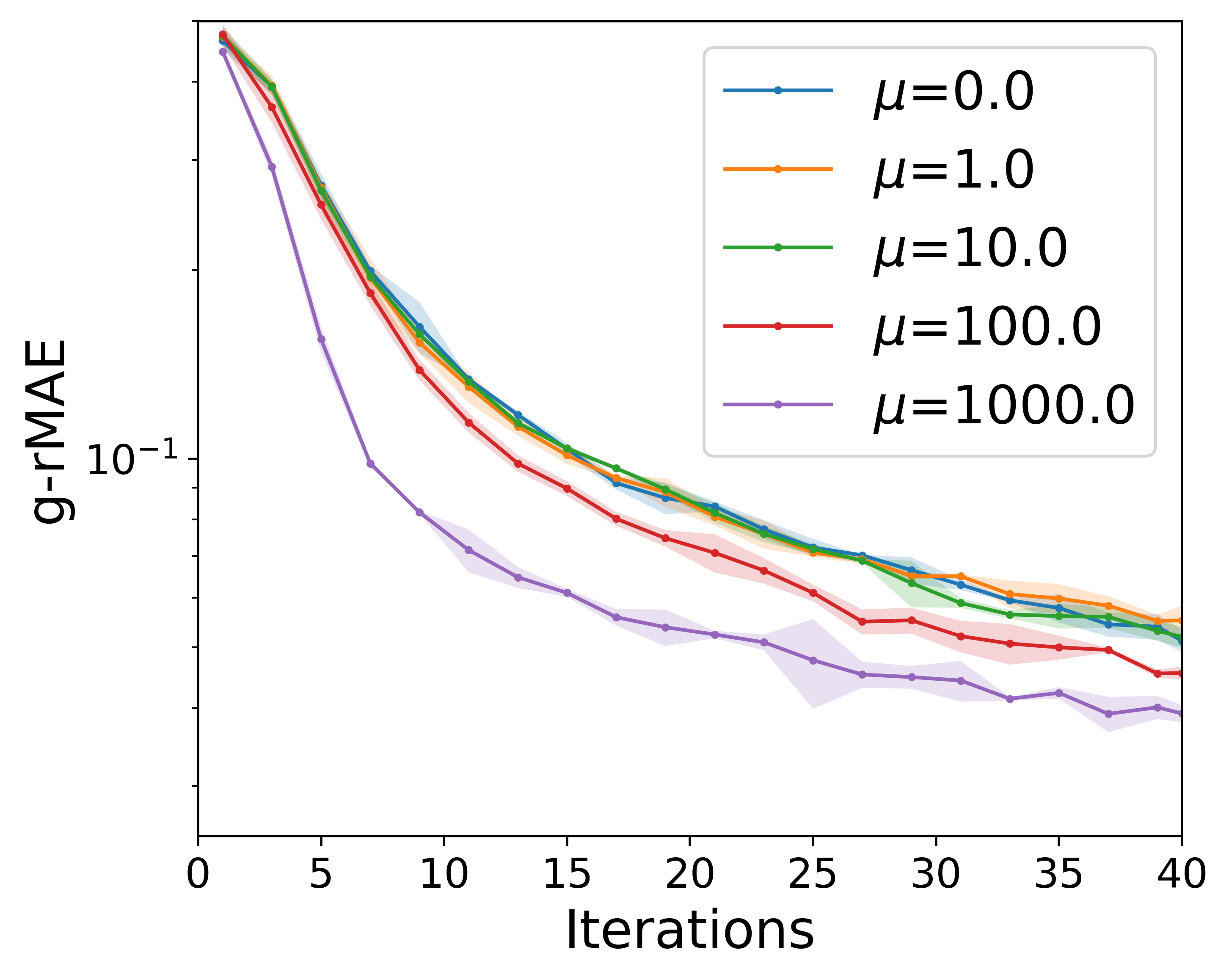}
    \end{subfigure}
    \hfill
    \begin{subfigure}[b]{0.49\textwidth}
        \centering
        \includegraphics[width=\textwidth]{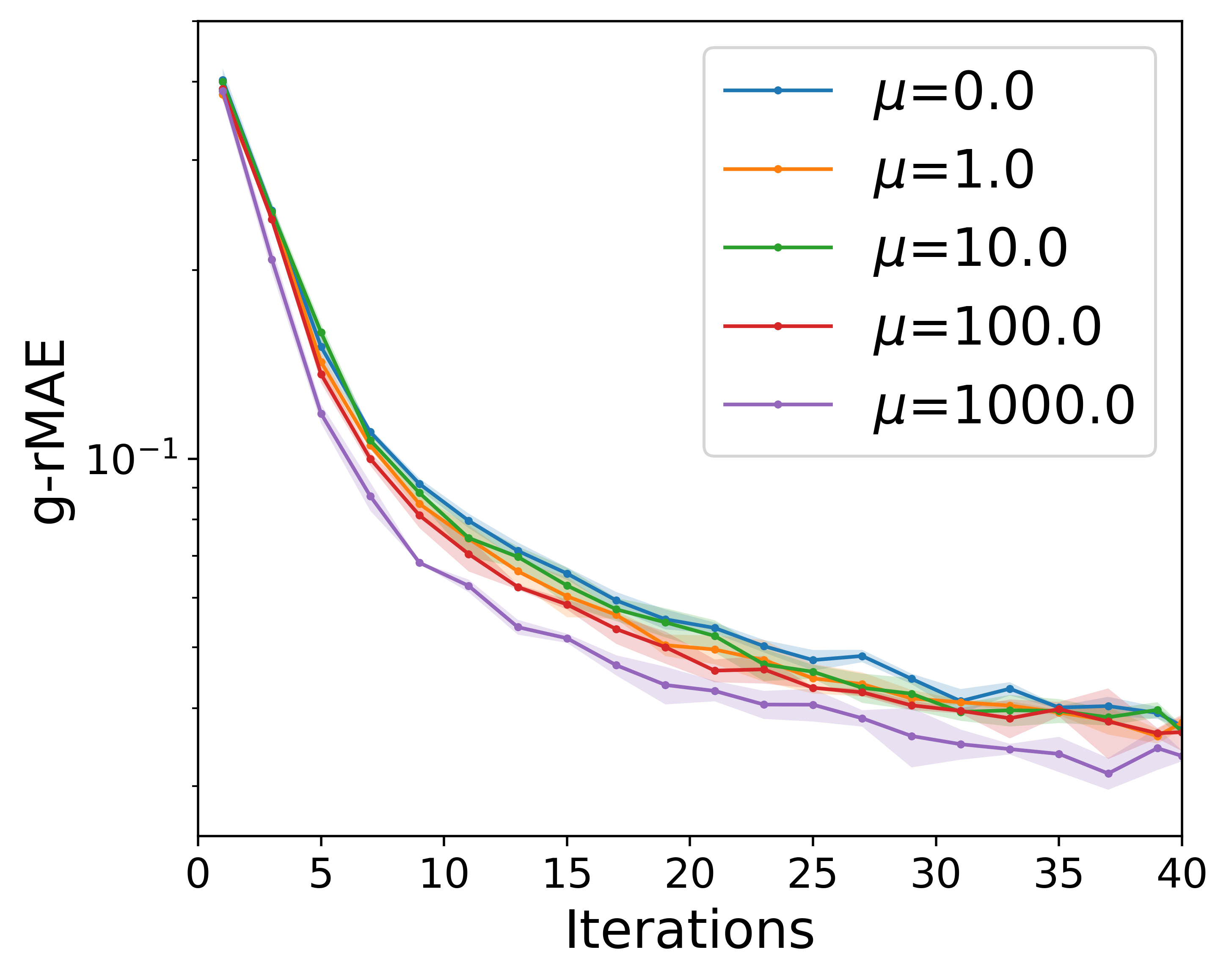}
    \end{subfigure}

        \begin{subfigure}[b]{0.49\textwidth}
        \centering
\includegraphics[width=\textwidth]{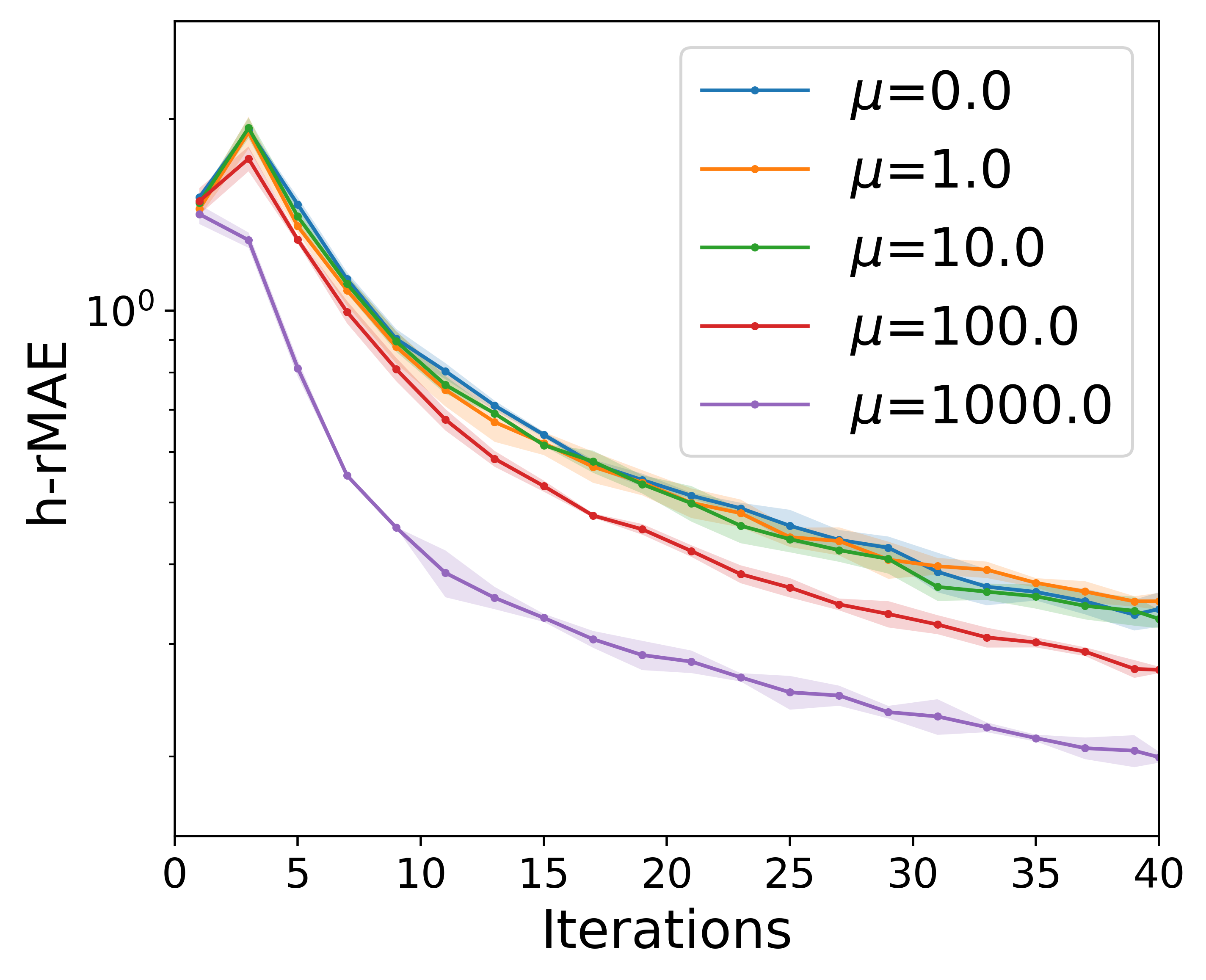}
    \end{subfigure}
    \hfill
    \begin{subfigure}[b]{0.49\textwidth}
        \centering
        \includegraphics[width=\textwidth]{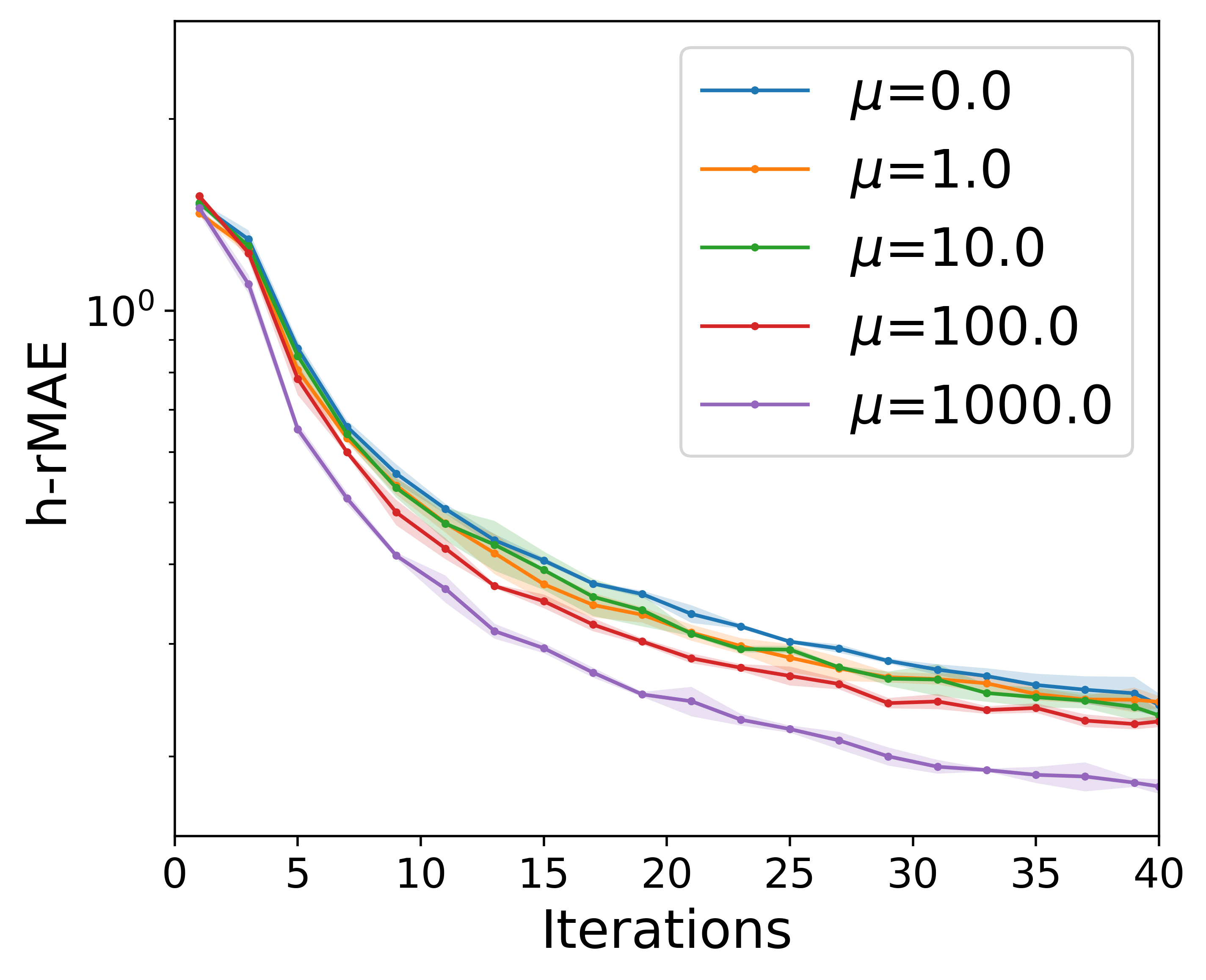}
    \end{subfigure}
    \caption{In the fully nonlinear problem ~\eqref{eq:gbm}, the relative errors for \( u \), \( \nabla u \), and \( \nabla^2 u \) are compared for different Hessian weights \( \mu \) using the DPI loss function~\eqref{eq:dpi_hess} including the Hessian term. The left column represents the setting with a gradient weight of \( \lambda=0.1 \), while the right column represents the setting with \( \lambda=1.0 \).}
    \label{fig:hess}
\end{figure}

\section{Test Error Curves During Training}
\label{appendix:errorcurve}

\begin{figure}[!htb]
    \centering
    \begin{subfigure}[b]{0.49\textwidth}
        \centering
\includegraphics[width=\textwidth]{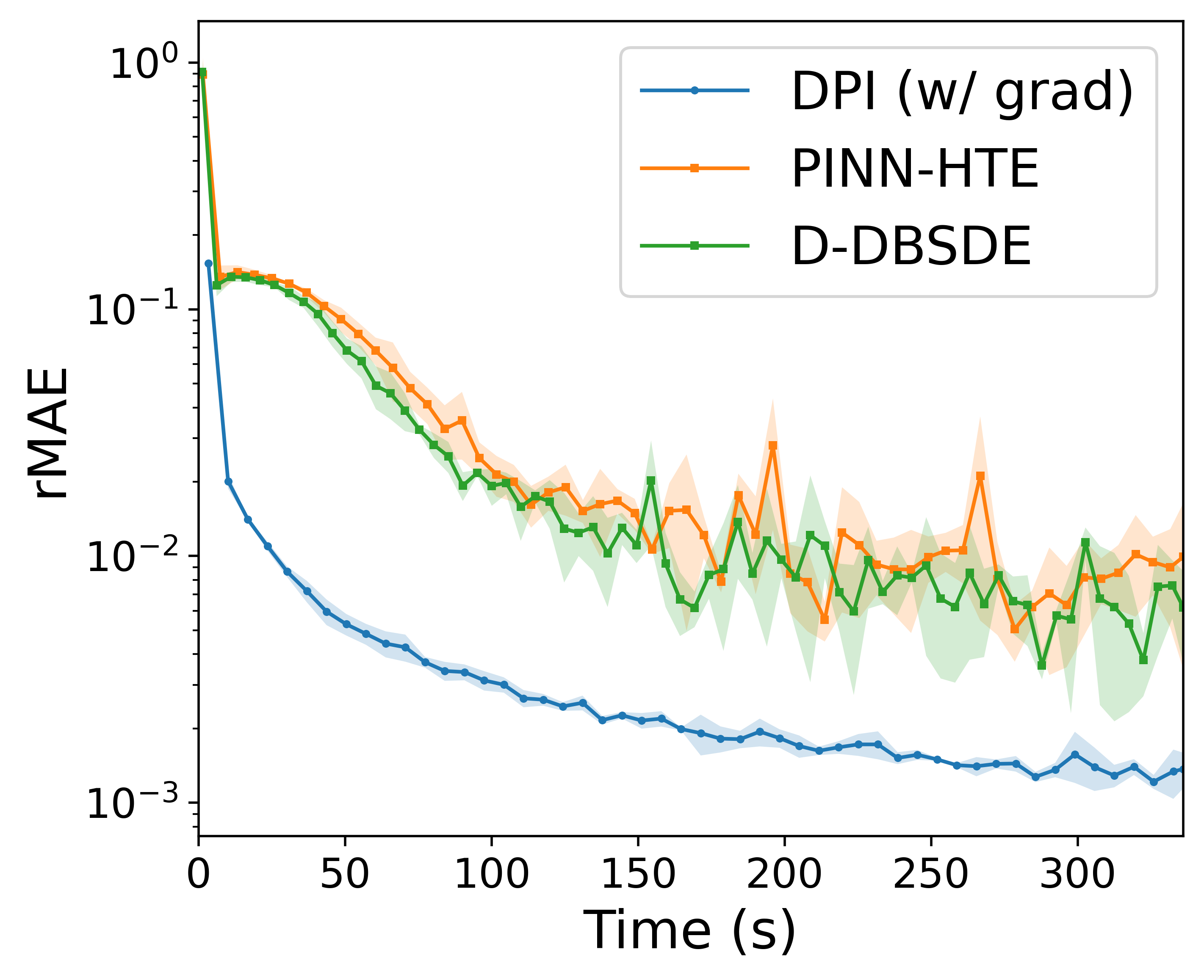}
    \end{subfigure}
    \hfill
    \begin{subfigure}[b]{0.49\textwidth}
        \centering
        \includegraphics[width=\textwidth]{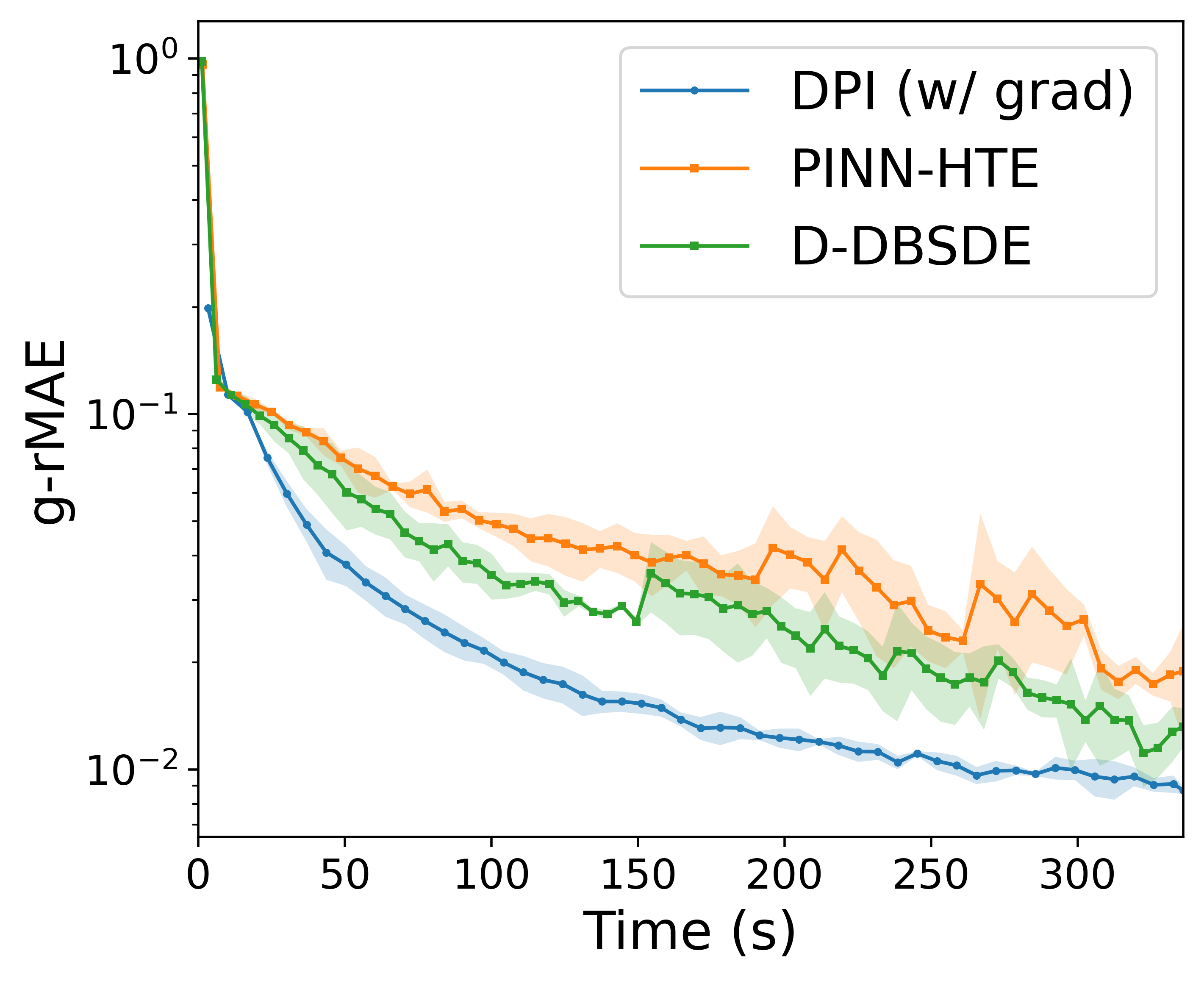}
    \end{subfigure}
    \begin{subfigure}[b]{0.49\textwidth}
        \centering
\includegraphics[width=\textwidth]{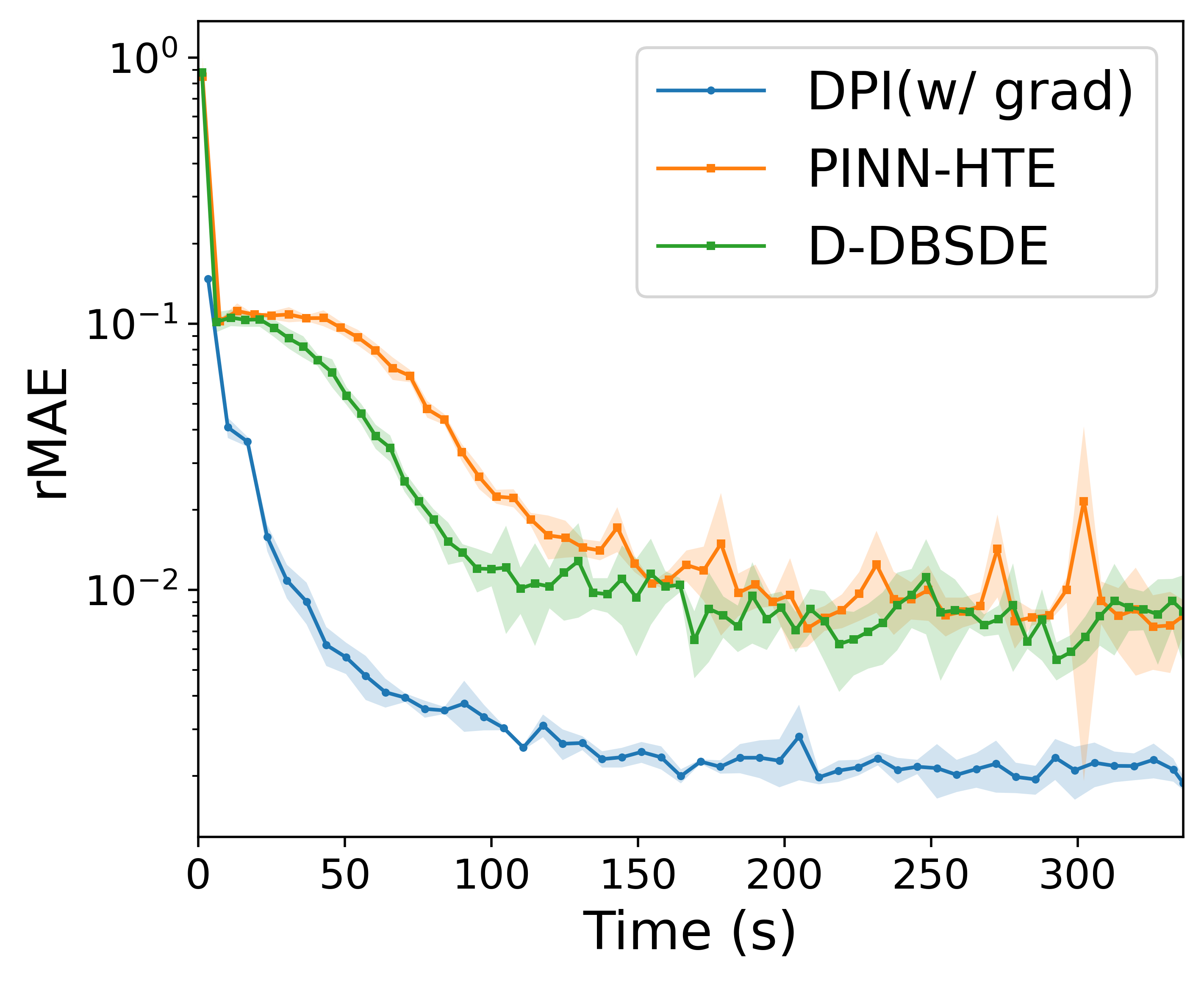}
    \end{subfigure}
    \hfill
    \begin{subfigure}[b]{0.49\textwidth}
        \centering
        \includegraphics[width=\textwidth]{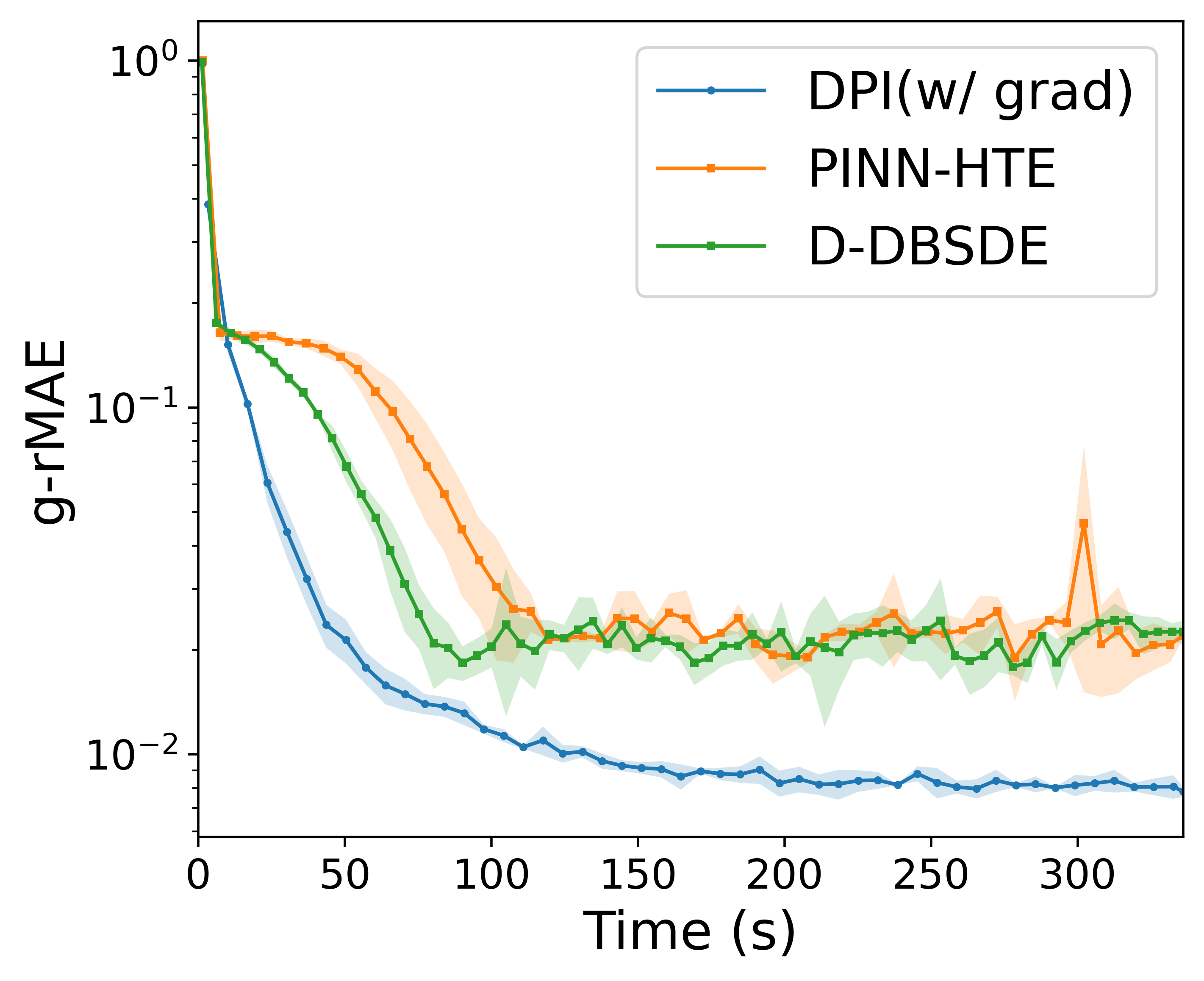}
    \end{subfigure}

    \begin{subfigure}[b]{0.49\textwidth}
        \centering
\includegraphics[width=\textwidth]{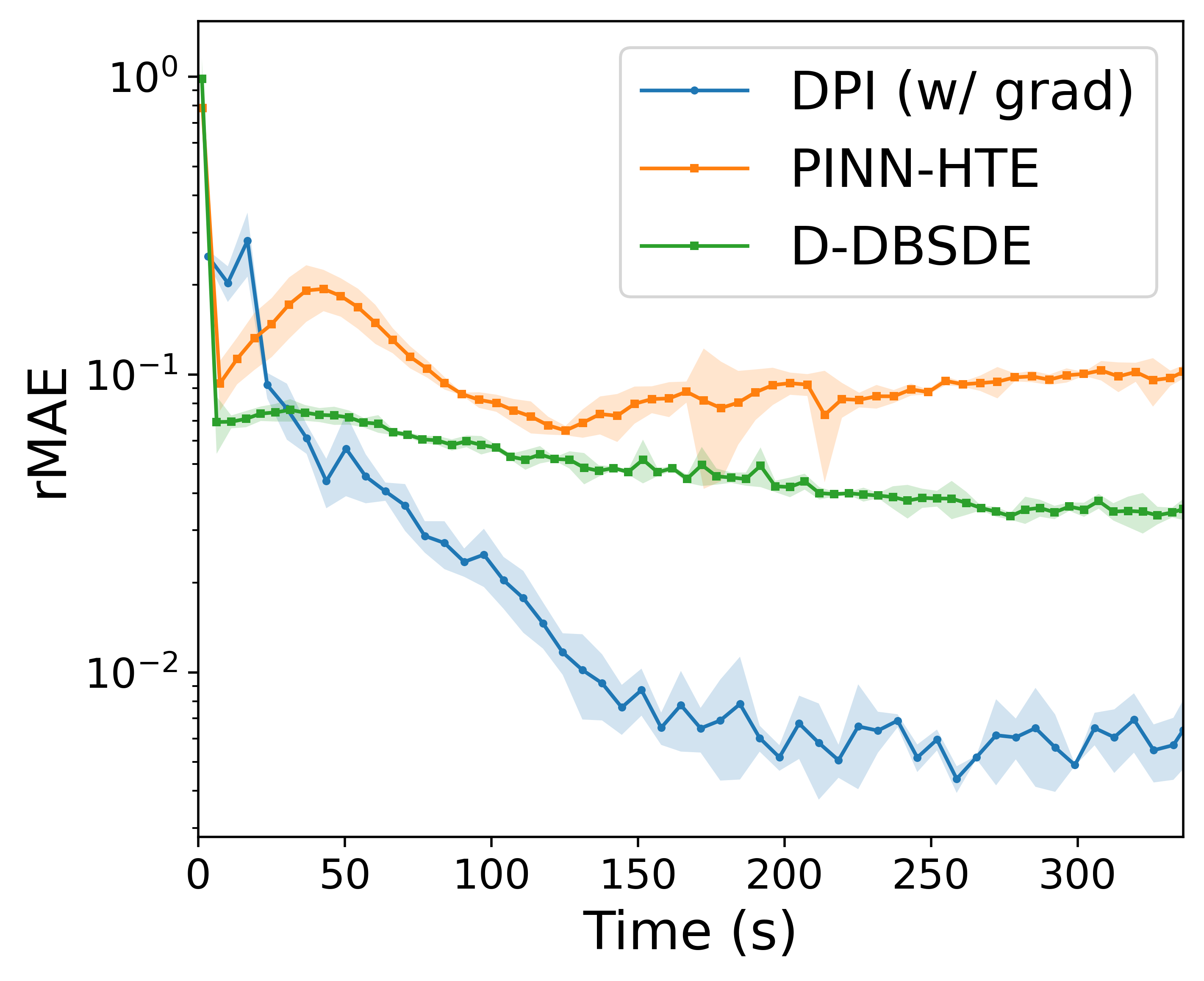}
    \end{subfigure}
    \hfill
    \begin{subfigure}[b]{0.49\textwidth}
        \centering
        \includegraphics[width=\textwidth]{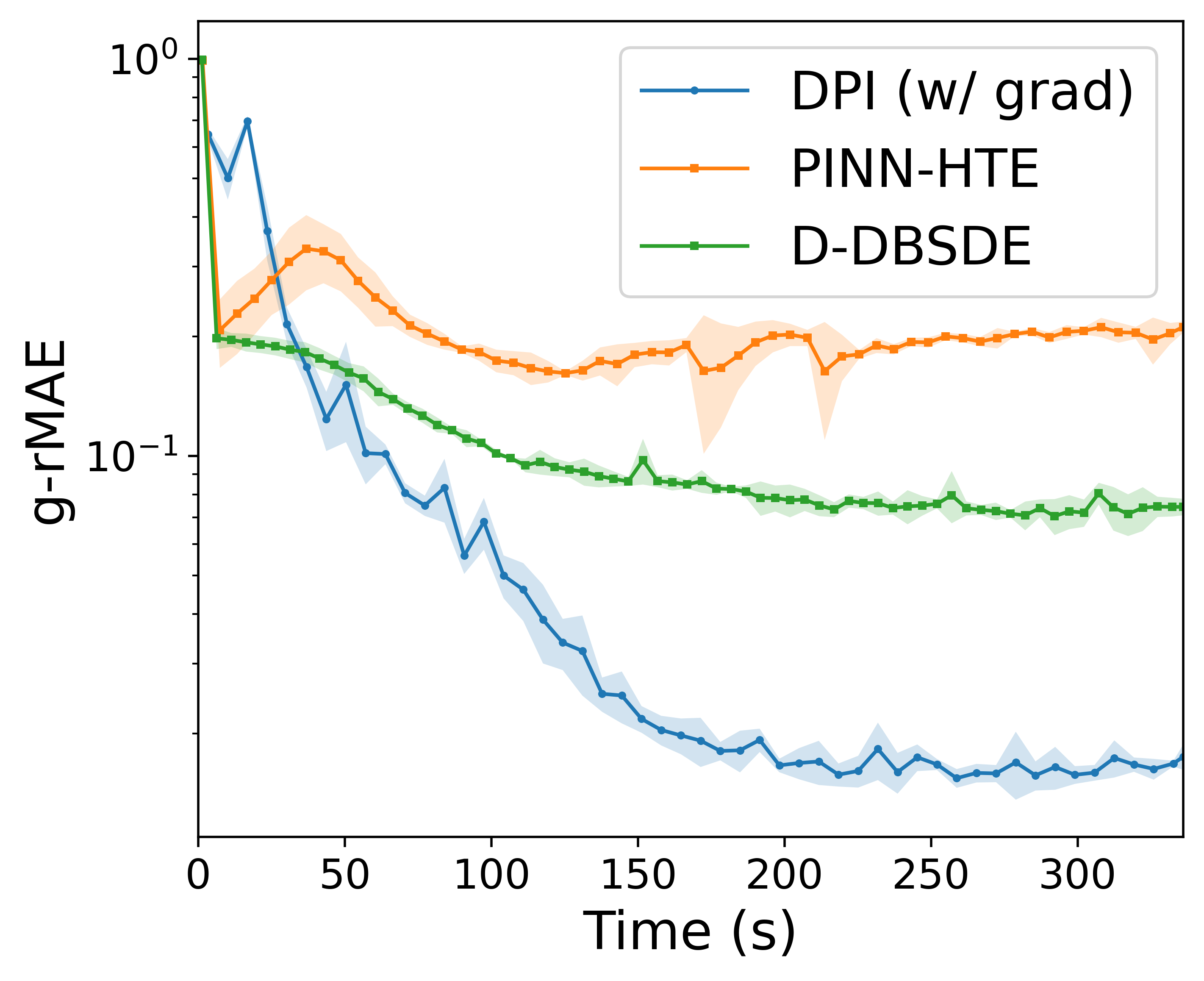}
    \end{subfigure}
\caption{The relative error curves for \( u \) and \( \nabla u \) during training of DPI, PINN-HTE and D-DBSDE for the Burgers-type problem in Section \ref{sec:cha}. From top to bottom, the results correspond to $\kappa=1.0, 2.5, 5.0$.}
    \label{fig:burgers_more}
\end{figure}

\begin{figure}[!htb]
    \centering
    \begin{subfigure}[b]{0.49\textwidth}
        \centering
\includegraphics[width=\textwidth]{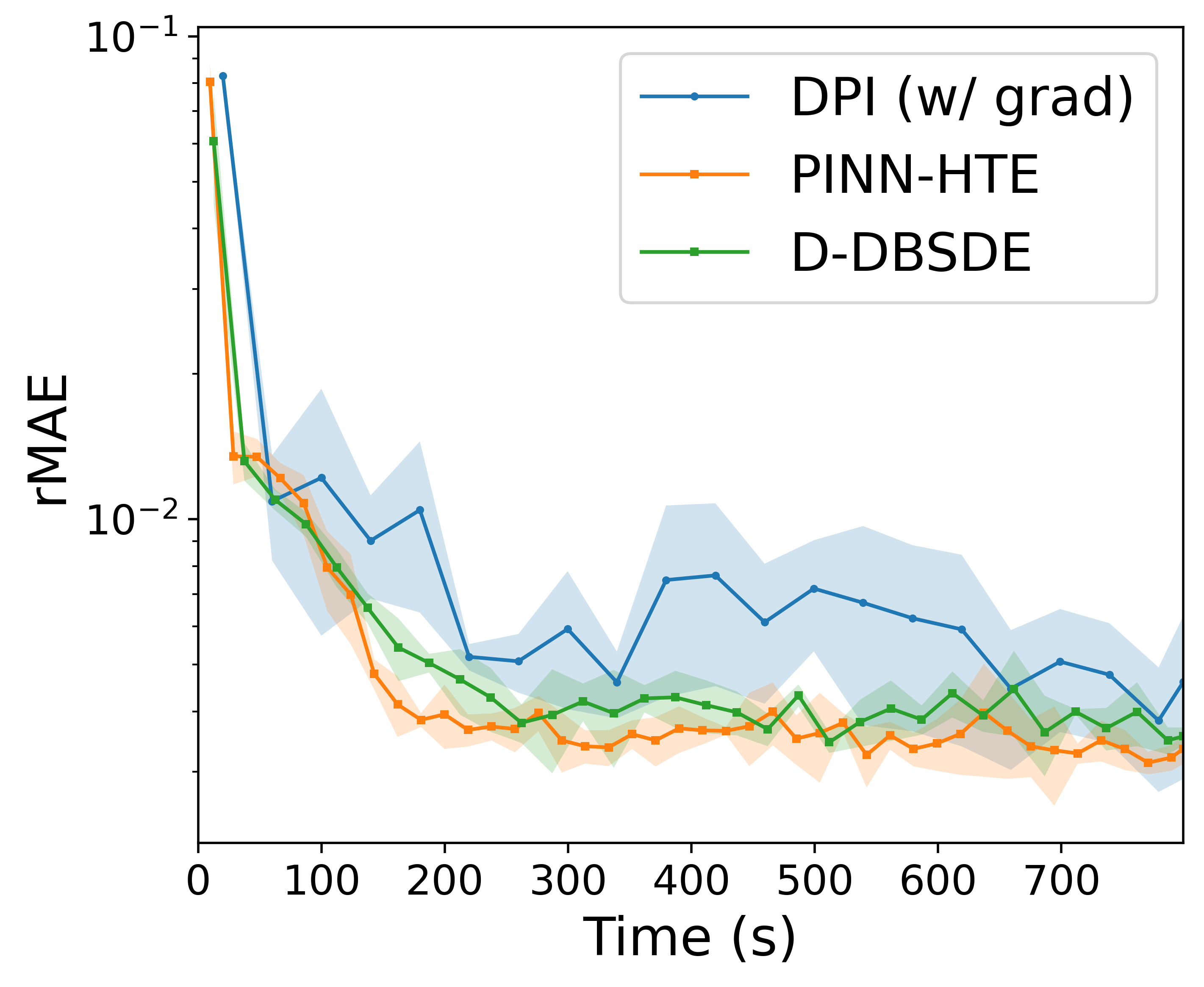}
    \end{subfigure}
    \hfill
    \begin{subfigure}[b]{0.49\textwidth}
        \centering
        \includegraphics[width=\textwidth]{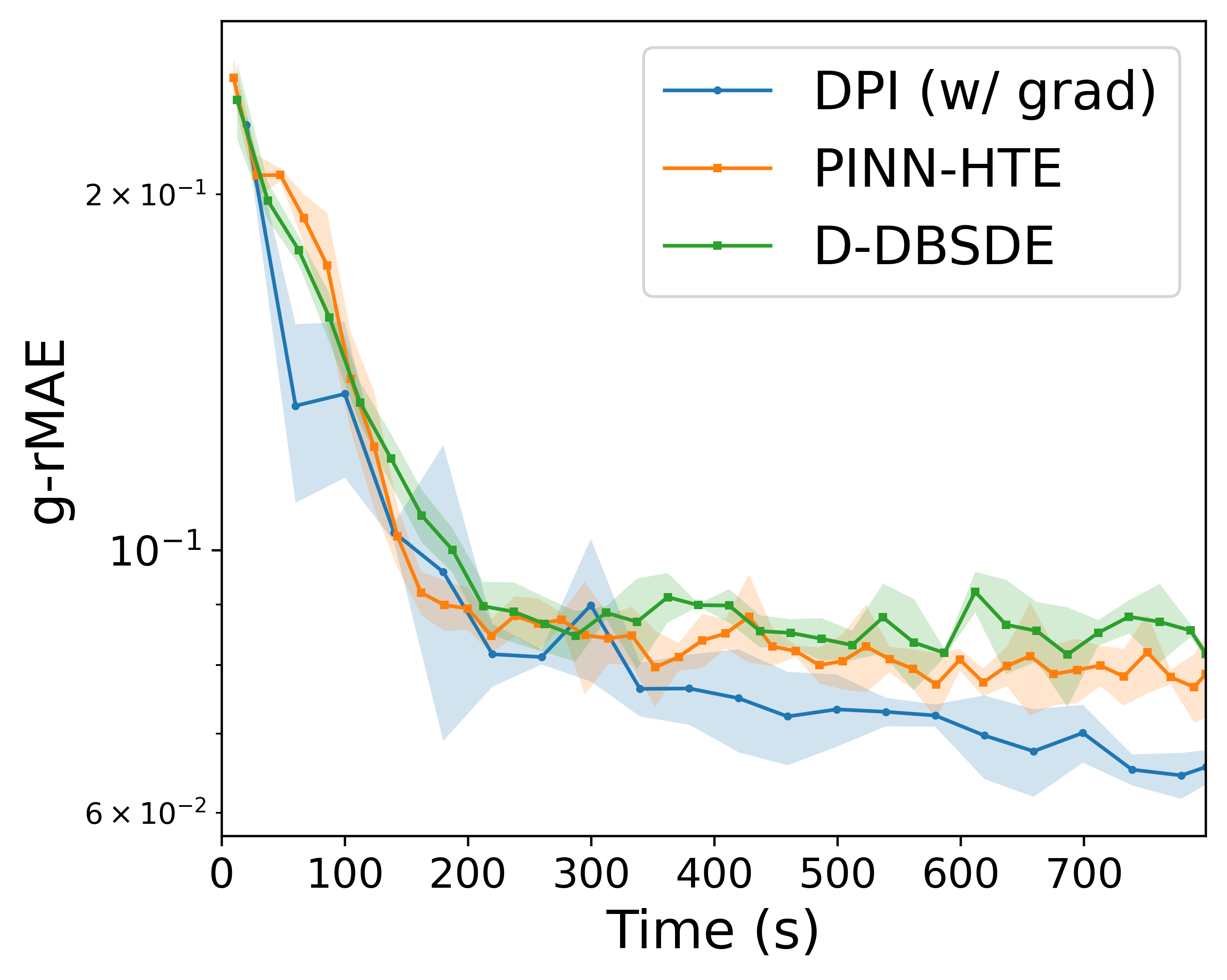}
    \end{subfigure}
    \begin{subfigure}[b]{0.49\textwidth}
        \centering
\includegraphics[width=\textwidth]{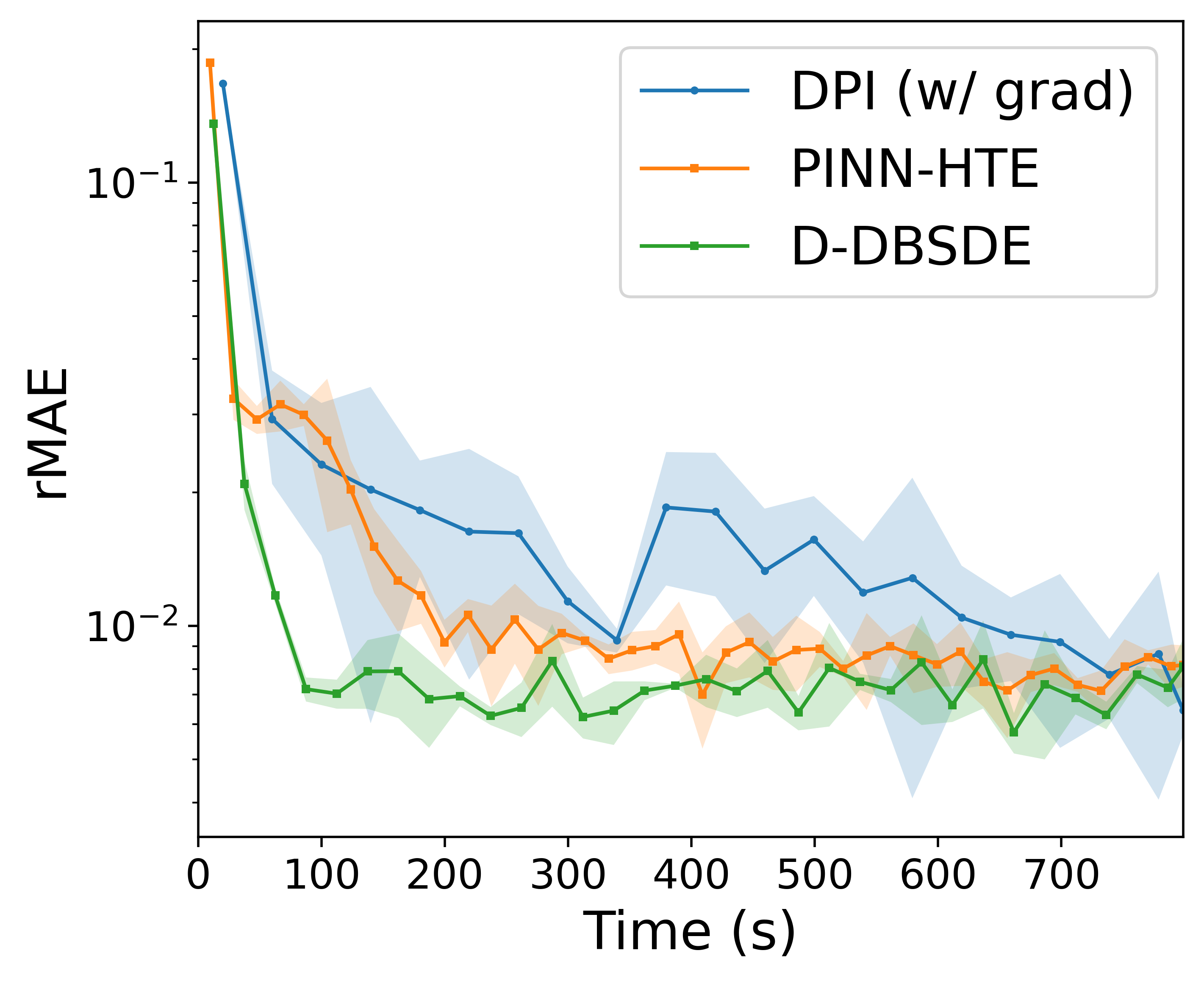}
    \end{subfigure}
    \hfill
    \begin{subfigure}[b]{0.49\textwidth}
        \centering
        \includegraphics[width=\textwidth]{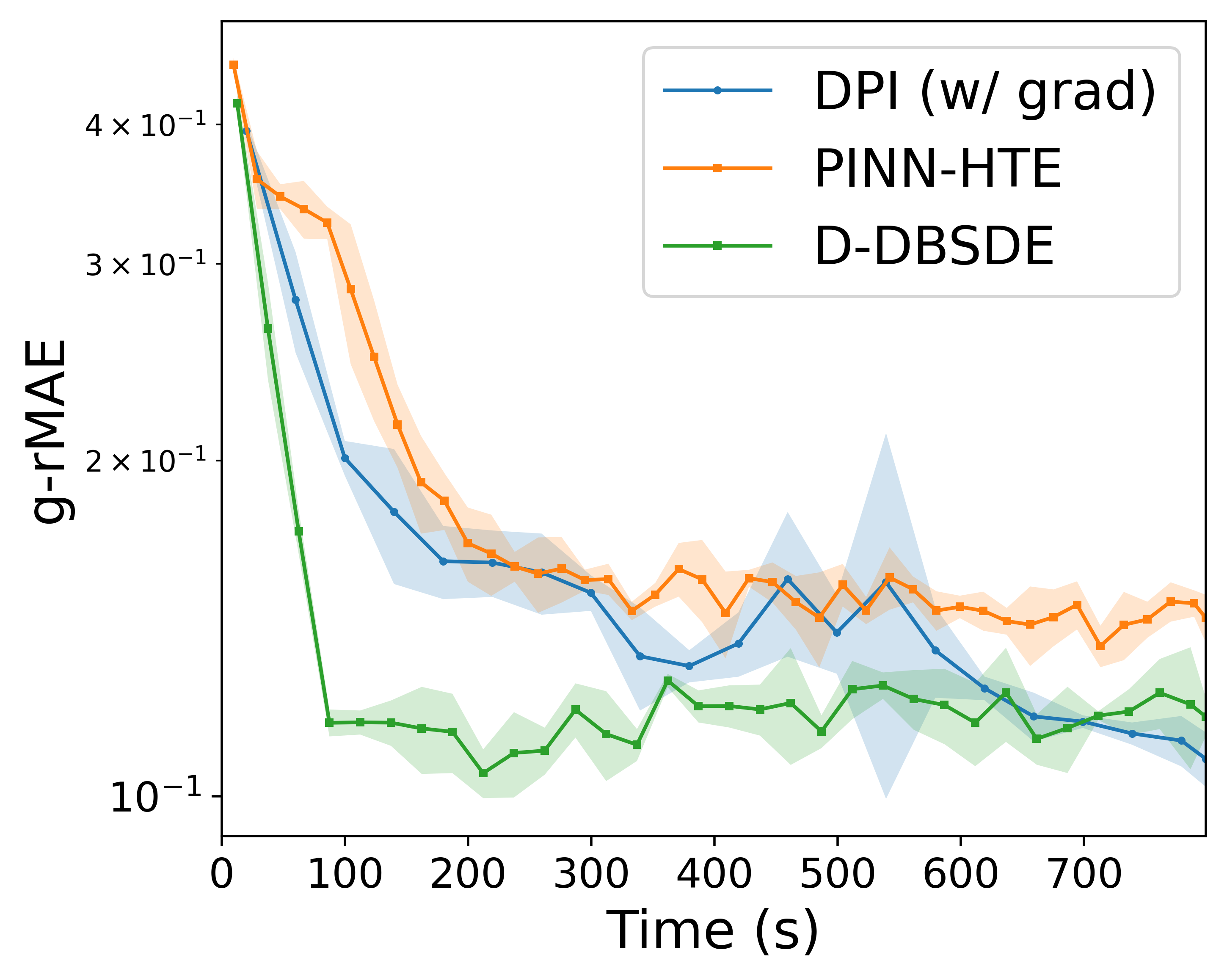}
    \end{subfigure}

    \begin{subfigure}[b]{0.49\textwidth}
        \centering
\includegraphics[width=\textwidth]{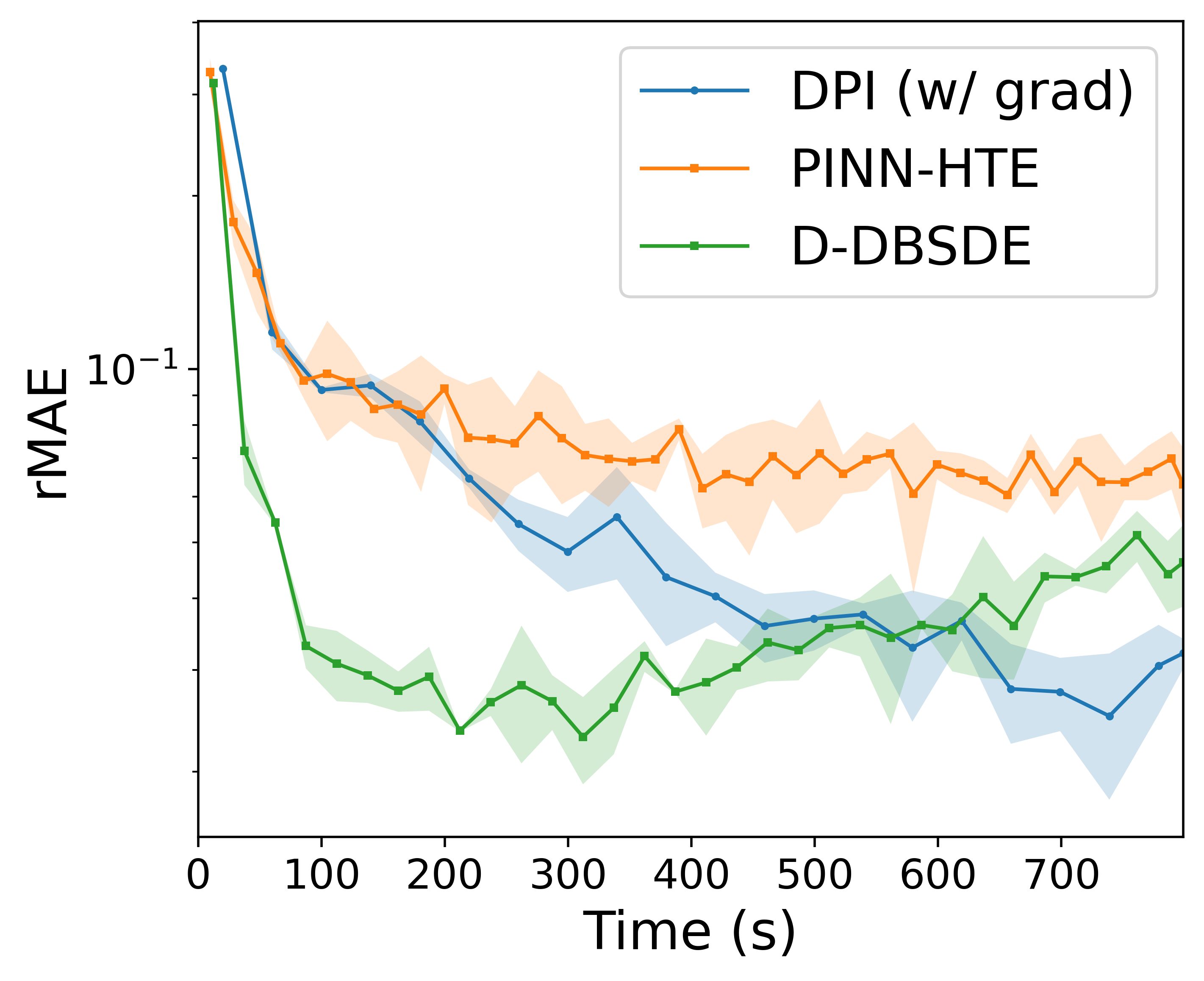}
    \end{subfigure}
    \hfill
    \begin{subfigure}[b]{0.49\textwidth}
        \centering
        \includegraphics[width=\textwidth]{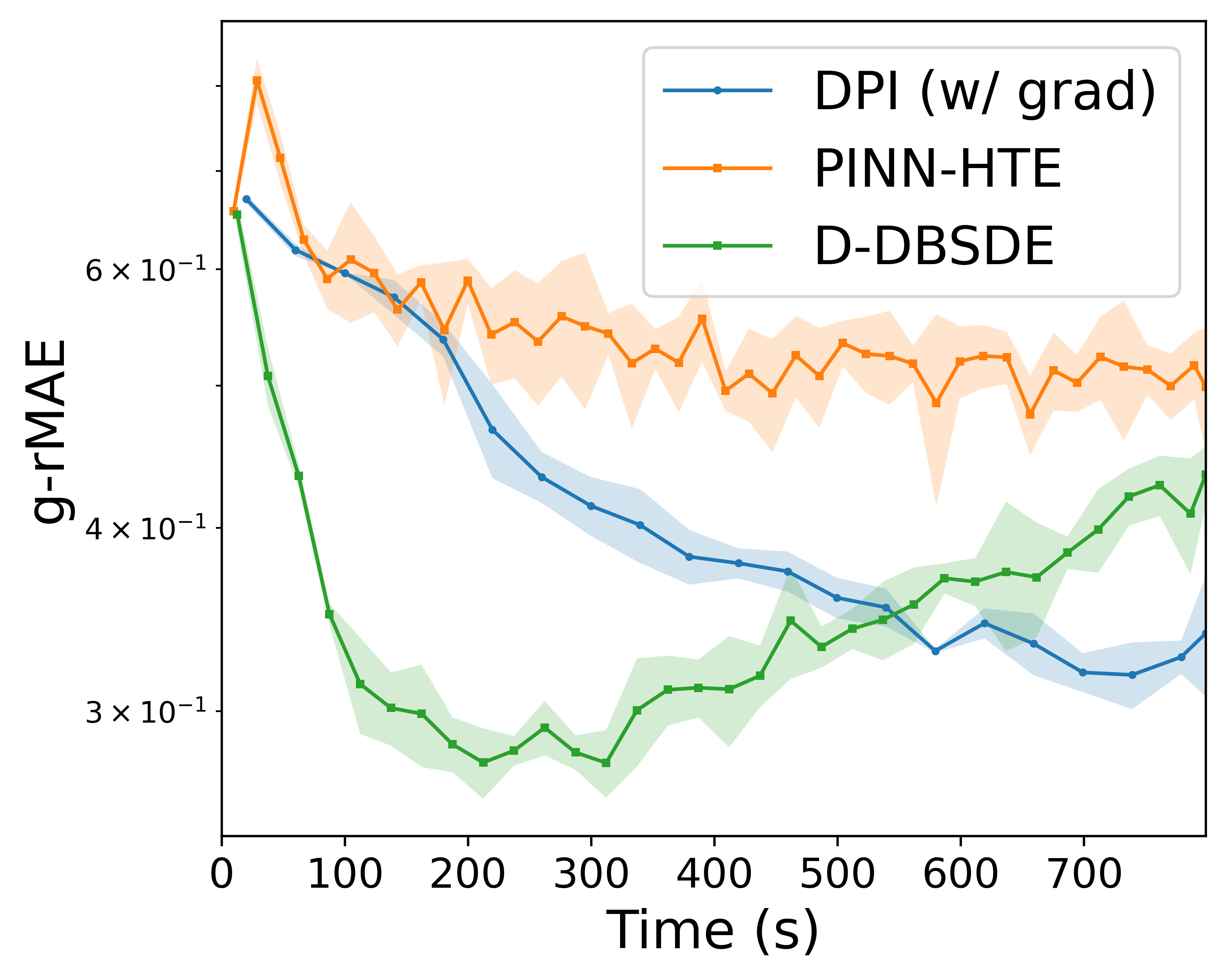}
    \end{subfigure}
\caption{The relative error curves for \( u \) and \( \nabla u \) during training of DPI, PINN-HTE and D-DBSDE for the HJB problem in Section \ref{sec:hjb}. From top to bottom, the results correspond to $T=0.25, 0.5, 1.0$.}
    \label{fig:hjb}
\end{figure}

\begin{figure}[!htb]
    \centering
    \begin{subfigure}[b]{0.49\textwidth}
        \centering
\includegraphics[width=\textwidth]{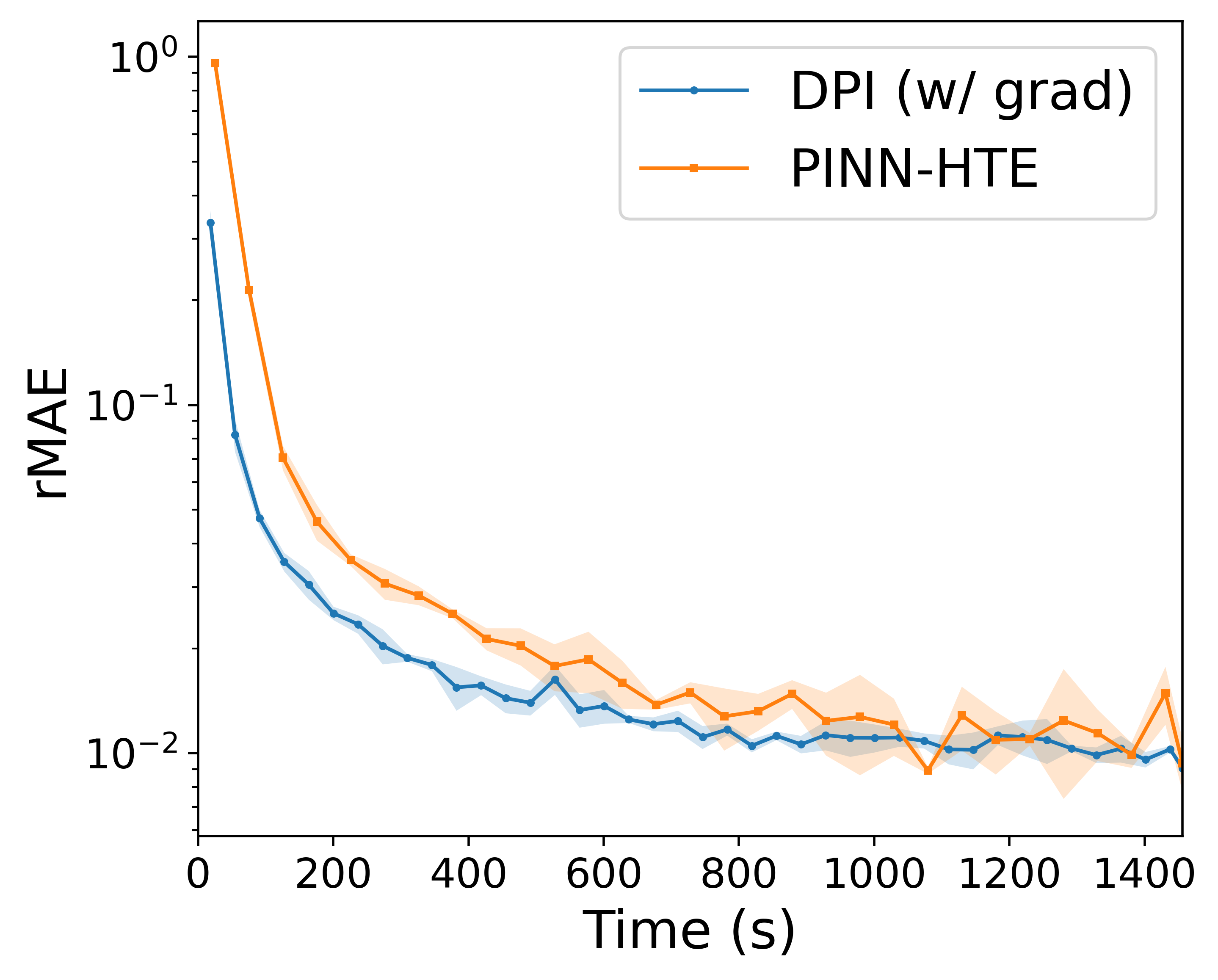}
    \end{subfigure}
    \hfill
    \begin{subfigure}[b]{0.49\textwidth}
        \centering
        \includegraphics[width=\textwidth]{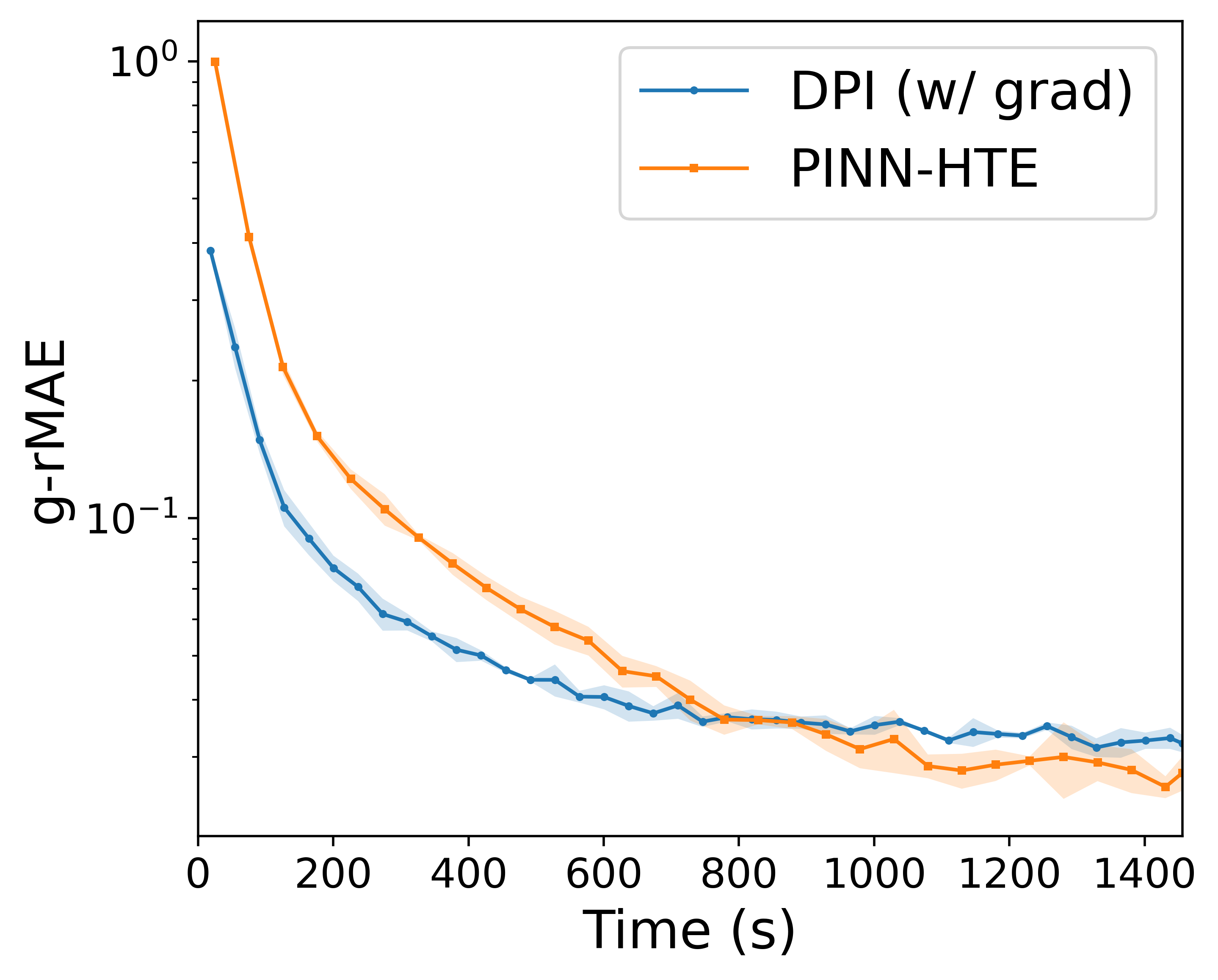}
    \end{subfigure}
    \begin{subfigure}[b]{0.49\textwidth}
        \centering
\includegraphics[width=\textwidth]{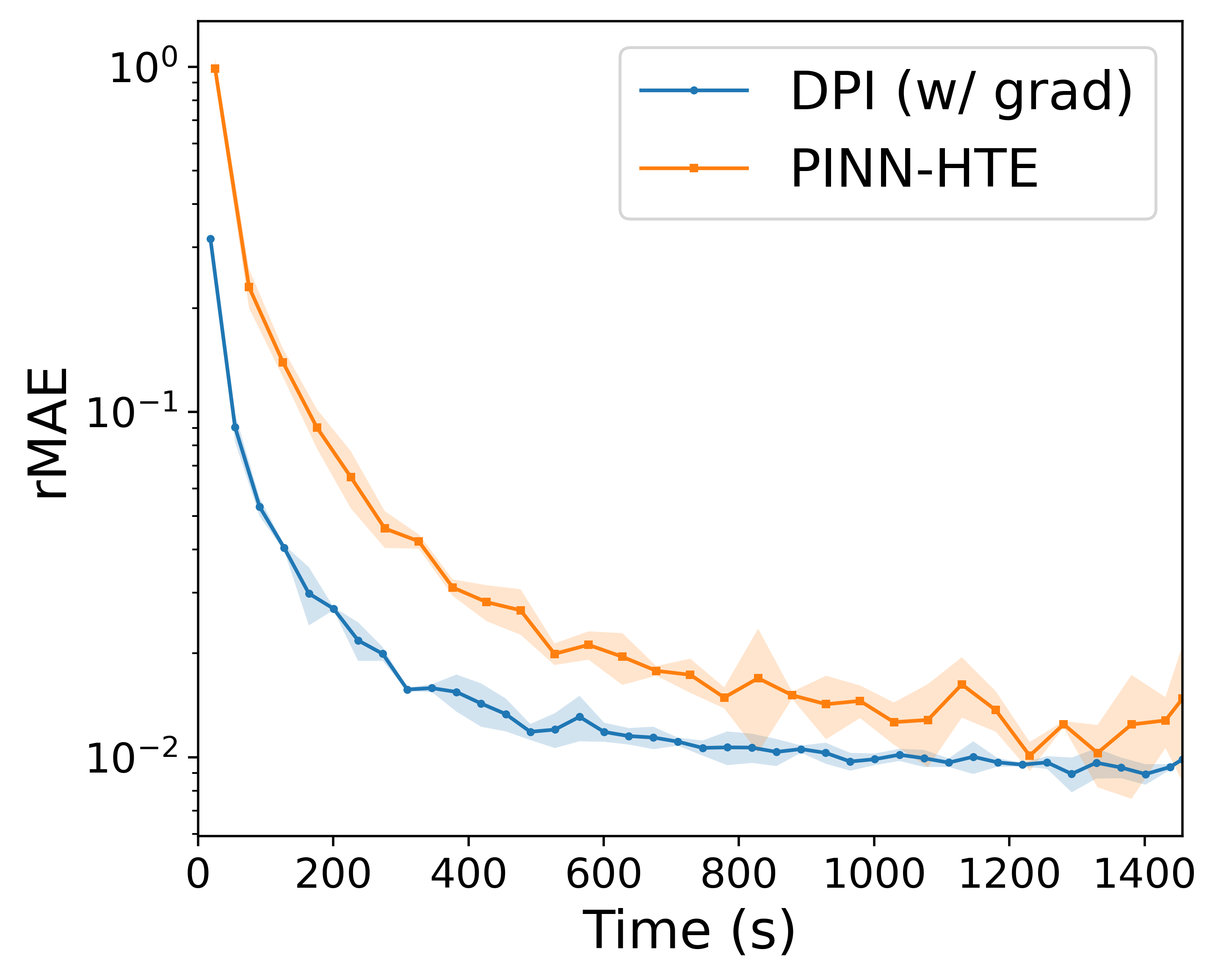}
    \end{subfigure}
    \hfill
    \begin{subfigure}[b]{0.49\textwidth}
        \centering
        \includegraphics[width=\textwidth]{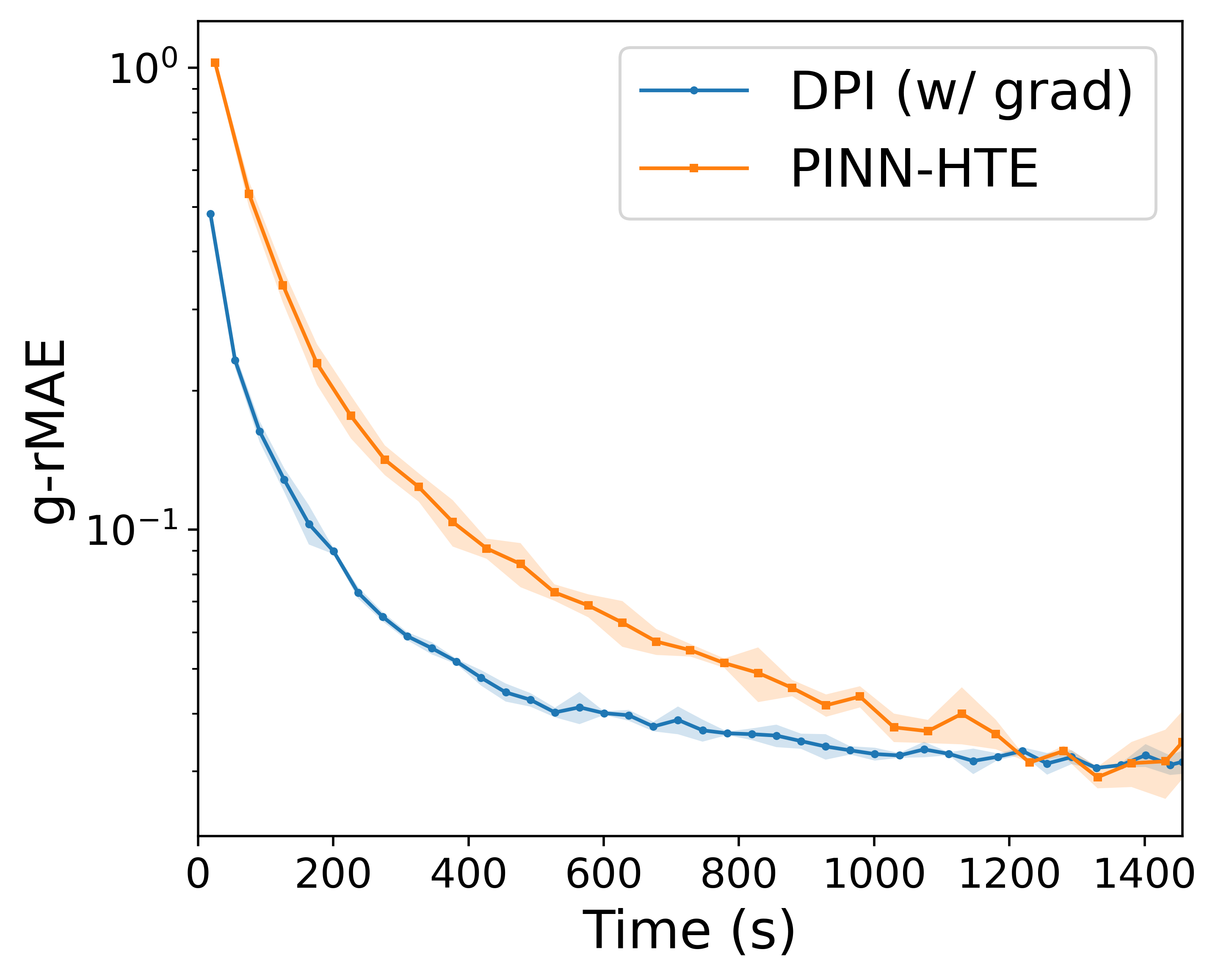}
    \end{subfigure}

    \begin{subfigure}[b]{0.49\textwidth}
        \centering
\includegraphics[width=\textwidth]{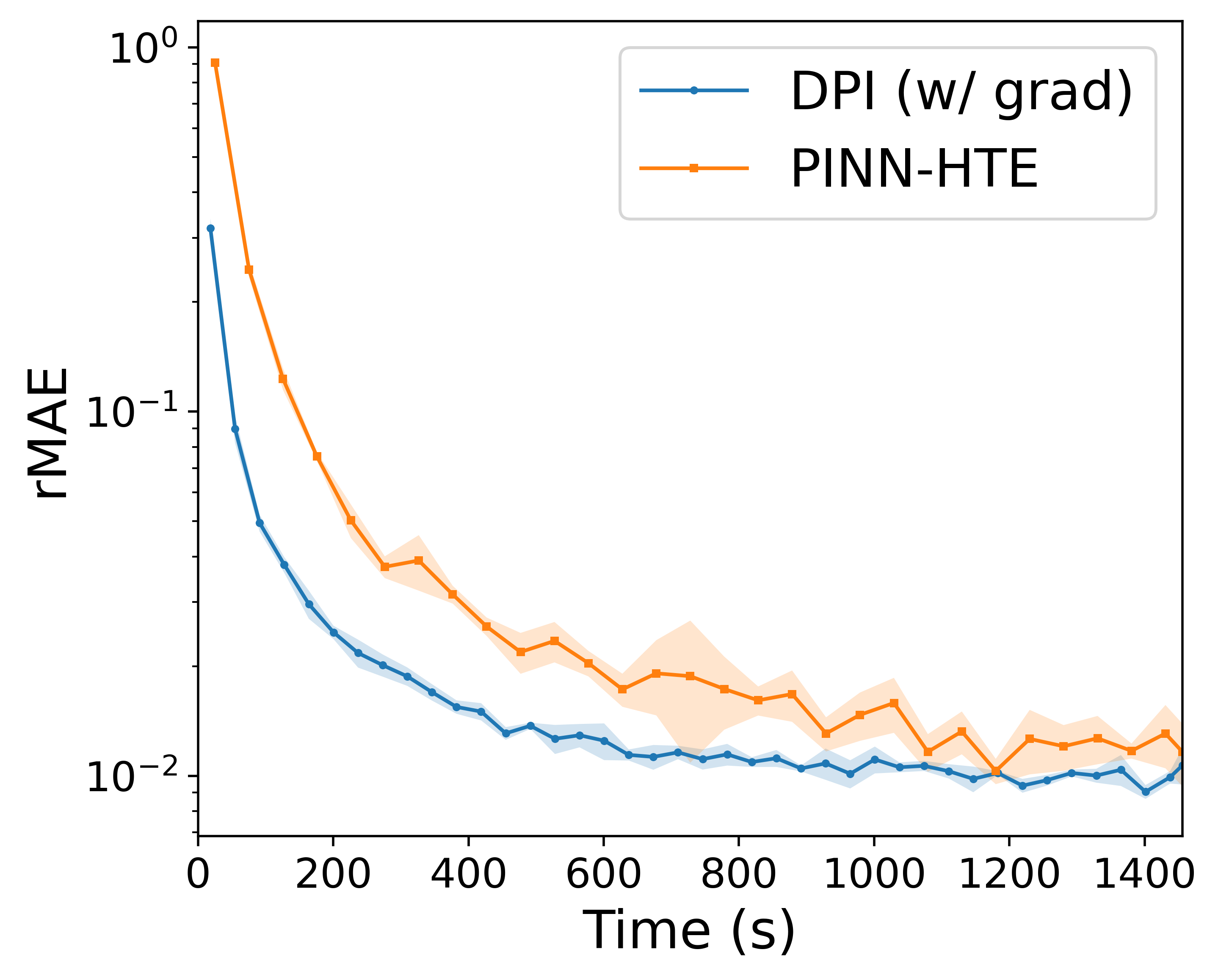}
    \end{subfigure}
    \hfill
    \begin{subfigure}[b]{0.49\textwidth}
        \centering
        \includegraphics[width=\textwidth]{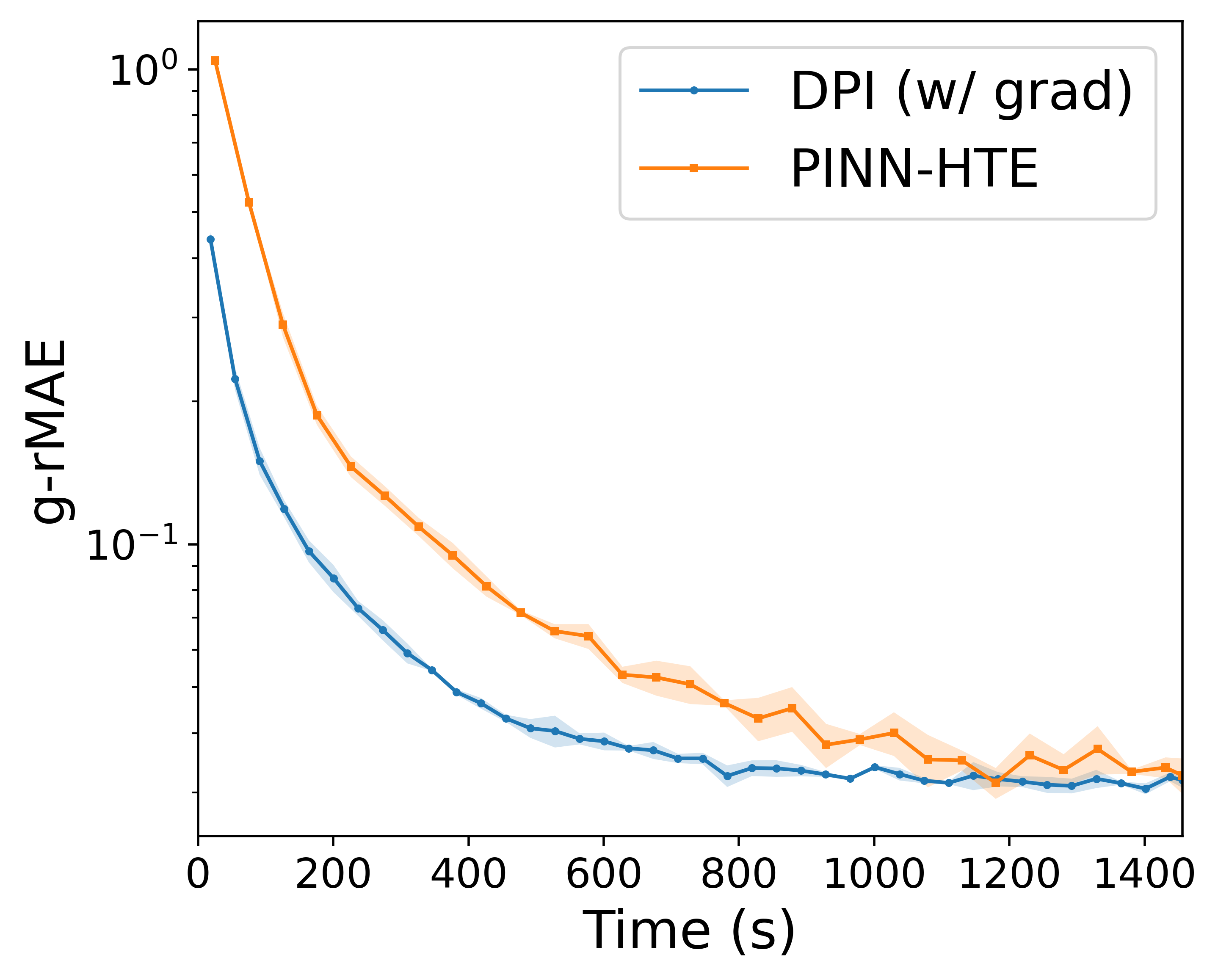}
    \end{subfigure}
\caption{The relative error curves for \( u \) and \( \nabla u \) during training of DPI and PINN-HTE for fully nonlinear problem in Section \ref{sec:fully}. From top to bottom, the results correspond to Case I, II, and III.}
    \label{fig:fully_nonlinear}
\end{figure}

We plot the curves of rMAE and g-rMAE at each checkpoint for different methods during the training process. As shown in Figure~\ref{fig:burgers_more} for the Burgers-type problem in Section \ref{sec:cha}, the results clearly indicate that DPI consistently reduces errors with extended training, whereas D-DBSDE and PINN-HTE frequently become trapped in local minima. Similar behaviors are observed in Figures~\ref{fig:hjb} and \ref{fig:fully_nonlinear} for the HJB and fully nonlinear problems, respectively, further highlighting the robustness and superiority of the DPI method. We emphasize that for the HJB problem with $T=0.25, 0.5$, results are similar as the problems with these shorter time horizons are relatively easy to optimize for all methods. However, at $T=1.0$, PINN-HTE and D-DBSDE exhibit instability due to increased difficulty of the problem, while DPI maintains consistently decreasing errors throughout the training and demonstrates stable and superior performance. We also note that DBDP proceeds backward in time, solving the problem one step at a time, which makes it incompatible with computing solution errors over the entire time interval as done for the other methods. Therefore, only the results for DPI and PINN-HTE are reported in Figure~\ref{fig:fully_nonlinear}.

The error curves during training further confirm that DPI is notably more robust to varying hyperparameter choices than PINNs or DBSDEs. To illustrate this, we use the Burgers-type PDE with \(\kappa = 5.0\) as an example. Specifically, the test error curves shown in Figure \ref{fig:unstable_pinn} for PINN-HTE and Figure \ref{fig:unstable_dbsde} for D-DBSDE demonstrate how the terminal weight \(\lambda_T\) significantly influences performance, sometimes leading to non-monotonically decreasing error curves with notable variance. In contrast, as shown in Figure \ref{fig:unstable_dpi}, DPI consistently exhibits monotonically converging error curves across a wider range of values for \(\lambda\), highlighting its robustness during training.

\begin{figure}[!htb]
    \centering
    \begin{subfigure}[b]{0.49\textwidth}
        \centering
\includegraphics[width=\textwidth]{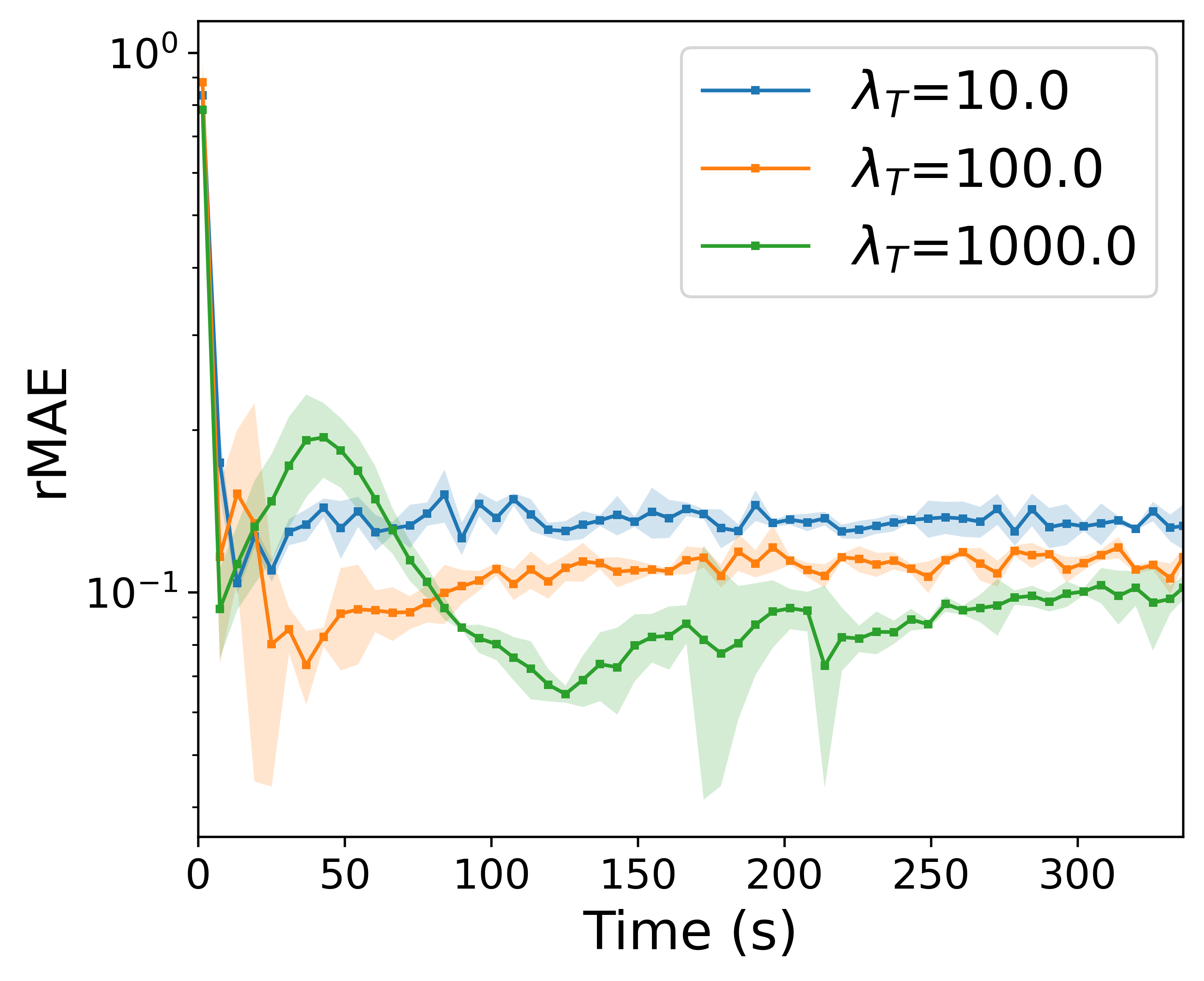}
    \end{subfigure}
    \hfill
    \begin{subfigure}[b]{0.49\textwidth}
        \centering
        \includegraphics[width=\textwidth]{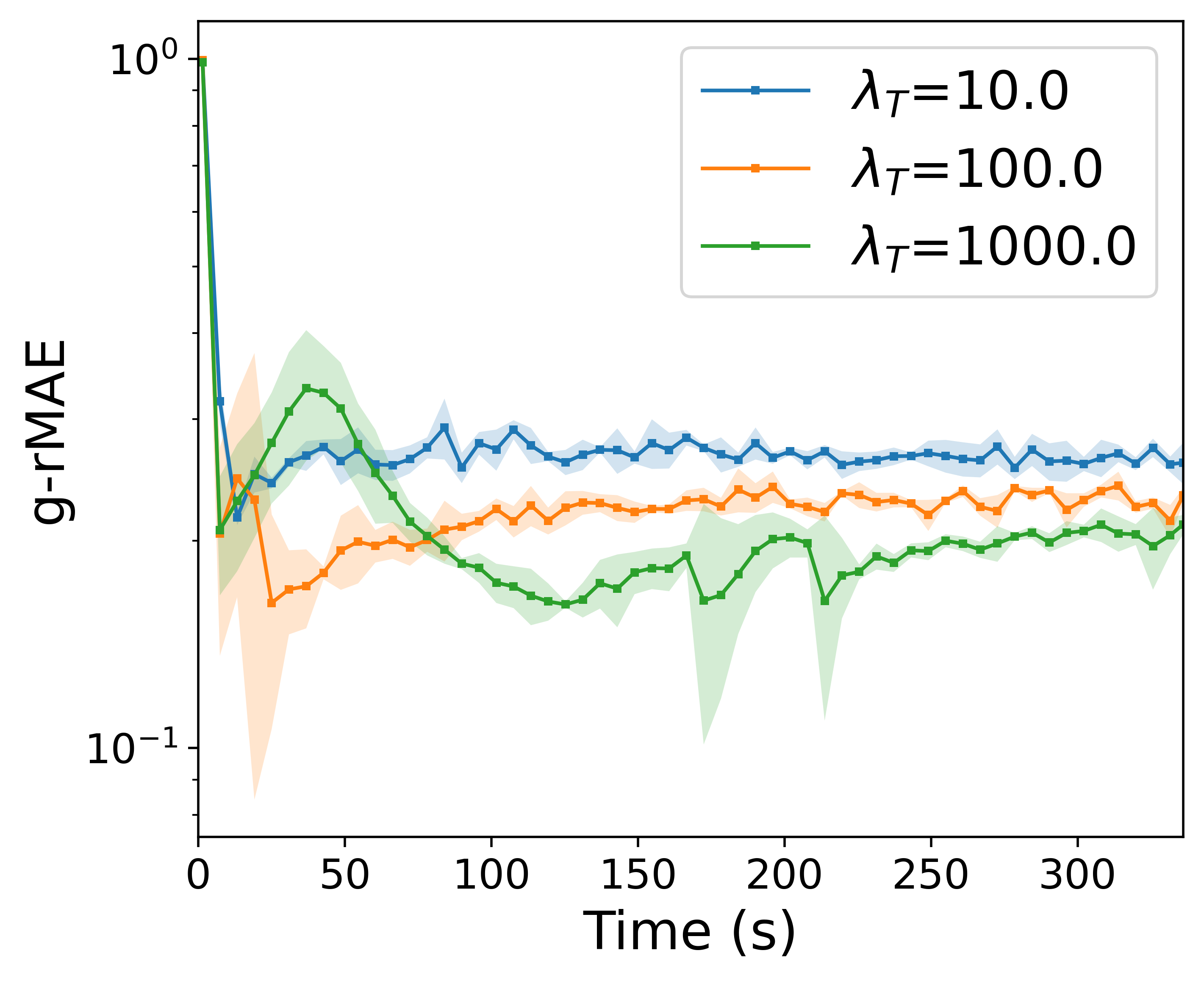}
    \end{subfigure}
\caption{The relative error curves for \( u \) and \( \nabla u \) during training of PINN-HTE with varying terminal weight $\lambda_T$ for Burgers-type PDE with $\kappa=5.0$ in Section \ref{sec:cha}.}
    \label{fig:unstable_pinn}
\end{figure}

\begin{figure}[!htb]
    \centering
    \begin{subfigure}[b]{0.49\textwidth}
        \centering
\includegraphics[width=\textwidth]{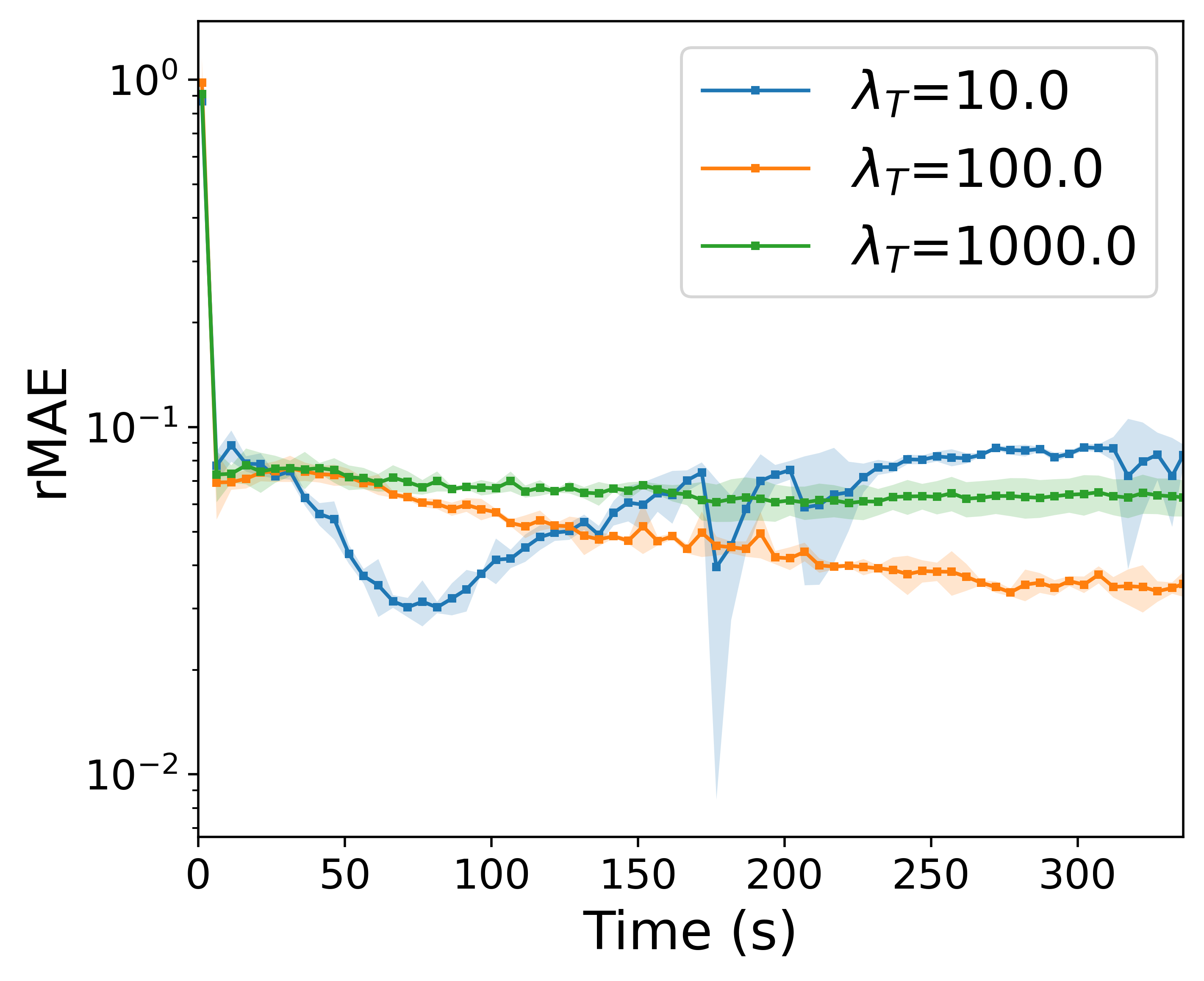}
    \end{subfigure}
    \hfill
    \begin{subfigure}[b]{0.49\textwidth}
        \centering
        \includegraphics[width=\textwidth]{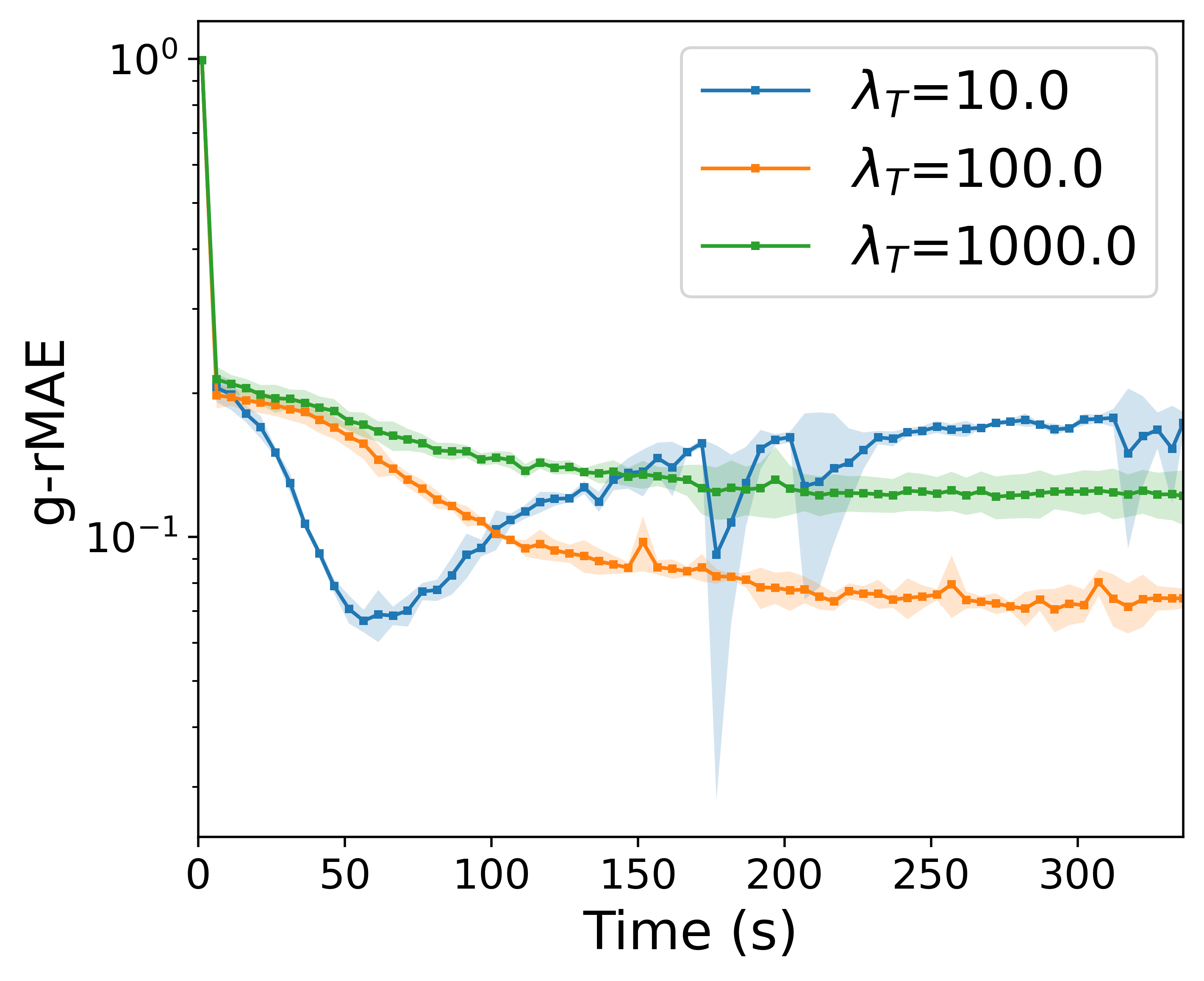}
    \end{subfigure}
\caption{The relative error curves for \( u \) and \( \nabla u \) during training of D-DBSDE with varying terminal weight $\lambda_T$ for Burgers-type PDE with $\kappa=5.0$ in Section \ref{sec:cha}.}
    \label{fig:unstable_dbsde}
\end{figure}

\begin{figure}[!htb]
    \centering
    \begin{subfigure}[b]{0.49\textwidth}
        \centering
\includegraphics[width=\textwidth]{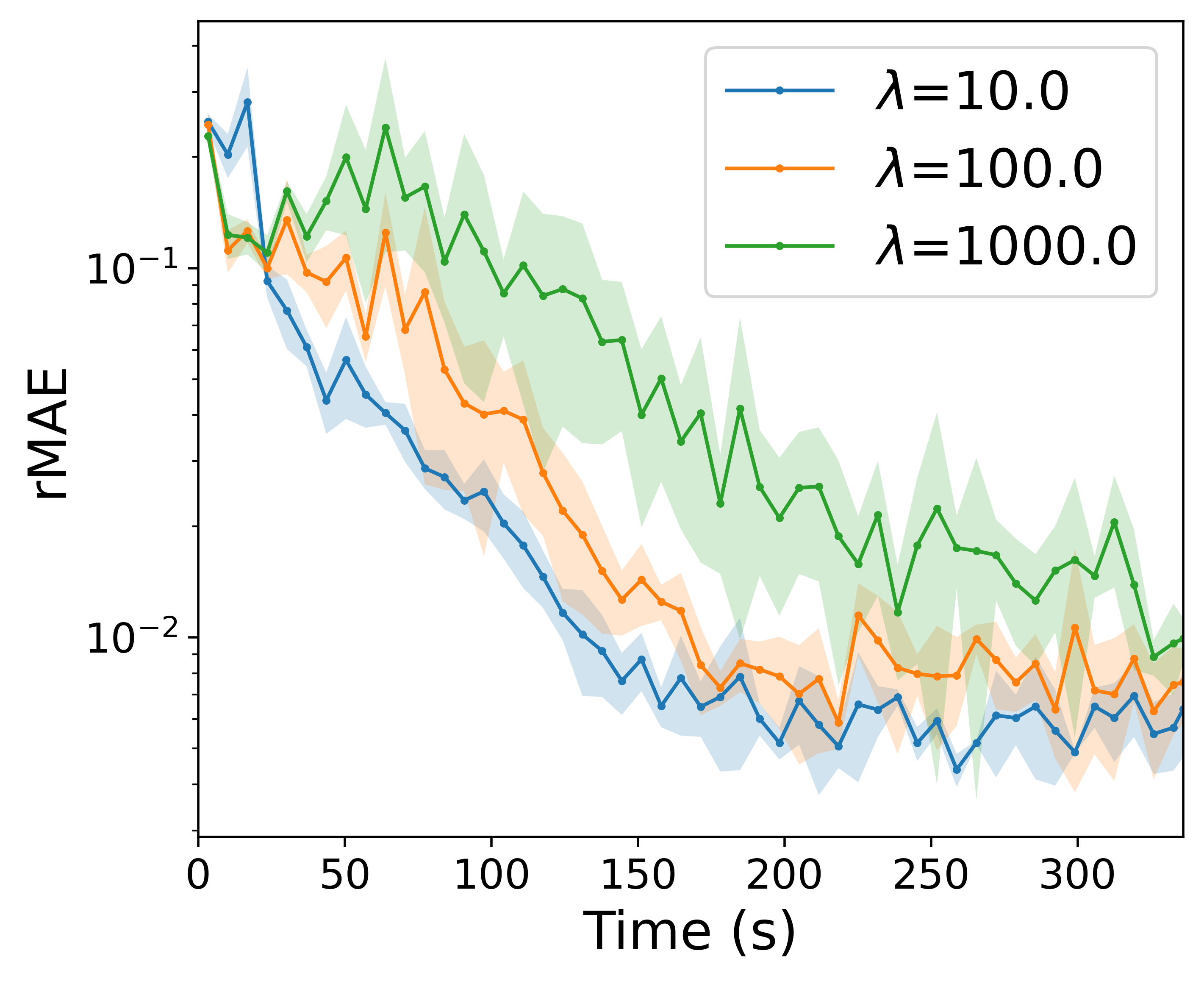}
    \end{subfigure}
    \hfill
    \begin{subfigure}[b]{0.49\textwidth}
        \centering
        \includegraphics[width=\textwidth]{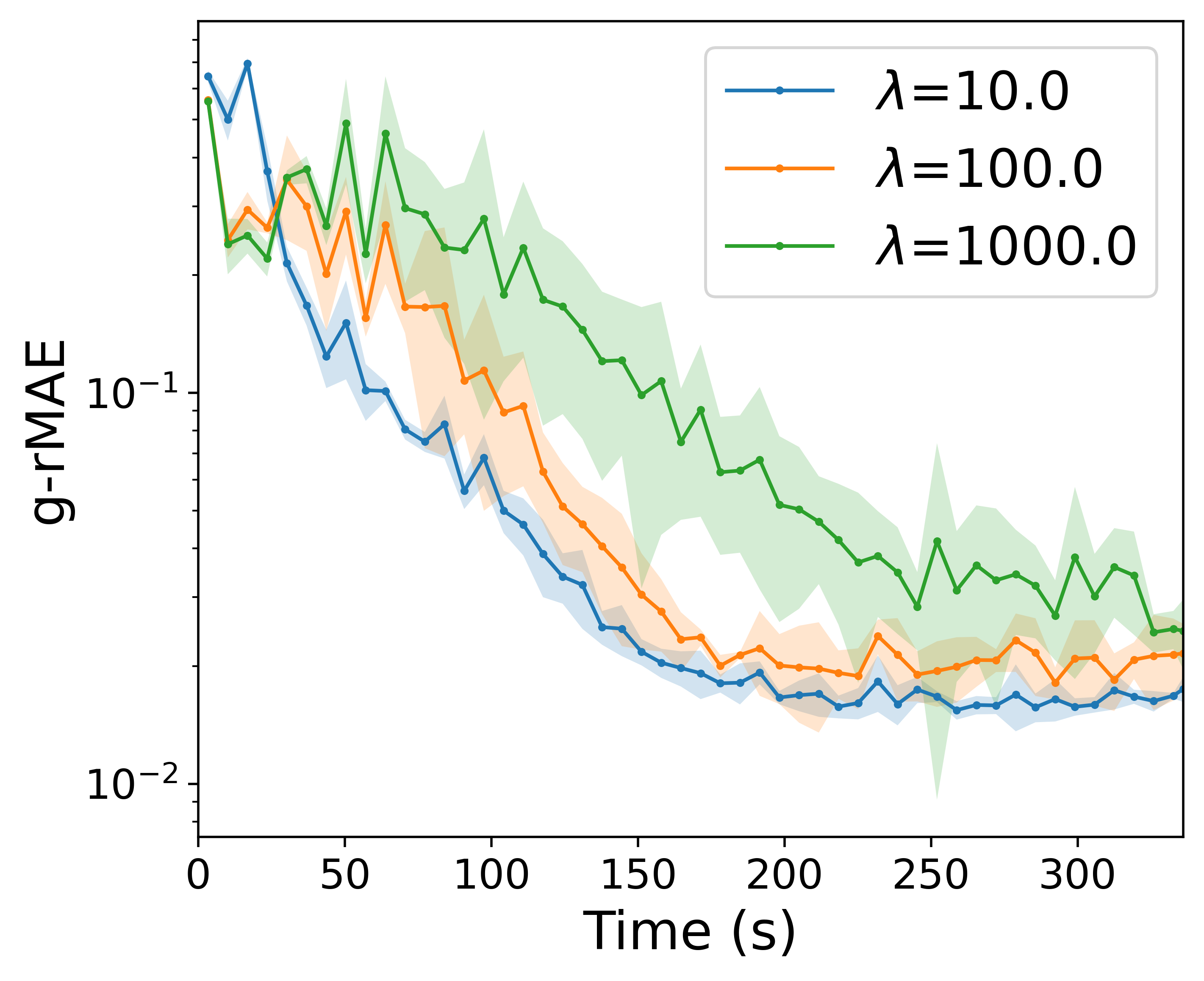}
    \end{subfigure}
\caption{The relative error curves for \( u \) and \( \nabla u \) during training of DPI with varying weight $\lambda$ for Burgers-type PDE with $\kappa=5.0$ in Section \ref{sec:cha}.}
    \label{fig:unstable_dpi}
\end{figure}

\end{document}